\numberwithin{equation}{section}
\theoremstyle{plain}
\newtheorem{theorem}{Theorem}[section]
\newtheorem{prop}[theorem]{Proposition}
\newtheorem{lemma}[theorem]{Lemma}
\newtheorem{lem}[theorem]{Lemma}
\newtheorem{cor}[theorem]{Corollary}
\theoremstyle{definition}
\newtheorem{dfn}[theorem]{Definition}
\newtheorem{example}[theorem]{Example}
\newtheorem{rem}[theorem]{Remark}
\newcommand{\ahha}{{\scriptscriptstyle{A}}}
\newcommand{\uhhu}{{\scriptscriptstyle{U}}}
 \newcommand{\N}{{\mathbb{N}}}
 \newcommand{\C}{{\mathbb{C}}}
 \newcommand{\R}{{\mathbb{R}}}
 \newcommand{\Q}{{\mathbb{Q}}}
\newcommand{\ga}{\alpha} 
\newcommand{\gG}{\Gamma}
\newcommand{\gd}{\delta} 
\newcommand{\gD}{\Delta} 
\newcommand{\gve}{\varepsilon} 
\newcommand{\gvf}{\varphi}  
\newcommand{\gl}{\lambda} 
\newcommand{\gL}{\Lambda}
\newcommand{\go}{\omega}
\newcommand{\gvr}{\varrho} 
\newcommand{\gs}{\sigma} 
\newcommand{\gS}{\Sigma}
\newcommand{\gvt}{\vartheta} 
\newcommand{\cC}{{\mathcal C}}
\newcommand{\cL}{{\mathcal L}}
\newcommand{\cM}{{\mathcal M}}
\newcommand{\cN}{{\mathcal N}}
\newcommand{\cO}{{\mathcal O}}
\newcommand{\cP}{{\mathcal P}}
\newcommand{\cS}{{\mathcal S}}
\newcommand{\cT}{{\mathcal T}}
\newcommand{\cX}{{\mathcal X}}
\newcommand{\Alt}{\operatorname{Alt}}
\newcommand{\Hom}{\operatorname{Hom}}
\newcommand{\Der}{\operatorname{Der}}
\newcommand{\Tor}{{\rm Tor}}
\newcommand{\Ext}{{\rm Ext}}
\newcommand{\Cohom}{\operatorname{Cohom}}
\newcommand{\Cotor}{\operatorname{Cotor}}
\newcommand{\Coext}{\operatorname{Coext}}
\newcommand{\Cob}{\operatorname{Cob}}
\newcommand{\id}{{\rm id}}
\newcommand{\pr}{{\rm pr} \,}
\newcommand{\due}[3]{{}_{{#2 }} {#1}_{{ #3}}\,}    % Zweifachindex
\newcommand{\pl}{\partial}
\newcommand{{\Hl}}{{H^{\ell}}} 
\newcommand{{\mHop}}{{m_{H^{\rm op}}}} 
\newcommand{{\Hop}}{{H^{\rm op}}} 
\newcommand{{\mUop}}{{m_{U^{\rm op}}}} 
\newcommand{{\mUopp}}{{m_{\scriptscriptstyle{U^{\rm op}}}}} 
\newcommand{{\Uop}}{{U^{\rm op}}}
\newcommand{{\mVop}}{{m_{V^{\rm op}}}} 
\newcommand{{\Vop}}{{V^{\rm op}}}  
\newcommand{{\Ae}}{{A^{\rm e}}}
\newcommand{{\Be}}{{B^{\rm e}}}
\newcommand{{\Ue}}{{U^{\rm e}}}
\newcommand{{\He}}{{H^{\rm e}}}
\newcommand{{\Aop}}{{A^{\rm op}}}
\newcommand{{\Aope}}{({A^{\rm op}})^{\rm e}}
\newcommand{{\Aopl}}{{A^{\rm op}_\pl}}
\newcommand{{\Bop}}{{B^{\rm op}}}
\newcommand{{\Bopp}}{{\scriptscriptstyle{{B^{\rm op}}}}}
\newcommand{{\Bope}}{({B^{\rm op}})^{\rm e}}
\newcommand{{\Bpl}}{{B_\pl}}
\newcommand{{\op}}{{{\rm op}}}
\newcommand{{\coop}}{{{\rm coop}}}
\newcommand{{\sop}}{{*^{\rm op}}}
\newcommand{{\co}}{{{\rm co}}}
\newcommand{\kmod}{k\mbox{-}\mathbf{Mod}}                     %
\newcommand{\amoda}{A^{\rm e}\mbox{-}\mathbf{Mod}}                  %
\newcommand{\umod}{U\mbox{-}\mathbf{Mod}}                     %  Modul-Kategorien
\newcommand{\yd}{{}^\uhhu_\uhhu\mathbf{YD}}                     %  Modul-Kategorien
\newcommand{\comodu}{\mathbf{Comod}\mbox{-}U}         
\newcommand{\ucomod}{U\mbox{-}\mathbf{Comod}}         
\newcommand{\contramodu}{\mathbf{Contramod}\mbox{-}U}
\newcommand{\lact}{\smalltriangleright}                  
\newcommand{\ract}{\smalltriangleleft}
\newcommand{\blact}{\blacktriangleright}  
\newcommand{\bract}{\blacktriangleleft}
\newcommand{{\rra}}{\rightrightarrows}
\newcommand{{\lra}}{\ \longrightarrow \ }
\newcommand{{\lla}}{\ \longleftarrow \ }
\newcommand{{\lma}}{\ \longmapsto \ }
 \def\kasten#1{\mathop{\mkern0.5\thinmuskip
 \vbox{\hrule
       \hbox{\vrule
             \hskip#1
             \vrule height#1 width 0pt
             \vrule}%
       \hrule}%
 \mkern0.5\thinmuskip}}
\newcommand{\bx}{{\kasten{6pt}}}
\newcommand{{\bull}}{{\scriptscriptstyle{\bullet}}}
\newcommand{{\qqquad}}{{\quad\quad\quad}}
\newcommand{\Aopp}{{\scriptscriptstyle{\Aop}}}
\newsavebox{\foobox}
\newcommand{\pmact}{\mbox{ \raisebox{-1pt}{\ding{226}} }}
\newcommand{\mpact}{\mbox{ \raisebox{-1pt}{\ding{227}} }}
\newcommand{\sma}[1]{\raisebox{1pt}{${\scriptstyle ({#1})}$}}
\newcommand{\smap}{\raisebox{1pt}{${\scriptstyle (-)_{[+]}}$}}
\newcommand{\smam}{\raisebox{1pt}{${\scriptstyle (-)_{[-]}}$}}
\newcommand{\smadotp}{\raisebox{1pt}{${\scriptstyle (\cdot)_{[+]}}$}}
 \newcommand{\smadotm}{\raisebox{1pt}{${\scriptstyle (\cdot)_{[-]}}$}}
\newcommand{\mancino}{{\,\scalebox{0.7}{\rotatebox{90}{\mancone}}\,}}
\newcommand{\coc}{C_{\rm co}}
\begin{document}

\title{A noncommutative calculus on the cyclic dual of $\Ext$} 

\author{Niels Kowalzig}

\begin{abstract}
We show that if the cochain complex computing $\Ext$ groups (in the category of modules over Hopf algebroids) admits a cocyclic structure,
then the noncommutative Cartan calculus structure on $\Tor$ over $\Ext$ dualises in a cyclic sense to a calculus on $\Coext$ over $\Cotor$. More precisely, the cyclic duals of the chain resp.\ cochain spaces computing the two classical derived functors lead
to complexes that compute the more exotic ones, giving a cyclic opposite module over an operad with multiplication that induce operations such as a Lie derivative, a cap product (or contraction), and a (cyclic) differential, along with higher homotopy operators defining a noncommutative Cartan
calculus up to homotopy.
In particular, this allows to recover the classical Cartan calculus from differential geometry or the Chevalley-Eilenberg calculus for Lie(-Rinehart) algebras without any finiteness conditions or the use of topological tensor products.
\end{abstract}

\address{Dipartimento di Matematica, Universit\`a di Napoli Federico II, Via Cintia, 80126 Napoli, Italy}

\email{niels.kowalzig@unina.it}

\keywords{Noncommutative calculi, cyclic homology, Hopf algebroids, operads, contramodules, Lie-Rinehart algebras}

\subjclass[2010]{
{16E40, 18D50, 19D55, 16T05, 18G60, 53D17, 18H25.}
}

\maketitle

\setcounter{tocdepth}{1}
\tableofcontents

\section*{Introduction}

\subsection{Aims and objectives}

Higher structures on cohomology or homology,
such as brackets, products, and differentials, 
are typically only part of a richer structure on pairs of cohomology and homology groups, where one acts on the other in various ways, as a graded module or graded Lie algebra module, for example. Usually, these operations can already be observed on a cochain resp.\ chain level, often encoded in the action of an operad on a module or opposite module, 
fulfilling certain axioms only up to homotopy and accordingly involving more or less explicit higher homotopy operators as well. The probably most basic example here is given by the pair of multivector fields and differential forms, seen as cohomology and homology groups with zero differentials, where the former acts on the latter by contraction and Lie derivative, and both are equipped with differentials that give, depending on the precise context, rise to de Rham or Lie algebra cohomology resp.\ homology. Algebraically, this idea was formalised in \cite{GelDalTsy:OAVONCDG, NesTsy:OTCROAA, TamTsy:NCDCHBVAAFC, Tsy:CH} by the notion of {\em noncommutative differential calculus}, which also runs under the name {\em Batalin-Vilkoviski\u\i\ (BV) module},
and has been an active research topic since \cite{DolTamTsy:FTFHCATA, DolTamTsy:NCATGMC, Lam:VDBDABVSOCYA, Tsy:NCAO, ArmKel:DIOTTTCOAA, Her:HCOKDP, Tam:TTTCOAAALS},  finding its possibly highest degree of abstraction so far in the definition of the {\em Kontsevich-Soibelman} operad (as introduced in \cite{KonSoi:DOAOOATDC, KonSoi:NOAA}, see also \cite[\S4]{DolTamTsy:FOTHCAOHC}) that essentially encodes calculi.
Later work, for example in \cite{KowKra:BVSOEAT}, resulted in a homotopy calculus structure on the cochain and chain complexes that compute $\Ext$ groups and $\Tor$ groups over quite general rings, more precisely over so-called Hopf algebroids, which enlarged the Hochschild case from \cite{NesTsy:OTCROAA} and also allowed for more general coefficients, from which one can deduce, as an example, that the Hochschild cohomology of twisted Calabi-Yau algebras forms a Batalin-Vilkoviski\u\i\ (BV) {\em algebra}. The approach in \cite{KowKra:BVSOEAT} was formalised in an operadic language in \cite{Kow:GABVSOMOO} by determining the minimal ingredients required in order to obtain a (homotopy) noncommutative calculus.

The main objective in the article at hand is to investigate what happens to a (homotopy) noncommutative calculus when applying to it what is called {\em cyclic duality}, which transforms cocyclic objects in cyclic ones and vice versa, see \cite{Con:CCEFE} and \S\ref{boing}. More precisely, by using the operadic formalism developed in \cite{Kow:GABVSOMOO} and the cyclic structure on the cochain complex computing $\Ext$ groups obtained in \cite{Kow:WEIABVA}, we use cyclic duality both on the cochain resp.\ chain complexes that eventually yield the noncommutative calculus on $\Tor$ over $\Ext$ in \cite{KowKra:BVSOEAT} to a obtain a homotopy noncommutative calculus on the chain resp.\ cochain complexes that leads to a natural calculus of $\Coext$ over $\Cotor$ when descending to (co)homology. This approach turns out to be versatile enough to include the classical Cartan calculus in differential geometry as an example.

The pattern behind our construction is quite striking: starting from a cyclic unital opposite module $\cM$ over an operad $\cO$ with multiplication (the chain space that computes $\Tor$ over the cochain space that computes $\Ext$), one obtains a noncommutative calculus on $(H^\bullet(\cO), H_\bullet(\cM))$. Adding the assumption that the operad $\cO$ is cyclic, one can pass to the cyclic duals both for $\cO^\bullet$ and $\cM_\bullet$ with the result that now r\^oles are exchanged and $\cO_\bullet$ is a cyclic unital opposite module over $\cM^\bullet$  (the chain space that computes $\Coext$ over the cochain space that computes $\Cotor$), which means that now $(H^\bullet(\cM), H_\bullet(\cO))$ yields a noncommutative calculus. As a side remark, both $H^\bullet(\cO)$ and $H^\bullet(\cM)$ even become
Batalin-Vilkoviski\u\i\
  {\em algebras} here, that is, a Gerstenhaber algebra whose bracket measures the failure of the cyclic differential to be a (graded) derivation of the cup product. 
%% $$
%% \{\gvf, \psi\} = - (-1)^\gvf B \gvf \smallsmile \psi - \gvf \smallsmile B \psi + (-1)^\phi B(\gvf \smallsmile \psi).
%% $$.
   We wonder whether one could observe this sort of dual behaviour on a much more general level only involving, say, two cyclic operads with a mutual action, but were at present not able to make this idea more precise.

   \subsection{Main results}

   In \S\ref{atacvantaggi}, we improve earlier work \cite[Prop.~4.8]{Kow:WEIABVA} by observing that even in the non-finite case the category $ {}_\uhhu \mathbf{aYD}^{\scriptscriptstyle{\rm contra-}\uhhu}$ of anti Yetter-Drinfel'd (aYD) contramodules over a left bialgebroid $(U,A)$, while not being monoidal, is a module category over $\yd$, the monoidal category of Yetter-Drinfel'd (YD) modules over $U$, which relies essentially on the fact that this is already the case for right $U$-contramodules over the monoidal category of left $U$-comodules. Expressed in simpler terms, in Proposition \ref{fliegenervt} we prove that if $M \in  {}_\uhhu \mathbf{aYD}^{\scriptscriptstyle{\rm contra-}\uhhu}$ and $N \in \yd$, then $\Hom_\Aop(M,N)$ is an aYD contramodule over $U$ again. This observation allows to generalise \cite[Cor.~4.13]{Kow:WEIABVA} to more general coefficients in Proposition \ref{arteepolitica}, see the main text for all details:

   \begin{prop}
     If $M$ is an aYD contramodule and $N$ a YD module over a left bialgebroid $(U,A)$ such that $\Hom_\Aopp(N,M)$ is stable, then (when $U_\ract$ is $A$-flat) the cochain complex computing $\Ext^\bullet_U(N,M)$ is a cyclic $k$-module.
  \end{prop}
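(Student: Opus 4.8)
The plan is to exhibit the cochain complex computing $\Ext^\bullet_U(N,M)$ as the Hopf-cochain complex of an aYD contramodule coefficient and then feed it through the standard cyclic machinery, the two genuinely new inputs being Proposition \ref{fliegenervt} for the coefficient object and the stability hypothesis for the last cyclic relation. First I would make the complex explicit. Because $U_\ract$ is $A$-flat, the relative bar resolution of $N$ by the relatively free modules $U \otimes_A U^{\otimes_A \bullet} \otimes_A N$ is a genuine resolution computing $\Ext^\bullet_U(N,M)$, and the tensor-hom adjunction $\Hom_U(U \otimes_A X, M) \cong \Hom_\Aopp(X, M)$ identifies $\Hom_U(B_\bullet(N), M)$ termwise with $C^n = \Hom_\Aopp(U^{\otimes_A n}, W)$, where the coefficient object is $W := \Hom_\Aopp(N,M)$.

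Second, I would establish the structure carried by $W$. By Proposition \ref{fliegenervt} (applied to the pairing of the YD module $N$ with the aYD contramodule $M$), the space $W = \Hom_\Aopp(N,M)$ is again an aYD contramodule over $(U,A)$: the coaction hidden in the YD module $N$ together with the contraaction of $M$ produces a contraaction on $W$, while the module structures combine to a $U$-action, the aYD compatibility being exactly what that proposition supplies. The module-category structure of aYD contramodules over $\yd$ recorded just above is what makes these operations interact correctly with $\otimes_A$, so that all the structure maps below descend to the $A$-balanced tensor products.

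Third comes the heart of the argument. On $C^\bullet = \Hom_\Aopp(U^{\otimes_A \bullet}, W)$ I would take the usual Hopf-cyclic structure maps — faces built from the multiplication of $U$ and the outer $U$-action and contraaction of $W$, degeneracies from the counit, and a cyclic operator $\tau$ assembled from the comultiplication of $U$ together with the contraaction of $W$ — and verify that each is well defined over $\otimes_A$ and that together they satisfy all relations of a cyclic module except possibly $\tau_n^{n+1} = \id$. This is precisely where the full aYD identity for $W$, rather than a mere YD or comodule condition, is needed, and where the $A$-flatness of $U_\ract$ is used to identify the relevant $\Hom$ and $\otimes_A$ spaces. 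The bookkeeping of the several left and right $A$-actions is what I expect to be the main obstacle; the relations themselves are formal once the aYD compatibility is installed.

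Finally, I would invoke stability to close the gap. The hypothesis that $\Hom_\Aopp(N,M)$ is stable is exactly the statement that the composite of its contraaction with the iterated action coming from the aYD structure equals the identity, which translates directly into $\tau_n^{n+1} = \id_{C^n}$. Combined with the relations of the previous step, this promotes the paracyclic datum to a genuine cyclic $k$-module, proving the proposition; this concluding step is purely formal once stability is in hand.
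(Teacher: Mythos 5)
Your skeleton coincides with the paper's: both arguments run through Proposition \ref{fliegenervt} to make $W := \Hom_\Aopp(N,M)$ an aYD contramodule, then use the (para-)cocyclic structure \eqref{nadennwommama} with coefficients in $W$, with stability of $W$ supplying $\tau^{n+1} = \id$. The gap is in your first and third steps, i.e., in the claim that the tensor-hom adjunction identifies the dual of the bar resolution of $N$ with $\Hom_\Aopp(U^{\otimes \bullet}, W)$ \emph{as complexes}, so that one may simply ``take the usual Hopf-cyclic structure maps'' there. The adjunction does give an identification of graded $k$-modules, but it does not intertwine the differentials. Under your adjunction, the zeroth bar coface transports to $f \mapsto \big\{n \mapsto u^1\big(f(u^2,\ldots)(n)\big)\big\}$, which acts on $W$ only through the $U$-action on $M$, and the last bar coface transports to precomposition with the $U$-action on $N$. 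In the Hopf-cocyclic complex \eqref{nadennwommama} with coefficients in $W$, by contrast, the zeroth coface is $u^1 \pmact f(u^2,\ldots)$, which by \eqref{pmaction} involves the translation maps $u_+ \otimes_\Aopp u_-$, and the last coface uses the counit $\gve$. These are two genuinely different cosimplicial structures on the same graded $k$-module: the one you transport from the bar resolution computes $\Ext^\bullet_U(N,M)$, while the one carrying the cyclic operator is, on the nose, the complex computing $\Ext^\bullet_U(A,\Hom_\Aopp(N,M))$.

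Reconciling the two is exactly the nontrivial content of the paper's proof, and it is what the paper means when it stresses that the relevant identification ``is not a simple adjunction.'' The paper constructs an explicit $k$-module isomorphism $\chi$ --- coproducts spread over all tensor factors composed with the action on $N$ in one direction, translation maps $u^i_+ \otimes u^i_-$ in the other (so the left Hopf structure is indispensable here, not only for $\pmact$) --- and verifies by computation that conjugation by $\chi$ turns the cofaces of \eqref{nadennwommama} into the standard bar-type cofaces $\gd'_i$, whence the cyclic structure really does live on a cochain complex computing $\Ext^\bullet_U(N,M)$. Without $\chi$, or some chain-level derived adjunction argument showing that $\Ext^\bullet_U(A,\Hom_\Aopp(N,M)) \simeq \Ext^\bullet_U(N,M)$ compatibly with the chosen complexes, your steps produce a perfectly good cyclic $k$-module --- but not, as far as the argument shows, on the complex the proposition is about. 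The remaining ingredients of your proposal (Proposition \ref{fliegenervt}, the para-cocyclic relations for aYD contramodule coefficients, stability giving $\tau^{n+1}=\id$) are used in the paper in exactly the way you describe.
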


In a standard way, as briefly explained in Eq.~\eqref{e-mantra}, this yields a degree $-1$ differential on the cochain complex that induces a differential
$
B: \Ext^\bullet_U(N,M) \to \Ext_U^{\bullet-1}(N,M)
$
on cohomology.

One of the main feature of Connes' cyclic category is its self-duality, as mentioned in \S\ref{boing}. This allows to construct, as in Eq.~\eqref{cyclicdual}, from any cocyclic $k$-module a cyclic $k$-module essentially by treating cofaces as degeneracies and codegeneracies as faces, except for one of them the definition of which involves the cocyclic operator (they are infinitely many ways for such a procedure due to the infinite number of autoequivalences of the cyclic category). While it is known that in case the Hochschild cochain complex is cyclic (which, as a side remark, is usually not the case) the result is trivial, in general for Hopf algebroids the situation is richer.
In Lemma \ref{lakritz} and Theorem \ref{salveregina}, we show (under the standing assumption that $U_\ract$ is $A$-flat and $M$ is $A$-injective):

\begin{theorem}
If $(U,A)$ is both a left and a right Hopf algebroid and $M$ a stable aYD contramodule over $U$, then the cyclic dual of the cochain complex that computes $\Ext_U^\bullet(A,M)$ yields a chain complex computing $\Coext_\bullet^U(A,M)$ if $M$ is injective as an $A$-module, along with a degree $+1$ differential $B: \Coext_\bullet^U(A,M) \to \Coext_{\bullet+1}^U(A,M)$. 
  \end{theorem}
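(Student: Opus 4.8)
The plan is to proceed in three stages — produce the cocyclic structure on the $\Ext$-side, transport it through cyclic duality, and recognise the result as a complex computing $\Coext$ — and then read off the differential $B$. For the first stage I would invoke Proposition \ref{arteepolitica} with $N = A$: since $\Hom_\Aopp(A,M) \cong M$ as $U$-contramodules (by evaluation at $1 \in A$), the stability hypothesis there reduces to stability of $M$, which is assumed, and together with the standing flatness of $U_\ract$ this equips the cochain complex $C^\bullet$ computing $\Ext_U^\bullet(A,M)$ — whose terms arise as $\Hom_A(U^{\otimes_A \bullet}, M)$ from the reduced bar resolution — with the structure of a cocyclic $k$-module, with cofaces $\delta^i$, codegeneracies $\sigma^i$, and cocyclic operator $\tau$.

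Second I would apply the cyclic duality functor of \S\ref{boing}, Eq.~\eqref{cyclicdual}, to $C^\bullet$. Exploiting the self-duality $\Delta C \cong \Delta C^{\mathrm{op}}$ of Connes' category, this formal operation produces a cyclic $k$-module $C_\bullet^\natural$ with the same underlying spaces $C_n^\natural = C^n$, whose faces $d_i$ come from the codegeneracies $\sigma^i$ (with the single exceptional face $d_0$ involving $\tau$), whose degeneracies $s_i$ come from the cofaces $\delta^i$, and whose cyclic operator is essentially $\tau^{-1}$. No homological content is lost at this step; the only subtlety is index bookkeeping and the explicit form of the exceptional face.

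The heart of the argument, and the step I expect to be the main obstacle, is the third: identifying the associated chain complex of $C_\bullet^\natural$ with one computing $\Coext_\bullet^U(A,M)$. Here the \emph{right} Hopf algebroid structure enters decisively — its translation map lets one rewrite the dualised faces $d_i$, a priori assembled from the codegeneracy data of the $\Ext$-complex, as the differential of the cobar-type complex defining $\Coext$; the \emph{left} Hopf structure had already supplied the invertible cocyclic operator $\tau$ on the $\Ext$-side, so both structures are used in tandem. The $A$-injectivity of $M$ is needed precisely at this point: it makes $\Hom_A(-,M)$ exact, ensuring that $\Hom_A(U^{\otimes_A \bullet}, M)$ genuinely computes the derived functor $\Coext_\bullet^U(A,M)$ and is not merely a complex with the correct terms. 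I would isolate the explicit matching of the dualised simplicial operators with the contra-action and left comodule structure defining $\Coext$ as a separate computational lemma (the content of Lemma \ref{lakritz}).

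Finally, the degree $+1$ differential $B$ requires no further input: Connes' boundary $B = (1 - t)\,s\,N$ is canonically attached to any cyclic $k$-module and raises homological degree by one, so it descends to $B: \Coext_\bullet^U(A,M) \to \Coext_{\bullet+1}^U(A,M)$ exactly as in Eq.~\eqref{e-mantra}, with the degree convention flipped relative to the degree $-1$ differential on $\Ext^\bullet$ because cyclic duality interchanges the cohomological and homological gradings.
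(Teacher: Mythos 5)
Your outline follows the paper's own strategy (cocyclic structure on the $\Ext$-complex, cyclic duality, identification with a cobar-type complex, canonical $B$), but it contains one concrete error, and it sits exactly where the paper locates the main difficulty. The cochain complex computing $\Ext_U^\bullet(A,M)$ is $C^\bullet(U,M)=\Hom_\Aopp(U^{\otimes_\Aopp\bullet},M)$ --- right $A$-linear maps on tensor powers over $\Aop$, coming from the \emph{bar} resolution --- and not $\Hom_A(U^{\otimes_A\bullet},M)$, which is the chain space $C_\bullet(U,M)$ on the cobar side. Consequently, applying the cyclic-duality recipe \eqref{cyclicdual} alone leaves you on the bar-type spaces, which as $k$-modules are genuinely different from $D_\bullet(U,M)=\Hom_\Aopp(U^{\otimes_A\bullet},M)$; the paper warns against precisely this at the start of the section. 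So your stage-2 claim that ``no homological content is lost \ldots\ the only subtlety is index bookkeeping'' is not tenable: the passage requires the higher-order Hopf-Galois isomorphism $\xi$ of Eq.~\eqref{dies} (built from both translation maps), which is intertwined with cyclic duality in Lemma \ref{lakritz} to produce the cyclic structure on $C_\bullet(U,M)$, and then a \emph{second} isomorphism $\eta=\zeta\circ t$ with $\zeta$ as in Eq.~\eqref{tat}, whose intertwining property $\eta\circ d_i=b_i\circ\eta$ is what identifies the dualised complex with $(D_\bullet(U,M),b)$. Verifying that property uses the aYD compatibility \eqref{nawas1} and stability \eqref{stablehalt} in an essential computational way, not merely the invertibility of $\tau$.

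That said, the rest of your plan matches the paper: stage 1 (Proposition \ref{arteepolitica} with $N=A$, stability of $\Hom_\Aopp(A,M)\simeq M$ reducing to stability of $M$) is how the cocyclic module \eqref{nadennwommama} arises; your stage 3 correctly anticipates both that the translation maps must convert the dualised faces into the cobar differential and that $A$-injectivity is what makes $D_\bullet(U,M)$ genuinely compute $\Coext^U_\bullet(A,M)$ (a fact established in \S\ref{coext}, not re-proved in the theorem); and stage 4, with $B$ from Eq.~\eqref{e-mantra}, is exactly the paper's conclusion. The plan is therefore repairable: correct the underlying spaces of the $\Ext$-complex, and fold \emph{both} isomorphisms $\xi$ and $\eta$ into the ``separate computational lemma'' you propose --- that lemma is then the combination of Lemma \ref{lakritz} and the computation in the proof of Theorem \ref{salveregina}.
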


More precisely, if $\gamma$ denotes the right $U$-contraaction on $M$ and if we indicate by $u_{[+]} \otimes_A u_{[-]}$ for $u \in U$ a Sweedler-type notation for the inverse of one of the canonical Hopf-Galois maps, that is, the right Hopf structure, we obtain on the chain spaces $C_\bullet(U,M) := \Hom_A(U^{\otimes_A \bullet}, M)$ the following structure maps of a cyclic $k$-module:
\begin{eqnarray*}
%\begin{array}{rcl}
(d_i f)(u^1|\ldots|u^{n-1}) \!\!\!\!&=\!\!\!\!& \left\{\!\!\!
\begin{array}{l} 
  \gamma\big(\smap f(\smam |u^1 |\ldots |u^{n-1})\big)
\\ 
 f(u^1 |\ldots |\gD u^i |\ldots | u^{n-1})
\\
f(u^1|\ldots|u^{n-1} |1)
\end{array}\right.  
  \begin{array}{l} \mbox{if} \ i=0, \\ \mbox{if} \
  1 \leq i \leq n-1, \\ \mbox{if} \ i = n,  \end{array} 
  \\[3pt]
 (s_j f)(u^1 | \ldots | u^{n+1}) \!\!\!\! 
  &=\!\!\!\!& f(u^1 | \ldots | \gve(u^{j+1}) | \ldots | u^{n+1}) \ \quad\qquad\mbox{for} \ \, 0 \leq j \leq n,
%(s_j f)(u^1 \otimes_A \cdots \otimes_A u^{n+1}) \!\!\!\! 
%  &=\!\!\!\!& f(u^1\otimes_A \cdots \otimes_A \gve(u^{j+1}) \otimes_A \cdots \otimes_A u^{n+1}) \ \, % \quad\mbox{for} \ 0 \leq j \leq n,
  \\[3pt]
(t f)(u^1 | \ldots | u^n) \!\!\!\!&=\!\!\!\!& \gamma\Big(((\sma{-} u^1) \mpact
  f)\big(u^2 | \ldots | u^n | 1 \big) \Big),
\end{eqnarray*}
where $\mpact$ denotes the left $U$-action on
$\Hom_A(U^{\otimes_A n}, M)$ as in Eq.~\eqref{mpaction} and the vertical bars denote a certain tensor product over $A$, see Eq.~\eqref{samsung1}. From a broader perspective, this cyclic $k$-module and the corresponding differential $B$ are part of what is called a {\em homotopy noncommutative} or {\em homotopy Cartan calculus}, also known as {\em homotopy BV module}, see
%Appendix
\S\ref{esregnetdurchdieDecke}. Such a differential calculus typically arises from a so-called {\em cyclic opposite module} over an operad with multiplication, as quoted in Theorem \ref{terzamissione}; the operad in question here arises from the complex computing the derived functor $\Cotor^\bullet_U(A,A)$. In this spirit, in Theorem
\ref{plebiscito}, we prove (again with $U_\ract$ flat over $A$ and $M$ injective over $A$):

\begin{theorem}
If $M$ is a stable aYD contramodule over a left bialgebroid $(U,A)$ which is both left and right Hopf, the chain complex computing $\Coext^U_\bullet(A,M)$ can be seen as a cyclic unital opposite module over the cochain complex computing $\Cotor_U^\bullet(A,A)$, seen as an operad with multiplication, such that the underlying cyclic $k$-module structure is the one listed right above.
 \end{theorem}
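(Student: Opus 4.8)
The plan is to produce the two ingredients required by Theorem~\ref{terzamissione} --- an operad with multiplication and a cyclic unital opposite module over it --- and then to read off that the induced cyclic $k$-module is the one displayed before the statement. For the operad I would take $\cO(n)$ to be the degree-$n$ piece of the cochain complex computing $\Cotor_U^\bullet(A,A)$ and equip it with insertion operations $\circ_i$ together with a multiplication $\mu$ and a unit built from the coproduct $\gD$ and the counit $\gve$ of the bialgebroid; concretely this is the cyclic dual, in the sense of \S\ref{boing}, of the cyclic opposite module structure carried by the $\Tor$-side complex over the $\Ext$-operad of \cite{KowKra:BVSOEAT}, the passage to the cyclic dual converting the module operations into operadic composition. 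Dually, on $\cM(n) = C_n(U,M) = \Hom_A(U^{\otimes_A n}, M)$ I would define the action of $\cO(p)$ by dualising the brace operations and cup product of the $\Ext$-operad, the resulting maps being assembled from $\gD$, $\gve$, the contraaction $\gamma$, and the inverse Hopf-Galois map $u \mapsto u_{[+]} \otimes_A u_{[-]}$ supplied by the right Hopf structure.

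With the operations in place, I would verify the axioms of a unital opposite $\cO$-module as quoted in Theorem~\ref{terzamissione} and simultaneously check that the associated face, degeneracy, and cyclic operators coincide with the maps $d_i$, $s_j$, $t$ recorded above. The codegeneracy-to-face and coface-to-degeneracy parts of cyclic duality make the degeneracies $s_j$, the inner faces $d_i$ for $1 \le i \le n-1$, and the last face $d_n$ immediate consequences of the coassociativity and counitality of $\gD$, so that the associativity $\mu \circ_1 \mu = \mu \circ_2 \mu$ and the unitality of $\mu$ on the operad side, as well as the corresponding compatibilities of the $\cO$-action on $\cM$, reduce to routine coalgebra bookkeeping.

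The main obstacle is that cyclic duality singles out one codegeneracy and twists it by the cocyclic operator, so exactly the two maps that carry the contraaction and the Hopf structure --- the face $d_0$, built from $\gamma$ and $\smap,\smam$, and the cyclic operator $t$ --- do not transport formally and must be treated by hand. Here I expect to invoke the full left and right Hopf structure in order to manipulate the inverses of both canonical Galois maps, the anti Yetter-Drinfel'd contramodule axioms relating $\gamma$ to the $U$-action $\mpact$ of Eq.~\eqref{mpaction}, and the stability of $M$ in order to establish the cyclicity $t^{\,n+1} = \id$ on $\cM(n)$ and the compatibility of $t$ with operadic composition; these are precisely the identities for which, as the excerpt itself signals by noting that no clean abstract duality statement is yet available, a genuine computation is forced rather than a formal transport. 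Throughout, the standing hypotheses that $U_\ract$ be $A$-flat and $M$ be $A$-injective guarantee, via Theorem~\ref{salveregina}, that the underlying cyclic $k$-module indeed computes $\Coext_\bullet^U(A,M)$ and that all dualised operations are well defined, so that the verification takes place in the intended homological setting. Once these compatibilities are secured, Theorem~\ref{terzamissione} applies and delivers the asserted cyclic unital opposite module with the prescribed underlying cyclic structure.
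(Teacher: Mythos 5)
Your overall strategy is the paper's own: take $\cO(p)=\coc^p(U,A)$ with partial compositions $\circ_i$ built from the diagonal coproduct action and $(\mu,\mathbb{1},e)=\big((1_U|1_U),1_U,1_A\big)$ as in Eqs.~\eqref{maxdudler2}--\eqref{soschlapp}, equip $\cM(n)=C_n(U,M)$ with operations $\bullet_i$ coming from $\gD$ and an extra operation $\bullet_0$ built from the contraaction $\gamma$ and the right Hopf translation map as in Eqs.~\eqref{viatoledo1} and \eqref{extraop}, verify the cyclic unital opposite module axioms (the substantive one being Eq.~\eqref{lagrandebellezza1}, which you correctly flag as requiring the aYD condition \eqref{nawas1}, stability \eqref{stablehalt}, and the Hopf-Galois identities), and match the induced cyclic $k$-module of Eqs.~\eqref{colleoppio} with Eqs.~\eqref{lakritz1}--\eqref{lakritz3}. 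So far this is exactly the proof of Theorem \ref{plebiscito}.

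There are, however, two concrete flaws. First, you misallocate the hard work in the matching step: you claim the last face $d_n$ transports by ``coassociativity and counitality'' while $d_0$ must be treated by hand. In the induced structure \eqref{colleoppio} one has $d_n=\mu\bullet_0\circ t$, so it is precisely this face that combines the two operations carrying $\gamma$ and the Hopf structure; showing it equals $f\mapsto f(u^1|\ldots|u^{n-1}|1)$ is one of the two page-long computations in the paper's proof (using \eqref{nawas1}, \eqref{stablehalt}, \eqref{Tch2}, \eqref{Tch3}, \eqref{mampf2}, \ldots) and is certainly not routine coalgebra bookkeeping. Conversely, $d_0=\mu\bullet_0$ is immediate, since the extra operation \eqref{extraop} is defined exactly so as to realise the displayed $d_0$. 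Your allocation can be repaired --- Theorem \ref{salveregina} already guarantees that the displayed $(d_\bullet,s_\bullet,t)$ is a genuine cyclic $k$-module, so the relation $d_0\circ t=d_n$ from \eqref{belleville} transports the $d_0$-matching to the $d_n$-matching formally --- but you never make that argument, and as written this step would fail. Second, your closing sentence inverts Theorem \ref{terzamissione}: that theorem \emph{consumes} a cyclic unital opposite module and \emph{produces} the homotopy calculus (that is Corollary \ref{tarrega}); it cannot ``deliver the asserted cyclic unital opposite module'', which is exactly what the present theorem asserts and what your preceding verifications must establish unaided.
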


This, as already mentioned, has Corollary \ref{tarrega}
as an immediate consequence:

 \begin{cor}
   The couple consisting of the cochain complex computing $\Cotor_U^\bullet(A,A)$ and the chain complex computing  $\Coext^U_\bullet(A,M)$ can be equipped with the structure of a homotopy noncommutative calculus if $M$ is a stable aYD contramodule over $U$.
In particular, this induces the structure of a BV module on
$\Coext^U_\bullet(A, M)$
over
$\Cotor^\bullet_U(A,A)$.
 \end{cor}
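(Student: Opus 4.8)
The plan is to treat Corollary \ref{tarrega} as a direct application of the abstract operadic machinery to the concrete structure already established in Theorem \ref{plebiscito}, so that essentially no new computation is required beyond matching hypotheses. First I would invoke Theorem \ref{plebiscito} to record that, under the standing assumptions ($U_\ract$ flat over $A$ and $M$ injective as an $A$-module), the chain complex $C_\bullet(U,M)$ computing $\Coext^U_\bullet(A,M)$ carries the structure of a cyclic unital opposite module over the cochain complex computing $\Cotor_U^\bullet(A,A)$, the latter viewed as an operad with multiplication. This is exactly the input data required by the general result quoted as Theorem \ref{terzamissione} (following \cite{Kow:GABVSOMOO}), which asserts that any cyclic unital opposite module $\cM_\bullet$ over an operad with multiplication $\cO^\bullet$ gives rise to a homotopy noncommutative calculus on the pair $(\cO^\bullet,\cM_\bullet)$. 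Applying that theorem with $\cO^\bullet$ the complex computing $\Cotor_U^\bullet(A,A)$ and $\cM_\bullet$ the complex computing $\Coext^U_\bullet(A,M)$ then yields the first assertion verbatim.

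Next I would make explicit which operations this produces, since they are precisely the Cartan-type data claimed in the statement. From the operad-with-multiplication structure on $\cO^\bullet$ one reads off the cup product and the Gerstenhaber bracket; the opposite module structure then provides the contraction (cap product) $\iota$ and the Lie derivative $\cL$ of $\cO^\bullet$ acting on $\cM_\bullet$, while the cyclic structure on $\cM_\bullet$---whose face, degeneracy, and cyclic operators are exactly the maps $d_i$, $s_j$, $t$ listed immediately before the statement---supplies the degree $+1$ differential $B$. The Cartan homotopy formula relating $\cL$, $\iota$, and $B$, together with the remaining calculus identities, holds only up to the explicit higher homotopy operators furnished by Theorem \ref{terzamissione}; this is what the word \emph{homotopy} refers to, and it is the reason no finiteness or topological-tensor-product hypotheses are needed.

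For the ``in particular'' clause I would pass to (co)homology. The cohomology $H^\bullet(\cO^\bullet)=\Cotor_U^\bullet(A,A)$ of an operad with multiplication is a Gerstenhaber algebra (in fact a BV algebra, as the cyclic structure measures the failure of $B$ to be a derivation of the cup product), and since the Connes-type differential coming from a cyclic module always satisfies $B^2=0$, the induced operator on $H_\bullet(\cM_\bullet)=\Coext^U_\bullet(A,M)$ is a genuine differential. Upon taking homology the higher homotopy operators become null-homotopic, so the homotopy relations collapse to the strict BV-module axioms; hence $\Coext^U_\bullet(A,M)$ is an honest BV module over the Gerstenhaber algebra $\Cotor_U^\bullet(A,A)$, as claimed.

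The main obstacle here is not a calculation but a matter of precise bookkeeping. One must verify that the output of Theorem \ref{plebiscito} satisfies, on the nose, every structural axiom demanded on the input side of Theorem \ref{terzamissione}---in particular unitality and the compatibility of the cyclic operator $t$ with the opposite-module action---and that the two standing hypotheses ($U_\ract$ being $A$-flat and $M$ being $A$-injective) are exactly what is needed to identify the homology of the cyclic dual complex with $\Coext^U_\bullet(A,M)$ and the cohomology of $\cO^\bullet$ with $\Cotor_U^\bullet(A,A)$, so that the abstract calculus genuinely descends to the named derived functors rather than merely to the homology of some auxiliary complexes.
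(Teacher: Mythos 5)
Your proposal matches the paper's own argument: Corollary \ref{tarrega} is obtained there exactly by combining Theorem \ref{plebiscito} (the cyclic unital opposite module structure of the $\Coext$-complex over the $\Cotor$-complex, viewed as an operad with multiplication) with the general Theorem \ref{terzamissione}, and the BV-module statement follows by descending to (co)homology, where the homotopy terms vanish and the relations \eqref{true} hold strictly. Your bookkeeping remarks about hypotheses ($U_\ract$ flat, $M$ stable aYD and $A$-injective) are precisely the standing assumptions under which the paper identifies the complexes' (co)homology with the named derived functors, so nothing further is needed.
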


 Explicitly,
along with the {\em homotopy} or {\em higher $B$-operators} $\cS$ and $\cT$, see Eqs.~\eqref{Regenregenregen},
the calculus operators of cyclic differential, contraction, and Lie derivative 
read as follows:
\begin{small}
   \begin{eqnarray*}
    (B f)(v^0|\ldots|v^{n}) &\!\!\!\!\!
    =&\!\!\!\!\!
    \textstyle\sum\limits^{n+1}_{i=1} (-1)^{(i-1)n} \gamma\Big(\!\smadotp (v^i \mpact f)\big(\smadotm \mancino (v^{i+1} | \ldots | v^{n+1}) | v^1|\ldots|v^{i-1}\big)\!\Big),                    
  \\
(\iota_w f)(v^1|\ldots|v^{n-p}) &\!\!\!\!\!=&\!\!\!\!\! \gamma\big(\smadotp f(\smadotm \mancino (u^1 | \ldots | u^p) | v^1|\ldots|v^{n-p})\big), 
      %\iota_w f &=& \dot\gamma\big(\smadotp f(\smadotm \mancino (u^1 | \ldots | u^p) | \sma{-}^1 | \ldots | \sma{-}^{n-p+1})\big)                 ,
\\[4pt]
    (\cL_w f)(v^1|\ldots|v^{n-p+1})&\!\!\!\!\!=&\!\!\!\!\!
\\
    &&
    \hspace*{-3.38cm}
     \textstyle\sum\limits^{n-p+1}_{i=1} (-1)^{(p-1)(i-1)} 
 f\big(v^1 | \ldots | v^{i-1} | v^i \mancino (u^1 | \ldots |u^p)|
 v^{i+1} | \ldots | v^{n-p+1}\big)
 \\
    &&
  \hspace*{-3.5cm}
  + \textstyle\sum\limits^{p}_{i=1} (-1)^{n(i-1) + p-1}
   \gamma\Big(\!\smadotp (u^i \mpact f)\big(\smadotm \mancino (u^{i+1} | \ldots | u^{p}) |v^1|\ldots|v^{n-p+1}| u^1|\ldots|u^{i-1}\big)\!\Big), 
  \end{eqnarray*}
 \end{small}
for $w := (u^1| \ldots | u^p) \in U^{\otimes_A p}$ %$(v^1|\ldots|v^k) \in U^{\otimes_A k}$,
and $f \in \cM(n)$, where $\mancino$ denotes the diagonal action in the monoidal category $\umod$ of left $U$-modules.

Our main application of this machinery consists in showing in \S\ref{examples} that
the noncommutative calculus on $\Coext$ and $\Cotor$ provides a natural framework for including the classical Cartan calculus in differential geometry as an example: in Theorem \ref{transactions}, we show:

\begin{theorem}
  Let $(A,L)$ be a Lie-Rinehart algebra, where $L$ is projective over $A$ of possibly infinite dimension, and $V\!L$ its universal enveloping algebra. Then the antisymmetrisation map induces an isomorphism of BV modules (or Cartan calculi) between $\big(\!\textstyle\bigwedge^n_A \!L, \Hom_A(\textstyle\bigwedge^n_A \!L, M)\big)$  and $\big(\!\Cotor^\bullet_{V\!L}(A,A), \Coext^{V\!L}_\bullet(A,M)\big)$, where $M$ is an $A$-injective $V\!L$-module.
  \end{theorem}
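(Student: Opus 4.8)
The plan is to run the general machinery of Theorem~\ref{plebiscito} and Corollary~\ref{tarrega} for the specific left bialgebroid $U = V\!L$ and then to identify the resulting calculus with the classical one by means of antisymmetrisation. First I would record that $V\!L$ is a cocommutative left bialgebroid over $A$ which, precisely because $L$ is projective (hence flat) over $A$, is both a left and a right Hopf algebroid: this is the Lie--Rinehart incarnation of the Poincar\'e--Birkhoff--Witt theorem, and it is exactly the hypothesis needed so that Corollary~\ref{tarrega} equips $\big(\Cotor^\bullet_{V\!L}(A,A), \Coext^{V\!L}_\bullet(A,M)\big)$ with a homotopy noncommutative calculus, where the $A$-injective $V\!L$-module $M$ supplies, via its flat $L$-connection and the cocommutativity of $V\!L$, the structure of a stable aYD contramodule and thereby the contraaction $\gamma$. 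At this level the operad $\cM^\bullet$ is the cobar complex computing $\Cotor$ with underlying spaces $V\!L^{\otimes_A \bullet}$, and the opposite module $\cO_\bullet$ is $\Hom_A(V\!L^{\otimes_A \bullet}, M)$ computing $\Coext$, carrying the operators $B$, $\iota_w$, $\cL_w$ displayed above.

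Next I would introduce the antisymmetrisation map $\Alt_n \colon \textstyle\bigwedge^n_A L \to V\!L^{\otimes_A n}$, $x_1 \wedge \cdots \wedge x_n \mapsto \sum_{\sigma} \sgn(\sigma)\, x_{\sigma(1)} \otimes_A \cdots \otimes_A x_{\sigma(n)}$, and show that it is a chain map which, again by the projectivity of $L$ over $A$ (Rinehart), is a quasi-isomorphism onto the cobar complex; this yields the graded isomorphism $\textstyle\bigwedge^\bullet_A L \cong \Cotor^\bullet_{V\!L}(A,A)$. Dualising and using that $M$ is injective over $A$, so that $\Hom_A(-,M)$ preserves the quasi-isomorphism, the transpose $\Alt_n^* = \Hom_A(\Alt_n, M)$ induces $\Coext^{V\!L}_\bullet(A,M) \cong \Hom_A(\textstyle\bigwedge^\bullet_A L, M)$. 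It is here that the infinite rank of $L$ is harmless: one never dualises $L$ itself nor passes to a completed tensor product, the projective (Chevalley--Eilenberg--Rinehart) resolution doing all the work, which is precisely the advantage of the contramodule and $\Coext$ formalism over the finiteness hypotheses used in the classical treatment.

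It then remains to verify that these two isomorphisms are compatible with the whole calculus, i.e.\ that $\Alt$ transports the cup product and Gerstenhaber bracket on $\Cotor^\bullet_{V\!L}(A,A)$ to the wedge product and the Schouten--Nijenhuis bracket on $\textstyle\bigwedge^\bullet_A L$, and that $\Alt^*$ transports the cyclic differential $B$, the contraction $\iota_w$, and the Lie derivative $\cL_w$ to the Chevalley--Eilenberg (de~Rham) differential, the classical contraction, and the classical Lie derivative on $\Hom_A(\textstyle\bigwedge^\bullet_A L, M)$. The conceptually cleanest route is to check that $\Alt$ and $\Alt^*$ are compatible with the operadic composition and with the cocyclic structure up to the coherent homotopies built into the notion of a homotopy calculus; since strict compatibility at the chain level typically fails (as already for the Hochschild--Kostant--Rosenberg comparison), one expects the induced maps on (co)homology to be the genuine isomorphisms of the Gerstenhaber/BV algebra and of its BV module, whence Cartan's magic formula $\cL_w = [B, \iota_w]$ is inherited from the homotopy BV relations guaranteed by Corollary~\ref{tarrega}.

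The main obstacle is exactly this last compatibility. Whereas the classical Cartan operators are given by short, transparent formulas, the operators $B$, $\iota_w$, $\cL_w$ above are expressed through the contraaction $\gamma$, the inverse Hopf--Galois map $u \mapsto u_{[+]} \otimes_A u_{[-]}$, the left action $\mpact$, and the diagonal action $\mancino$, so matching them demands a careful Poincar\'e--Birkhoff--Witt computation tracking the symmetrisation sums and signs. The decisive checks are that the cobar differential precomposed with $\Alt$ reproduces the Chevalley--Eilenberg differential with the correct anchor and $\gamma$-twist, and that antisymmetrising the brace operations collapses them onto the Schouten--Nijenhuis bracket; the identities for $\iota_w$ and $\cL_w$ should then follow by the same bookkeeping, with the $A$-injectivity of $M$ ensuring that all chain-level identities descend to the asserted isomorphism of BV modules.
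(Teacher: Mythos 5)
Your skeleton coincides with the paper's: specialise the machinery of Theorem \ref{plebiscito}/Corollary \ref{tarrega} to $U=V\!L$, use the HKR map to identify $\Cotor^\bullet_{V\!L}(A,A)$ with $\bigwedge^\bullet_A L$ and its $M$-dual with $\Coext^{V\!L}_\bullet(A,M)$ (this is Lemma \ref{dumdidum}, and your use of $A$-injectivity of $M$ there is correct), and then match the calculus operators. The problem is that the matching step --- which is the actual content of the theorem --- is left in your write-up as a plan, and the plan points away from what makes the proof work. You expect strict chain-level compatibility to fail and propose to verify compatibility ``up to the coherent homotopies built into the notion of a homotopy calculus'', concluding that one \emph{expects} the induced maps on homology to be isomorphisms of BV modules. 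The paper does the opposite, exploiting two concrete devices that make the comparison \emph{strict} at the chain level. First, since $V\!L$ is cocommutative one may take the \emph{trivial} contraaction $f\mapsto f(1)$ (Examples \ref{trivial} and \ref{shaundasschaf2}); this collapses the general $\gamma$-twisted, Hopf--Galois-laden formulas \eqref{psytrance2} to the simple expressions \eqref{psytrance4}--\eqref{psytrance6} of \S\ref{nirvana}, and it is only in this simplified form that the comparison with the classical operators is a tractable symmetrisation/sign computation. Your last paragraph still carries $\gamma$, $(-)_{[\pm]}$, $\mpact$ and $\mancino$ in full generality, so the ``careful PBW computation'' you defer would be attacking a much harder target than necessary. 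Second, to transport $\iota$ and $\cL$ (which, unlike $B$, also take an operad argument) one needs a chain-level map in the opposite direction: the paper uses the explicit left inverse $\cP=\pr\wedge\cdots\wedge\pr$ of $\Alt$, \emph{defines} the operators on forms by conjugation, Eqs.~\eqref{bibo2}--\eqref{bibo3}, and checks by direct computation (Proposition \ref{nyc}) that these are literally the classical contraction and Lie derivative, while \eqref{oskar1} and \eqref{this} show that $\Alt$ and $\cP$ are both strict morphisms of mixed complexes; the Cartan identity $\cL_Y=[d_{\sf dR},\iota_Y]$ is then \emph{transported} from Theorem \ref{terzamissione} rather than re-proved. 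The only statement that needs homology is that $\cP$ becomes a two-sided inverse of $\Alt$, which is all that ``descending to homology'' in Theorem \ref{transactions} means.

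Concretely, then, the gap is this: without the trivial-contraaction simplification and without an explicit chain-level (one-sided) inverse of $\Alt$, your proposal has no mechanism for producing the identities \eqref{oskar1a}--\eqref{oskar3}. ``Compatibility of $\Alt$ with the operadic composition and the cocyclic structure up to coherent homotopy'' is never defined, let alone verified, and nothing in your argument rules out that the operators induced on homology satisfy the homotopy-BV identities while differing from the classical contraction and Lie derivative; identifying them is precisely the computational heart of the proof (the $B$ versus $d_{\sf dR}$ computation and the two computations in Proposition \ref{nyc}). A smaller inaccuracy: the left and right Hopf algebroid structure on $V\!L$ does not come from projectivity of $L$ --- it exists for any Lie--Rinehart algebra, being given on generators by \eqref{regalate}; projectivity (hence flatness) of $L$ is what is needed for the PBW/HKR quasi-isomorphism and the standing flatness hypotheses on $U_\ract$.
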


Here, by isomorphism of BV modules we mean a pair of isomorphisms of the respective underlying $k$-modules that commute with all calculus operators $B, \iota, \cL, \cS, \cT$, and also induce an isomorphism of Gerstenhaber algebras, see Lemma \ref{dumdidum} and Eqs.~\eqref{oskar1a}--\eqref{oskar3} for details.

This, in particular, contains the Chevalley-Eilenberg calculus for Lie algebras and the calculus known for Lie algebroids as vector bundles over smooth manifolds which, in turn, includes the classical Cartan calculus if the vector bundle in question is the tangent bundle.

A related but more restrictive result was already obtained in \cite{KowKra:BVSOEAT}
by developing a calculus on $\Tor$ over $\Ext$. There, however, finiteness of $L$ as an $A$-module was necessary to be assumed since the construction not only hinges on the jet space $J\!L$ as a bialgebroid dual to $V\!L$ but also passes through a sort of double dual that plays the r\^ole of the space of multivector fields; in particular, one has to make use of topological tensor products along with completions. Here, none of all this is required and the result can be obtained by purely algebraic operations. Finally, it appears (to us) more natural to regard $V\!L$ as the space of differential operators on a manifold (in case $L$ arises from a Lie algebroid) instead of $\Hom_A(J\!L, A)$.

\subsection{Notation and conventions}
\label{schonweniger}
All notation for bialgebroids, cyclic modules, operads etc.\ is explained in the respective sections or appendices at the end of the main text. Here, we only introduce some basic notation globally used.

The symbol $k$ always denotes a commutative ring, usually of characteristic zero.
For a left bialgebroid $(U,A)$ and a left $U$-module $M$, we most of the time denote the $U$-action just by juxtaposition, except for a few cases: for example, the monoidal structure on the category $\umod$ of left $U$-modules is reflected by the diagonal $U$-action on the tensor product $N \otimes_A M$ of two left $U$-modules $N, M$, that is,
\begin{equation}
  \label{mancino}
u \mancino (n \otimes_A m) :=
\gD(u)(n \otimes_A m) =
u_{(1)} n \otimes_A u_{(2)} m
\end{equation}
for $n \in N, m \in M$, and $u \in U$. If $U$ is on top a left resp.\ right Hopf algebroid
(see \S\ref{bialgebroids}),
%(see Appendix \ref{bialgebroids}),
one obtains a left $U$-module structure on $\Hom_\Aopp(N,M)$ resp.\ on $\Hom_A(N, M)$: in the first case, for all $f  \in \Hom_\Aopp(N,M)$, set
\begin{equation}
  \label{pmaction}
(u \pmact f)(n) := u_+ f(u_- n), 
  \end{equation}
and in the second case, for all $g \in \Hom_A(N,M)$, put
\begin{equation}
    \label{mpaction}
(u \mpact g)(n) := u_{[+]} g(u_{[-]} n),
\end{equation}
see right below Eqs.~\eqref{nochmehrRegen} or the beginning of Appendix \ref{nebula} for the notation used here.
Recall from Eq.~\eqref{pergolesi} the various triangle notations $\lact, \ract, \blact, \bract$ that denote the four $A$-module structures on the total space $U$ of a bialgebroid, and occasionally even on a $U$-module. We abbreviate tensor products $U_\ract \otimes_A \due U \lact {}$ with a vertical bar and tensor products in $ \due U \blact {} \otimes_\Aopp U_\ract$ with a comma, that is, write
\begin{equation}
  \label{samsung1}
  (u^1| \ldots|u^n) := u^1 \otimes_A \cdots \otimes_A u^n \in {{}_\lact U_\ract}^{\otimes_A n},
\end{equation}
as well as
\begin{equation}
  \label{samsung2}
(u^1, \ldots,u^n) := u^1 \otimes_\Aopp \cdots \otimes_\Aopp u^n \in {{}_\blact U_\ract}^{\otimes_\Aopp n}.
\end{equation}
This would somehow make more sense the other way round as the analogue of the {\em bar} resolution is defined on $U^{\otimes_\Aopp n}$, while $U^{\otimes_A n}$ is the right space for the {\em cobar} resolution, but for notational consistency with the predecessor \cite{Kow:WEIABVA} of this article, we decided to stick to the comma notation with respect to the tensor powers over $\Aop$.

Finally, to keep things simple in homological considerations, we always assume (and sometimes even repeat this explicitly) that $\due U {} \ract$ is flat as an $A$-module.
%and, whenever needed, that $M \in \umod$ is injective as a (right) $A$-module.

\section{Lie-Rinehart algebras and Hopf algebroids}

\subsection{Lie algebroids and Lie-Rinehart algebras}

%\subsubsection{Lie algebroids}
The notion of {\em Lie algebroids} 
starts by the simple observation that smooth sections of the tangent bundle $TQ$ over a smooth manifold $Q$, {\em i.e.}, the space of vector fields not only constitutes a Lie algebra over the ground ring $\C$ or $\R$ but also is a module over the smooth functions $\cC^\infty(Q)$ and in turn acts on these by the Lie derivative. Motivated by this example, in a Lie algebroid the tangent bundle is typically replaced by a sort of {\em alternative} tangent bundle, that is, a vector bundle $E \to Q$ endowed with a Lie algebra morphism $E \to TQ$, which has its applications coming from, {\em e.g.}, connection theory or Poisson geometry; see \cite{Pra:TDLPLGD} for one of the original references and \cite{CanWei:GMFNCA} for a concise overview.

%\subsubsection{Lie-Rinehart algebras}
The concept of {\em Lie-Rinehart algebra} as introduced by \cite{Rin:DFOGCA} and elaborated on in \cite{Hue:PCAQ} is the algebraic version of this construction. More precisely,
a Lie-Rinehart algebra $(A,L)$ is a pair consisting of a commutative (usually unital) $k$-algebra $A$ and an $A$-module $L$ which is simultaneously a $k$-Lie algebra equipped with a morphism of $k$-Lie algebras
$L\to\Der_k A, \ X \mapsto \{a \mapsto X(a)\}$, the {\em anchor} map, 
which is required to be a morphism of $A$-modules
such that for all $ X,Y \in L, \ a \in A$ the Leibniz identity
 $ {[X,aY]} = a[X,Y]+ X(a)Y $     %& \qqquad X,Y \in L, \ a \in A.
holds.

\subsection{Examples}
Obvious examples of Lie-Rinehart algebras from the algebraic side arise both from Lie algebras in which the base algebra equals the ground ring and the action on it is trivial, or from Lie algebroids from the geometric side.  One can also construct Lie algebroids as an infinitesimal version of Lie groupoids analogous to as Lie algebras are associated to Lie groups (see again \cite{CanWei:GMFNCA} for all technical difficulties and differences to the Lie group case). More advanced is the construction of the {\em Atiyah sequence} \cite{Ati:CACIFB} which associates a Lie algebroid to the {\em gauge groupoid} of a principal bundle used to study complex analytic connections.

\subsection{The universal enveloping algebra of differential operators}
Much the same way as one associates a universal algebra to any Lie algebra, for any Lie-Rinehart algebra $(A,L)$ there is a universal algebra $V\!L$, see \cite{Rin:DFOGCA, Hue:PCAQ}. We do not need (nor want) to discuss all technical details of its construction here but only mention that
the universal object $V\!L$
comes along with two canonical morphisms $A \to V\!L$ resp.\ $L \to V\!L$ of $k$-algebras resp.\ $k$-Lie algebras, the first one being always injective, the second one in general if $L$ is $A$-projective, in which case (similar as for Lie algebras) one proves a Poincar\'e-Birkhoff-Witt theorem that states $S_A L \simeq \mathrm{gr}(V\!L)$, see again {\em op.~cit.} for all subtleties involved.

If the Lie-Rinehart algebra $(A,L)$ arises from a Lie algebroid over a smooth manifold, one might want to consider $V\!L$ as the algebra of {\em differential operators} on that manifold.

\subsection{Bialgebroids {\cite{Tak:GOAOAA}}}
\label{bialgebroids}
A left bialgebroid $(U, A, \gD, \gve, s, t)$ is a generalisation of a $k$-bialgebra over a noncommutative base ring $A$; more precisely, it consists of a compatible algebra and coalgebra structure over $\Ae$ resp.\ over $A$; see, for example, \cite{Boe:HA} for details. In particular, it comes along with a ring homomorphism resp.\ antihomomorphism $s,t: A \to U$ (called source resp.\ target) that equip $U$ with four commuting $A$-module structures, denoted
\begin{equation}
  \label{pergolesi}
  a \blact b \lact u \ract c \bract d := t(c)s(b)us(d)t(a)
\end{equation}
  for $u \in U, \, a,b,c,d \in A$, and this situation will be abbreviated by
$
\due U {\blact \lact} {\ract \bract}
$
or any variation thereof, depending  on the action considered in a specific construction. In the same spirit, there is an obvious forgetful functor $\umod \to \amoda$ and therefore, for a left $U$-module $M$, we sometimes denote the induced $A$-bimodule structure by $a \lact m \ract b := s(a)t(b)m$ for $m \in M$, $a, b \in A$. Furthermore, as mentioned, along with a product in $U$, one has a 
coproduct
$$
\Delta: U \to \due U {} \ract \times_A \due U \lact {} \subset \due U {} \ract \otimes_A \due U \lact {}, \quad u \mapsto u_{(1)} \otimes_A u_{(2)}
$$
and a counit $\gve: U \to A$ subject to certain technicalities which we are not going to explain here but refer to \cite{Tak:GOAOAA} or elsewhere. Here,
$$
U \times_{\scriptscriptstyle A} U   :=
   \big\{ {\textstyle \sum_i} u_i \otimes  v_i \in U_{\!\ract}  \otimes_\ahha \!  \due U \lact {} \mid {\textstyle \sum_i} a \blact u_i \otimes v_i = {\textstyle \sum_i} u_i \otimes v_i \bract a,  \ \forall a \in A  \big\}
$$
is sometimes called {\em Sweedler-Takeuchi product}.
   
\subsection{Left and right Hopf algebroids {\cite{Schau:DADOQGHA}}}
\label{schau}
Generalising Hopf algebras (bialgebras with an antipode) to noncommutative base rings is {\em much} less straightforward and instead of asking for an antipode to exist, one rather wants a certain Hopf-Galois map to be invertible. More precisely, for a left bialgebroid $(U, A)$,  one considers the $U$-module morphisms
\begin{equation}
  \label{nochmehrRegen}
\begin{array}{rclrcl}
\ga_\ell : \due U \blact {} \otimes_{\Aopp} U_\ract &\to& U_\ract  \otimes_\ahha  \due U \lact,
& u \otimes_\Aopp v  &\mapsto&  u_{(1)} \otimes_\ahha u_{(2)}  v, \\
\ga_r : U_{\!\bract}  \otimes_\ahha \! \due U \lact {}  &\to& U_{\!\ract}  \otimes_\ahha  \due U \lact,
&  u \otimes_\ahha v  &\mapsto&  u_{(1)}  v \otimes_\ahha u_{(2)},
\end{array}
\end{equation}
and calls the left bialgebroid $(U,A)$ a {\em left Hopf algebroid} if $ \alpha_\ell  $ is a bijection and a {\em right Hopf algebroid}
%(even if $(U,A)$ is still a {\em left} bialgebroid)
if $\ga_r$ is so. For later convenience in explicit computations, the shorthand notation
$
u_+ \otimes_\Aopp u_-  :=  \alpha_\ell^{-1}(u \otimes_\ahha 1)
$
and
$
   u_{[+]} \otimes_\ahha u_{[-]}  :=  \alpha_r^{-1}(1 \otimes_\ahha u)
$
   will prove handy and these maps are sometimes called {\em translation maps}. In Appendix \S\ref{nebula}, we collect all necessary compatibility identities that hold between these two structures and the coproduct.
   Let us stress at this point that
   a left bialgebroid which is both left and right Hopf still does not imply the existence of an antipode required in the definition of a (full) Hopf algebroid in \cite{Boe:HA}; a concrete counterexample is given by the universal enveloping algebra $V\!L$ as we are going to discuss next.

   \subsection{The bialgebroid of differential operators}
   \label{voila}
   The universal enveloping algebra $V\!L$ %{\red{(as introduced in \cite{Rin:DFOGCA})}}
   of a Lie-Rinehart algebra $(A,L)$ is not only a left bialgebroid but also a left and right Hopf algebroid over this left bialgebroid structure; this still does not give a (full) Hopf algebroid in the sense of \cite{Boe:HA} as in general an antipode does not exist \cite[Prop.~3.11]{KowPos:TCTOHA}. The algebra $V\!L$ is generated by elements $a \in A$ and $X \in L$, and the left and right Hopf algebroid structure on $V\!L$ comes out as follows: source and target maps are equal and equal the canonical injection $A \to V\!L$, henceforth suppressed from notation. We therefore modify the notation for the tensor products in the Hopf-Galois maps \eqref{nochmehrRegen} by indicating the position of the elements in $A$ in the quotient, that is, write $V\!L \otimes^{ll} V\!L := V\!L_\ract  \otimes_\ahha  \due {V\!L} \lact {}$ and  $V\!L \otimes^{lr} V\!L := \due {V\!L} \blact {} \otimes_{\Aop} V\!L_\ract$, which in this case coincides with $V\!L_{\bract}  \otimes_A \! \due {V\!L} \lact {}$. On generators, the structure maps then read as
\begin{small}
\begin{equation}
\label{regalate}
  \begin{array}{rclrcl}
%    \gve(X) = 0, & \gve(a) = a, \\
    \gD(X) &\!\!\!\!\!=&\!\!\!\!\! X \otimes^{ll} 1 + 1 \otimes^{ll} X, &
    X_+ \otimes^{rl} X_- &\!\!\!\!\!=&\!\!\!\!\!  X_{[+]} \otimes^{rl} X_{[-]} =   X \otimes^{rl} 1 - 1 \otimes^{rl} X,
    \\
    \gD(a) &\!\!\!\!\!=&\!\!\!\!\! a \otimes^{ll} 1, 
& a_+ \otimes^{rl} a_- &\!\!\!\!\!=&\!\!\!\!\!  a_{[+]} \otimes^{rl} a_{[-]} =  a \otimes^{rl} 1, 
  \end{array}
  \end{equation}
\end{small}
along with $\gve(X) = 0$ and $\gve(a) = a$, and it is a quick check that this indeed defines a left bialgebroid and that the two Hopf-Galois maps indeed ``invert'' the coproduct in the sense described above.
The bialgebroid $V\!L$ is, in particular, cocommutative and hence the considerations made for this special case in \S\ref{nirvana} will apply. Again, if the Lie-Rinehart algebra $(A,L)$ arises from a Lie algebroid over a smooth manifold, we might want to call this the {\em bialgebroid of differential operators} on that manifold.

%for example, the universal enveloping algebra $V\!L$ %from \S\ref{examples}
%despite of being both left and right Hopf
%in general does not admit an antipode \cite[Prop.~3.11]{KowPos:TCTOHA}.}}

\section{\for{toc}{The HKR map and the derived functors $\Cotor$ and $\Coext$}\except{toc}{The Hochschild-Kostant-Rosenberg map and the derived functors $\Cotor$ and $\Coext$}}
\label{urgent}

\subsection{The Hochschild-Kostant-Rosenberg map}
The
%Hochschild-Kostant-Rosenberg (HKR)
HKR
map of
antisymmetrisation \cite{HocKosRos:DFORAA} has been, in a sense, the starting point of Kontsevich's formality considerations \cite{Kon:DQOPM} and correspondingly appears in various contexts dealing with related questions, see, for example, \cite{Dol:CAEFT, Cal:FFLA}.
The form in which we need it here is as follows. Assume from this section onwards
$\Q \subseteq k$ and consider the antisymmetrisation map
\begin{equation}
   \begin{array}{rcl}
   \label{hkr}
   \mathrm{Alt}:  \textstyle\bigwedge_A^n \!L &\to& V\!L^{\otimes^{ll}_A n} %\coc^n(V\!L,A),
 \\[2pt]
   X^1 \wedge \cdots \wedge X^n &\mapsto&
   {1}/{n!} \sum_{\gs \in S(n)} (-1)^\gs (X^{\gs(1)}|\ldots|X^{\gs(n)}). 
 \end{array}
   \end{equation}
Before we can state its homological (or homotopical) properties, we need to introduce the right receptacles for a (co)homology theory for $V \!L$ or, more general, for any left bialgebroid $(U, A)$, which will lead us to the somewhat exotic derived functors $\Cotor$ and $\Coext$. These are, in a sense we will
briefly explain now, dual to the well-known $\Tor$ and $\Ext$ and might appear at first glance a possibly not too urgent extension of the theory but as we are going to see not only in the example section \S\ref{examples}, they yield a direct algebraic and natural approach if one wants to embed the Cartan calculus in differential geometry into a more abstract framework.

\subsection{Cotor and comodules}
\label{cotor}
In this subsection, we will describe the derived functor of the cotensor product, which is called $\Cotor$ in analogy to the derived functor $\Tor$ of the ordinary tensor product; {\em cf.}~\cite{EilMoo:FORHA, Doi:HC} for classical information on the subject in the realm of customary coalgebras, \cite{BrzWis:CAC} for general corings, or still \cite[App.~A]{Rav:CCASHGOS} for commutative bialgebroids. For comodules over bialgebroids and involved technical features see \cite{Tak:GOAOAA}.

For a general bialgebroid $(U,A)$, the categories $\ucomod$ and $\comodu$ of left resp.\ right comodules
are not necessarily abelian, but are so if we assume that $\due U \lact {}$ resp.\ $\due U {} \ract$ are flat over $A$; hence, let us directly assume that $U_\ract$ is flat, as mentioned in \S\ref{schonweniger}. To shorten terminology, we shall not use a ``relative'' language, that is, we call a right $U$-comodule $P$ {\em injective} if it is a direct summand in a {\em free} one, that is, a comodule of the form $X \otimes_A U$  for a right $A$-module $X$.

\begin{dfn}
  Let $(U,A)$ be a left bialgebroid, $P \in \comodu$ with right coaction $\gvr_P$,
  % : n \mapsto n_{(0)} \otimes_A n_{(1)}$,
  and $M \in \ucomod$ with left coaction $\gl_M$. %: m \mapsto m_{(-1)} \otimes_A m_{(0)}$.
  The {\em cotensor product} $P \bx_U M$ is defined as the equaliser of the pair of maps
\begin{equation*}
  \begin{split}
  (\gvr_P \otimes_A M, P \otimes_A \gl_M): P \otimes_A M \rightrightarrows P \otimes_A \due U \lact \ract \otimes_A M,
  \end{split}
\end{equation*}
that is, as the kernel of the difference map.
%$ \gvr_P \otimes_A M -  P \otimes_A \gl_M$.
\end{dfn}
More explicitly, the cotensor product is given by the subspace 
$$
P \bx_U M = \big\{p \otimes_A m \in P \otimes_A M \mid p_{(0)} \otimes_A p_{(1)} \otimes_A m =  p \otimes_A m_{(-1)} \otimes_A m_{(0)} \},
$$
where we wrote $\gvr_P(p) = p_{(0)} \otimes_A p_{(1)}$ and $\gl_M(m) = m_{(-1)} \otimes_A m_{(0)}$ for the right resp.\ left $U$-coaction. For any $M \in \ucomod$, there is a natural isomorphism $U \bx_U M \to M$ given by $u \otimes_A m \mapsto \gve(u)m$ with inverse the left coaction $\gl_M$.
%$m \mapsto m_{(-1)} \otimes_A m_{(0)}$.
More generally, for any right $A$-module $X$ (that is, for any free right $U$-comodule of the form $X \otimes_A U)$, we have an isomorphism
\begin{eqnarray}
  \label{palazzofalconieri}
\phi: (X \otimes_A U) \bx_U M \to X \otimes_A M, \quad (x \otimes_A u) \bx_U m \mapsto x \otimes_A \gve(u)m, 
\end{eqnarray}
with inverse induced by the coaction of $M$ as above.
The functor of taking cotensor products is left exact in the first variable if $M$ is flat as a left $A$-module; the same holds in the second variable if $P$ is flat as a right $A$-module. As a consequence, we can define its right derived functors $\Cotor_U$: more precisely, considering that under the flatness assumptions on $U$ the category $\comodu$ is abelian has enough injectives, any
resolution $P \to I^\bullet$ of the right $U$-comodule $P$ by a 
cochain complex $(I^\bullet, \partial')$ of injective right $U$-comodules
is acyclic for the functor $- \bx_U M$, and we therefore define
$$
\Cotor^\bullet_U(P,M) := H\big(I^\bullet \bx_U M, \partial' \otimes_A M\big).
$$
The standard way of resolving the right $U$-comodule $P$ is by the well-known cobar cochain complex: set $\Cob^n(P, U) := P \otimes_A {\due U \lact \ract}^{\otimes_A n+1}$ for any $n \in \N$, and define the 
differential
$\partial' = \sum^{n+1}_{i=0} (-1)^i \partial'_i: \Cob^n(P, U)  \to \Cob^{n+1}(P, U)$,
where
\begin{small}
 \begin{equation}
\label{landliebecobar}
   \begin{array}{rcl}
 \partial'_i(p| u^1| \ldots| u^{n+1}) \!\!\!\!
  &=\!\!\!\!&
 \left\{
 \!\!\!
 \begin{array}{l} 
 (\gvr_P(p)|u^1|\ldots|u^{n+1})
 \\ 
 (p|u^1|\ldots|\gD(u^i)| \ldots | u^{n+1})
% \\ (n|u^1|\ldots|u^{n+1}|1)
 \end{array}\right.  
   \begin{array}{l} \mbox{if} \ i=0, \\ \mbox{if} \
     1 \leq i \leq n+1,
     %\\ \mbox{if} \ i = m + 2,
   \end{array}
 \end{array}
 \end{equation}
\end{small}
%
%\begin{equation*}
%  \begin{split}
%b(n|u^1|\ldots|u^{m+1}) &= (\gr_N(n)|u^1|\ldots|u^{m+1})
%+ \sum_{i=1}^{m+1} (-1)^{i}  (n|u^1|\ldots|\gD(u^i)| \ldots | u^{m+1})
%\\
%&
%\quad
%+ (-1)^{m} (n|u^1|\ldots|u^{n+1}|1),  
%\end{split}
%\end{equation*}
%
using the notation introduced in \eqref{samsung1}. As a consequence, $\Cotor_U^\bullet(P,M)$ can be computed by the cochain complex $\Cob^\bullet(P, U) \bx_U M$ with differential $\partial'$. 
Now, the right coaction on $\Cob^n(P, U)$ is simply defined by the coproduct on the rightmost tensor factor of $U$, which therefore yields a
%relative
free (hence
%relative
injective) resolution of $P$.
Applying the isomorphism $\phi$ from \eqref{palazzofalconieri}, we see that $\Cotor_U^\bullet(P,M)$ can effectively be computed by the chain complex $P \otimes_A U^{\otimes_A \bullet} \otimes_A M$ with differential $\partial  = \sum^{n+1}_{i=0} (-1)^i \partial_i$ in degree $n$, which is typically more convenient to consider.
Here, the cofaces $\partial_i := \phi \circ \partial'_i \circ \phi^{-1}$ come out as:
\begin{small}
  \begin{equation}
    \label{landliebecotor}
 \begin{array}{rcl}
 \partial_i(p| u^1| \ldots| u^n |m) \!\!\!\!
  &=\!\!\!\!&
 \left\{
 \!\!\!
 \begin{array}{l} 
 (\gvr_P(p)|u^1|\ldots|u^n|m)
 \\ 
 (p|u^1|\ldots|\gD(u^i)| \ldots | u^n|m)
 \\
 (p|u^1|\ldots|u^n|\gl_M(m))
 \end{array}\right.  
   \begin{array}{l} \mbox{if} \ i=0, \\ \mbox{if} \
     1 \leq i \leq n,
     \\ \mbox{if} \ i = n+1.
   \end{array}
 \end{array}
 \end{equation}
\end{small}
In case $P=A$, we will denote the resulting chain complex by
\begin{equation}
  \label{daa}
\coc^\bullet(U,M) := U^{\otimes_A \bullet} \otimes_A M,
\end{equation}
with differential $\partial$ as above and right $U$-coaction on $A$ given by the target map. With this notation, we have
$$
\Cotor_U^\bullet(A,M) = H(\coc^\bullet(U,M), \partial).
$$

\subsection{Coext and contramodules}
\label{coext}
In this subsection, another not too well-known derived functor is introduced, the so-called $\Coext$, the definition of which for coalgebras over a commutative ring appears in \cite[\S0.2]{Pos:HAOSAS} and possibly (much) earlier elsewhere. Again, we have to adapt the construction
given in {\em op.~cit.}\ to a  relative setting as we are dealing with corings over a (possibly noncommutative) base algebra $A$. As the construction amounts to a derived functor of the functor of {\em cohomomorphisms} which map comodules to {\em contramodules}, let us introduce these (equally essentially unknown) objects first.

\subsubsection{Contramodules over bialgebroids}
Contramodules over coalgebras were introduced in \cite{EilMoo:FORHA}
not too long after the notion of comodules but are, in striking contrast to the latter, basically unknown to most of the mathematical community.
They are dealt with, for example, in \cite{BoeBrzWis:MACOMC, Brz:HCHWCC} and gained the attention they deserve in particular in \cite{Pos:HAOSAS, Pos:C}.
For finite dimensional bialgebras (or bialgebroids), a contramodule should be thought of as a module over the dual.
%Contramodules
%
They also pop up as natural coefficients in the cyclic theory of $\Ext$ groups and
were implicitly used in the classical cyclic cohomology theory by Connes \cite{Con:NCDG} by choosing coefficients in the linear dual of an algebra, as explained in \cite[\S6]{Kow:WEIABVA}.

\begin{dfn}
\label{schoenwaers}
A {\em right contramodule} over a left bialgebroid $(U,A)$ is a right $A$-module $M$ along with a right $A$-module map 
$$
\gamma: \Hom_\Aopp(U_\ract,M) \to M, 
$$
called the {\em contraaction}, subject to {\em contraassociativity},
\begin{small}
  \begin{equation*}
  \begin{split}
  &	\xymatrix{\Hom_\Aopp(U, \Hom_\Aopp(U,M))
	\ar[rrr]^-{\scriptstyle{\Hom_\Aopp(U,\gamma)}} \ar[d]_-{\simeq}&
	& &
	\Hom_\Aopp(U,M) \ar[d]^-{\gamma} \\
	\Hom_\Aopp(U_\ract \otimes_\ahha \due U \lact {},M)
	\ar[rr]_-{\scriptstyle{\Hom_\Aopp(\gD_\ell, M)}} && \Hom_\Aopp(U,M) \ar[r]_-{\gamma}
&	M, }
\\
& \!\!\!\!\!\!\!\!\!\!\!
\mbox{\normalsize{as well as {\em contraunitality},}}
\\
& \xymatrix{\Hom_\Aopp(A,M) \ar[rr]^-{\Hom_\Aopp(\gve,M)} \ar[drr]_-{\simeq} & & \Hom_\Aopp(U,M)
    \ar[d]^-{\gamma} \\ & & M.  }
\end{split}
\end{equation*}
\end{small}
\end{dfn}
The adjunction of the leftmost vertical arrow in the first diagram is to be understood with respect to the right $A$-module structure on $\Hom_\Aopp(U_\ract,M)$ defined by 
%\begin{equation}
%\label{carrefour3}
$
fa := f(a \lact -) 
$
%\end{equation}
for $a \in A$; the right $A$-linearity of $\gamma$ in the definition then means
\begin{equation}
\label{passionant}
%$
\gamma\big(f(a \lact -) \big) = \gamma(f)a.
%$
\end{equation}
%which excludes in general the existence of a {\em trivial} right contraaction $f \mapsto f(1)$.
%; in some situations such as cocommutative bialgebroids or Hopf algebras, this problem disappears, see \S\ref{examples}.
Observe that there also is an induced left $A$-action on $M$ given by
\begin{equation}
\label{alleskleber}
am := \gamma\big(m\gve(- \bract a) \big) = \gamma\big(m\gve(a \blact -) \big),
\end{equation}
turning $M$ into an $A$-bimodule, and with respect to which $\gamma$ becomes an $A$-bimodule map:
\begin{equation}
  \label{tamtamdatam}
\gamma\big(f ( - \bract a)\big) = a \gamma\big(f(-)\big),
\end{equation}
as shown in \cite[Eq.~(2.37)]{Kow:WEIABVA}. In particular, we obtain
a forgetful functor 
\begin{equation}
\label{gaeta}
\mathbf{Contramod}\mbox{-}U \to \amoda
\end{equation}
from the category of right $U$-contramodules to that of $A$-bimodules.

In general, we denote the ``free entry'' in the contraaction $\gamma$ by hyphens or dots: for $f \in \Hom_\Aopp(U,M)$ we may write both $\gamma(f \sma{-})$ as well as $\gamma(f \sma{\cdot})$ or simply $\gamma(f)$, depending on readability in long computations: this way, the contraassociativity may be compactly expressed as
\begin{equation}
\label{carrefour1}
\dot\gamma\big(\ddot\gamma(g( \cdot \otimes_\ahha \cdot\cdot))\big) 
= \gamma\big(g(-_{(1)} \otimes_\ahha -_{(2)})\big),
\end{equation}
for $g \in \Hom_\Aopp(U_\ract \otimes_\ahha \due U \lact {},M)$,
where the dots match the map $\gamma$ with the respective argument, and
%, and where the inner contraaction $\ddot\gamma$ has to be carried out first as evident from the first diagram in Definition \ref{schoenwaers}.
%The second diagram explicitly then reads as
contraunitality as 
\begin{equation}
\label{carrefour2}
\gamma( m \gve\sma{-}) = m
\end{equation}
for $m \in M$. Finally, do not confuse the operation of contraaction with that of {\em contraction} dealt with from \S\ref{bononiae} onwards.

\begin{example}
  \label{trivial}
  Eq.~\eqref{passionant} in general excludes the existence of a {\em trivial} right contraaction $f \mapsto f(1)$, in full analogy to the fact that for bialgebroids in general there is no trivial (left or right) coaction. However, if $A$ is commutative and source and target map happen to coincide, then such a trivial contraaction is possible. This is, for example, the case for bialgebras or cocommutative bialgebroids, which we will explicitly exploit in \S\ref{nirvana} resp.~\S\ref{examples}. In particular, the universal enveloping algebra $V\!L$ of a Lie-Rinehart algebra $(A,L)$ is a cocommutative left bialgebroid and hence, any right (hence left) $A$-module $M$ can be given a right $V\!L$-contramodule structure by means of
  \begin{equation}
    \label{shaundasschaf1}
\Hom_A(V\!L, M) \to M, \quad f \mapsto f(1),
  \end{equation}
  keeping in mind that here $A = \Aop$.
  \end{example}

\begin{example}
  \label{bienenapotheke}
Essentially for the same reason, again in contrast to coalgebra theory, dualising bialgebroid comodules generally does {\em not} furnish examples of contramodules:
  in case of a coalgebra $C$, that is, for $A = k$, and $N$ a left $C$-comodule with coaction $\gl_N$, the linear dual $\Hom_k(N,k)$ is a right $C$-contramodule with contraaction
  $\gamma := \Hom_k(\gl_N, k)$. Trying to generalise this to a bialgebroid $(U,A)$, for $N \in \ucomod$ neither the right dual $\Hom_\Aop(N, A)$ nor the left dual  $\Hom_A(N, A)$ make this formula well-defined since linearity of the left coaction reads
  $\gl_N(anb) = a \lact n_{(-1)} \bract b \otimes_A n_{(0)}$
  for $a,b \in A$ and $n \in N$.
  \end{example}

\subsubsection{Cohomomorphisms over bialgebroids} We now have the necessary ingredients to deal with the space of cohomomorphisms and its derived functors.

\begin{dfn}
  Let $(U,A)$ be a left bialgebroid, $(P, \gvr_P)$ a right $U$-comodule 
  % : n \mapsto n_{(0)} \otimes_A n_{(1)}$,
  and $(M, \gamma)$ a right $U$-contramodule.
  The {\em space of cohomomorphisms} $\Cohom_U(P,M)$ is defined as the coequaliser of the pair of maps
\begin{equation*}
  \begin{split}
 & \big(\Hom_\Aopp(\gvr_P, M) , \Hom_\Aopp(P, \gamma)\big):
\\
 & \qquad \Hom_\Aopp(P \otimes_A \due U \lact {}, M) \simeq \Hom_\Aopp(P, \Hom_\Aopp(U,M)) \rightrightarrows \Hom_\Aopp(P,M), 
  \end{split}
  \end{equation*}
that is, the cokernel of the difference map.
\end{dfn}
Here, the $\Aop$-linearity on both sides of the adjunction refers to $\due U {} \ract$.
More explicitly, the space of cohomomorphisms can be described as the quotient
$$
\Cohom_U(P,M) = \Hom_\Aopp(P,M)/I,
$$
where $I$ is the $k$-module generated by
$$
\big\{g \circ \gvr_P - \dot\gamma(g(- \otimes_A \cdot)) \mid g \in \Hom_\Aopp(P \otimes_A \due U \lact {}, M) \big\}.
$$
That this yields a well-defined construction with respect to the right $A$-action follows from
\begin{equation*}
  \begin{split}
g(\gvr_P(pa)) - \gamma\big(g(pa \otimes_A \cdot)\big)
&= g(p_{(0)} \otimes_A p_{(1)} \ract a) -  \gamma\big(g(p \otimes_A a \lact \sma{\cdot})\big)
\\
&=
 g(p_{(0)} \otimes_A p_{(1)})a -  \gamma\big(g(p \otimes_A \cdot)\big)a
  \end{split}
  \end{equation*}
for $a \in A, p \in P$, using right linearity of the right coaction $\gvr_P : p \mapsto p_{(0)} \otimes_A p_{(1)}$ along with \eqref{passionant}. 

For any right $U$-contramodule $M$, there is a natural isomorphism
$
\Cohom_U(U,M) \to M, \ f \mapsto \gamma(f)
$
with inverse $m \mapsto m\gve(\cdot)$. More generally, for any right $A$-module $X$ (that is, for any %relative
free right $U$-comodule of the form $X \otimes_A U$), we have an isomorphism
\begin{eqnarray}
  \nonumber
\gvt: \Cohom_U(X \otimes_A U,M) &\to& \Hom_\Aopp(X,M), 
  \\
  \label{gethsemane1}
  f &\mapsto& \big\{ x \mapsto \gamma\big(f(x \otimes_A \cdot)\big)  \big\},
  \\
    \label{gethsemane2}
 \big\{  g(x\gve(u)) \mapsfrom x \otimes_A u  \big\} &\mapsfrom& g.
  \end{eqnarray}
%Recall that we assume $U_\ract$ to be $A$-flat.
Similarly as for coalgebras \cite[\S0.2.5]{Pos:HAOSAS}, the functor
$\Cohom_U(-,M)$
over a bialgebroid $(U,A)$ is right exact if
$U_\ract$ is $A$-flat 
and $M$ injective as a (right) $A$-module, and hence we can define in a standard way its left derived functors $\Coext^U$: similarly to the preceding subsection, any
resolution $P \to I^\bullet$ of the right $U$-comodule $P$ by a 
cochain complex $(I^\bullet, \partial')$ of injective right $U$-comodules
is acyclic for the functor
$
\Cohom_U(-, M)
$,
and we therefore define
$$
\Coext^U_\bullet(P, M) := H\big(\!\Cohom_U(I^\bullet, M), \Hom_\Aopp(\partial', M)\big).
$$
Using the cobar cochain complex from \eqref{landliebecobar} again, $\Coext^U_\bullet(P,M)$ can be computed by the chain complex $\Cohom_U(\Cob^\bullet(P, U), M)$ with differential $b' = \sum^{n+1}_{i=0} (-1)^i b'_i$, where $b'_i f := f \circ \partial'_i$ for any $f \in \Cohom_U(\Cob^n(P, U), M)$.
Again, considering the comodule structure of the cobar complex and applying the isomorphism $\gvt$ from \eqref{gethsemane1}, this time we see that $\Coext^U_\bullet(P,M)$ can effectively be computed by the chain complex $\Hom_\Aopp(P \otimes_A U^{\otimes_A \bullet},M)$ with differential $b  = \sum^{n}_{i=0} (-1)^i b_i$ in degree $n \in \N$, which usually is more practical, again.
Here, the faces $b_i := \gvt \circ b'_i \circ \gvt^{-1}$ by a quick computation using Eqs.~\eqref{gethsemane1}, \eqref{gethsemane2}, \eqref{passionant}, and \eqref{carrefour2}, result into 
\begin{small}
  \begin{equation}
    \label{landliebekirsch}
 \begin{array}{rcl}
 (b_i f)(p| u^1| \ldots| u^{n-1}) \!\!\!\!
  &=\!\!\!\!&
 \left\{
 \!\!\!
 \begin{array}{l} 
 f(\gvr_P(p)|u^1|\ldots|u^{n-1})
 \\ 
 f(p|u^1|\ldots|\gD(u^i)| \ldots | u^{n-1})
 \\
 \gamma(f(p|u^1|\ldots|u^{n-1}|\sma{\cdot}))
 \end{array}\right.  
   \begin{array}{l} \mbox{if} \ i=0, \\ \mbox{if} \
     1 \leq i \leq n-1,
    \\ \mbox{if} \ i = n,
   \end{array}
 \end{array}
 \end{equation}
\end{small}
for any $f \in \Hom_\Aopp(P \otimes_A U^{\otimes_A n},M)$. In case $P=A$, we will denote the resulting chain complex as
\begin{equation}
  \label{dee}
D_\bullet(U,M) := \Hom_\Aopp(U^{\otimes_A \bullet},M)
\end{equation}
with differential $b$ as above and right $U$-coaction on $A$ again given by the target map.
With this notation, we have
$$
\Coext^U_\bullet(A,M) = H(D_\bullet(U,M), b).
$$

Of course, one could equally resolve $M$ by (relative) projective contramodules (see \cite[\S0.2]{Pos:HAOSAS} again) to compute $\Coext^U_\bullet(P,M)$ but we are not going to pursue this possibility here.

\subsection{The HKR and its pull-back map as quasi-isomorphisms}

Having gathered all required material on a general level, we can now come back to the homological and homotopical properties of the HKR map.

 \begin{lemma}
   \label{dumdidum}
   The HKR map is a quasi-isomorphism of cochain complexes which induces an isomorphism
   $
   \textstyle\bigwedge_A^\bullet \!L \simeq \Cotor_{V\!L}^\bullet(A,A)
   $
   of Gerstenhaber algebras, with $L$ assumed to be $A$-flat. For a left $V\!L$-module $M$, seen as a trivial right $V\!L$-contra\-module,  
   the pull-back $\Hom_A({\rm Alt},M)$ yields a quasi-isomorphism
   \begin{equation}
     \label{herbst}
   \big(D_\bullet(V\!L,M), b\big) \lra  \big(\!\Hom_A(\textstyle\bigwedge^\bullet_A \!L, M), 0\big)
   \end{equation}
   of chain complexes. In particular, assuming $M$ to be $A$-injective, on homology
   \begin{equation}
     \label{erkaeltetmalwieder}
 \Coext^{V\!L}_\bullet(A,M) \stackrel{\simeq}{\lra} \Hom_A(\textstyle\bigwedge^\bullet_A \!L, M)
  \end{equation}
 holds. 
 \end{lemma}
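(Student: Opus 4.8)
The plan is to establish the three assertions in turn, deriving the contramodule statements for $\Coext$ from the comodule statement for $\Cotor$ by an explicit transposition of complexes.

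\emph{First assertion.} I would first check that $\Alt$ is a morphism of cochain complexes; as $\bigwedge_A^\bullet L$ carries the zero differential this reduces to $\pl \circ \Alt = 0$, i.e.\ that antisymmetrised products of elements of $L$ are cocycles in $\coc^\bullet(V\!L, A)$. This is a direct computation using that each $X \in L$ is primitive, $\gD(X) = X \otimes^{ll} 1 + 1 \otimes^{ll} X$ by \eqref{regalate}: an interior coface $\pl_i$ from \eqref{landliebecotor} splits the $i$-th slot into two unit-insertion terms, and the signs $(-1)^i$ together with the antisymmetry over $S(n)$ make all contributions cancel in pairs, the two outer cofaces (the target-induced trivial coactions on $A$) vanishing likewise. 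To see that $\Alt$ is a quasi-isomorphism I would pass to the Poincar\'e--Birkhoff--Witt filtration on $V\!L$, whose associated graded is the symmetric algebra $S_A L$ with its shuffle coproduct; the induced filtration on $\coc^\bullet(V\!L, A)$ produces a spectral sequence whose first page is the cobar complex of the cocommutative coalgebra $S_A L$, on which $\Alt$ becomes the classical antisymmetrisation realising the Koszul duality between the symmetric coalgebra $S_A L$ and the exterior algebra $\bigwedge_A^\bullet L$. Since $L$ is $A$-flat this comparison is an isomorphism, and convergence of the filtration lifts it to $\bigwedge_A^\bullet L \simeq \Cotor_{V\!L}^\bullet(A,A)$. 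The compatibility with the Gerstenhaber structures I would verify on representatives, matching the cup product with the wedge product (up to shuffle normalisation and a coboundary) and the Gerstenhaber bracket with the Schouten--Nijenhuis bracket, in the manner of the Hochschild--Kostant--Rosenberg comparison.

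\emph{Dualisation.} Since for $V\!L$ the base $A$ is commutative with $s = t$, we have $\Hom_\Aopp = \Hom_A$, and $M$ carries the trivial contraaction $\gamma(f) = f(1)$ of Example \ref{trivial}. I would then identify $D_\bullet(V\!L, M)$ of \eqref{dee} with $\Hom_A(\coc^\bullet(V\!L, A), M)$: degreewise this is clear from $\coc^n(V\!L,A) \cong V\!L^{\otimes_A n}$, and comparing the faces \eqref{landliebekirsch} with the cofaces \eqref{landliebecotor} under the contraaction $\gamma(f) = f(1)$ shows that the differential $b$ is exactly the transpose $\pl^\ast$. Under this identification $\Hom_A(\Alt, M)$ is the transpose $\Alt^\ast$, whose chain-map property is immediate from $\pl \circ \Alt = 0$ above. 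To obtain that $\Alt^\ast$ is the quasi-isomorphism \eqref{herbst} for \emph{arbitrary} $M$, I would upgrade the first assertion: because $\Q \subseteq k$, the antisymmetrisation admits an explicit $A$-linear symmetrisation homotopy exhibiting $\Alt$ as a deformation retract, and a deformation retract is preserved by the additive functor $\Hom_A(-, M)$; hence $\Alt^\ast$ is a homotopy equivalence, a fortiori a quasi-isomorphism.

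\emph{Conclusion and main obstacle.} Finally, $A$-injectivity of $M$ is precisely the hypothesis under which the cobar resolution of $A$ is acyclic for $\Cohom_{V\!L}(-, M)$, so that $H_\bullet(D_\bullet(V\!L, M), b) = \Coext_\bullet^{V\!L}(A, M)$; combining this with \eqref{herbst} yields \eqref{erkaeltetmalwieder}. The main obstacle I anticipate is the quasi-isomorphism in the first assertion: unlike the cochain-map property it is genuinely homological, and one must control convergence of the PBW spectral sequence and use $A$-flatness of $L$ to ensure the Koszul identification on the associated graded survives to the limit. A secondary, more computational point is providing the symmetrisation homotopy (and the Gerstenhaber-bracket matching) with enough explicitness that its transpose is manifestly a homotopy inverse, which is what secures the quasi-isomorphism \eqref{herbst} for coefficients $M$ that are not assumed $A$-injective.
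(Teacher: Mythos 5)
Your proposal is correct and follows the same overall route as the paper, which is, however, far terser: there, the first assertion is handled purely by citation (to \cite[Thm.~3.13, Cor.~3.8]{Kow:BVASOCAPB} for the Gerstenhaber isomorphism on $\Cotor$, and to the filtration arguments of \cite[Thm.~3.2.2]{Lod:CH} and \cite[\S{XIII.7}]{CarEil:HA} for the quasi-isomorphism property with $L$ infinite dimensional), and these references contain precisely the PBW-filtration argument you sketch. For the second assertion, the paper's entire proof is the observation you also make: for the cocommutative bialgebroid $V\!L$ with the trivial contraaction one has $D_\bullet(V\!L,M)=C_\bullet(V\!L,M)$ with $b = d = \Hom_A(\partial,M)$ (Eq.~\eqref{wasserbombe}), so that \eqref{herbst} is the transpose of the HKR quasi-isomorphism, and $A$-injectivity of $M$ enters only to identify the homology of $D_\bullet(V\!L,M)$ with $\Coext^{V\!L}_\bullet(A,M)$. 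What you add, and what the paper leaves implicit, is the justification that $\Hom_A(-,M)$ carries this particular quasi-isomorphism to a quasi-isomorphism for \emph{arbitrary} $M$: your upgrade of $\Alt$ to a deformation retract is the right fix, and is what the cited comparison arguments (and the explicit left inverse $\cP$ appearing later in Proposition \ref{nyc}) secretly provide. One caveat to flag: the symmetrisation/PBW construction of the contracting homotopy genuinely uses $A$-projectivity of $L$ (Rinehart's PBW theorem requires it), whereas the lemma nominally assumes only flatness; this looseness, however, is present in the paper's own treatment as well, and projectivity is restored in the main application (Theorem \ref{transactions}).
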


The first statement of the above lemma is well-known in its various guises, see, for example, \cite[Thm.~3.13]{Kow:BVASOCAPB}
for the statement in precisely the same context as here (and Corollary 3.8 in {\em op.~cit.}~again, which proves that the $\Cotor$ groups over a general bialgebroid $(U, A)$ constitute a Gerstenhaber algebra). The second statement simply follows from the comments that will have been made at the beginning of \S\ref{nirvana} along with the observation in Eq.~\eqref{wasserbombe}, so we postpone its actual proof until then.
 
Observe that we only assume $A$-flatness of $L$ here, but not necessarily finite dimensions. That $\Alt$ is a quasi-isomorphism even in case $L$ is infinite dimensional follows from an argument as in the proof of \cite[Thm.~3.2.2]{Lod:CH}, see also \cite[\S{XIII.7}]{CarEil:HA} for the free case.

In the example section \S\ref{examples}, we are going to see that the HKR map not only induces a map of mixed complexes (as already noted in, {\em e.g.}, \cite[Thm.~3.13]{Kow:BVASOCAPB}) but in particular prove in Theorem \ref{transactions} below that it yields an isomorphism of {\em noncommutative differential calculi} or, synonymously, of {\em BV modules}. To this end, we have to deal with the cyclic cohomology resp.\ homology theories of the complexes computing $\Ext$ and  $\Coext$ first, establishing the latter as the {\em cyclic dual} (in the sense of \S\ref{boing}) of the former.

\section{The complex computing $\Ext$ as a cocyclic module}

\subsection{Anti Yetter-Drinfel'd contramodules}
\label{atacvantaggi}

In most cyclic theories, not only the ones including a Hopf structure on the underlying ring or coring, to obtain a para-(co)cyclic object of any kind the possible coefficients typically exhibit more than one algebraic structure, for example, they need to be both modules and comodules or both modules and contramodules. In many cases, these double structures are not immediately recognised as such since one of them might be trivial and therefore invisible as happens in \S\ref{nirvana} for cocommutative bialgebroids and hence for $V\!L$, for example. In any case, to pass from para-(co)cyclic modules to truly cyclic ones, that is, such that
the (co)cyclic operator powers to the identity,
%fulfilling the last identity in Eqs.~\eqref{belleville},
a compatibility condition between these two algebraic structures is required. In the case at hand, we are interested in the following definition from \cite[Def.~4.3]{Kow:WEIABVA}:

\begin{dfn}
\label{chelabertaschen1}
An {\em anti Yetter-Drinfel'd (aYD) contramodule} $M$ over a left Hopf algebroid $(U,A)$ is simultaneously a left $U$-module (with action simply denoted by juxtaposition) and a right $U$-contramodule (with contraaction denoted by $\gamma$) such that both underlying $A$-bimodule structures from \eqref{pergolesi} and \eqref{gaeta} coincide, 
that is,
\begin{equation}
\label{romaedintorni}
a \lact m \ract b = amb, \qquad m \in M, \ a,b \in A,
\end{equation}
and such that action and contraaction are compatible in the sense that 
\begin{equation}
\label{nawas1}
u (\gamma(f)) = \gamma \big(u_{+(2)} f(u_-\sma{-}u_{+(1)}) \big), \qquad \forall u \in U, \ f \in \Hom_\Aopp(U,M).
%= \gamma \big((u_{(2)} \rightslice f)((-)u_{(1)}) \big)
\end{equation}
% for all $u \in U, \ f \in \Hom_\Aopp(U,M)$. 
An anti  Yetter-Drinfel'd contramodule is called {\em stable} if 
\begin{equation}
\label{stablehalt}
\gamma(\sma{-}m)= m
\end{equation}
for all $m \in M$, where we denote %$ \Hom_\Aopp(U,M) \ni 
$\sma{-}m \colon u \mapsto um$ as a map in $ \Hom_\Aopp(U,M)$.
\end{dfn}

\begin{example}
  \label{shaundasschaf2}
  Again, it is easy to see that for a cocommutative left bialgebroid $(U,A)$ with a left Hopf structure {\em any} left $U$-module $M$ becomes a stable aYD contramodule with respect to the trivial contraaction that evaluates at $1_U$, see Example \ref{trivial}: Eqs.~\eqref{romaedintorni} and \eqref{stablehalt} are immediate and Eq.~\eqref{nawas1} becomes
\begin{equation*}
  \begin{split}
  u (\gamma(f)) =
  u f(1_U) &= \big(u_{(1)+} f(u_{(1)-} u_{(2)})\big)
  \\
  &
=  \big(u_{+(2)} f(u_- 1_U u_{+(1)})\big)
= \gamma \big(u_{+(2)} f(u_-\sma{-}u_{+(1)}) \big),
\end{split}
  \end{equation*}
by cocommutativity along with Eqs.~\eqref{Sch3} and \eqref{Sch4} in Appendix \ref{nebula}. In particular, for 
the universal enveloping algebra $V\!L$ of a Lie-Rinehart algebra $(A,L)$, any left $V\!L$-module is automatically a stable aYD contramodule over $V\!L$.
  \end{example}

A similar definition in the realm of Hopf algebras appeared first in \cite{Brz:HCHWCC}, whereas 
for Hopf algebroids to our knowledge first in \cite{Kow:WEIABVA}.
We refer to %\cite[p.~1093]{Kow:WEIABVA}
{\em op.~cit.}, p.~1093, 
for more information about the (not so obvious) well-definedness of Eq.~\eqref{nawas1} and further implications. In particular, one can show that
\begin{equation}
  \label{hatschi}
\gamma(a \lact f\sma{-}) = 
\gamma\big(f(a \blact -)\big),
\end{equation}
where on the left hand side the left $A$-action on $M$ is meant.

The category $ {}_\uhhu \mathbf{aYD}^{\scriptscriptstyle{\rm contra-}\uhhu}$ of right aYD contramodules over a left bialgebroid $U$ in general is not monoidal, considering the fact that in finite dimensions this category is equivalent to that of left modules over the (right) dual $U^*$, see \cite[Lem.~4.6]{Kow:WEIABVA}, which is known not to be monoidal except for some special cases. However, similar to the case of aYD {\em modules}, the category $ {}_\uhhu \mathbf{aYD}^{\scriptscriptstyle{\rm contra-}\uhhu} $ is a module category over $\yd$, the category of Yetter-Drinfel'd (YD) modules over $U$, see \cite[Def.~4.2]{Schau:DADOQGHA}.
More precisely, with the following
we improve Proposition 4.8 in \cite{Kow:WEIABVA} by removing the finiteness condition: 

\begin{prop}
  \label{fliegenervt}
Let $(U,A)$ be a left bialgebroid.
\begin{enumerate}
\item
The operation
  \begin{equation}
    \label{diefliegeistimmernochda}
    \begin{array}{rcl}
      \ucomod \times \contramodu &\to& \contramodu,
      \\
      (N,M) &\mapsto&
      N
%      \rotatebox{180}{$\sphericalangle$}
            \varogreaterthan
%\mbox{\scalebox{0.75}{\ding{228}}}
            M :=
      \Hom_\Aopp(N, M)
\end{array}
    \end{equation}
  defines on $\contramodu$ the structure of a module category over the monoidal category $\ucomod$.
\item
  The operation \eqref{diefliegeistimmernochda} restricts to a left action
  $$
\yd \times {}_\uhhu \mathbf{aYD}^{\scriptscriptstyle{\rm contra-}\uhhu} \to {}_\uhhu \mathbf{aYD}^{\scriptscriptstyle{\rm contra-}\uhhu}.
  $$
Hence, $ {}_\uhhu \mathbf{aYD}^{\scriptscriptstyle{\rm contra-}\uhhu}$ is a module category over the monoidal category $\yd$.
\end{enumerate}
\end{prop}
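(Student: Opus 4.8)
The plan is to establish part (i) by a direct intrinsic construction and then to deduce part (ii) by decorating that construction with the extra module structures, exactly along the lines suggested in the text. The finiteness hypothesis in \cite[Prop.~4.8]{Kow:WEIABVA} serves only to dualise to $U^*$ and import a module structure over an honest algebra; the entire point here is to bypass this, so the technical device that will replace dualisation is the tensor--hom adjunction $\Hom_\Aopp(N \otimes_A N', M) \cong \Hom_\Aopp(N, \Hom_\Aopp(N', M))$, which holds with no finiteness assumption at all.

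For (i), I would first equip $N \varogreaterthan M = \Hom_\Aopp(N,M)$ with a contraaction. Writing $\gl_N(n) = n_{(-1)} \otimes_A n_{(0)}$ for the left $U$-coaction on $N$ and $\gamma$ for the contraaction on $M$, the natural candidate is
\[
\big(\tilde\gamma(F)\big)(n) := \gamma\big(u \mapsto F(u\,n_{(-1)})(n_{(0)})\big)
\]
for $F \in \Hom_\Aopp(U, \Hom_\Aopp(N,M))$, where $u\,n_{(-1)}$ is the product in $U$ of the free variable $u$ with $n_{(-1)}$. The first checks are that the inner map $u \mapsto F(u\,n_{(-1)})(n_{(0)})$ lies in $\Hom_\Aopp(U_\ract, M)$ and that the assignment is well defined over the balanced tensor product underlying $\gl_N$; both follow from the right $A$-linearity of $F$ together with \eqref{passionant} and the reading $u \ract c = t(c)u$ off \eqref{pergolesi}. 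Contraassociativity \eqref{carrefour1} and contraunitality \eqref{carrefour2} for $\tilde\gamma$ then reduce to the corresponding axioms for $\gamma$ after feeding in coassociativity and counitality of $\gl_N$. Finally, the module-category structure maps are the unit isomorphism $A \varogreaterthan M = \Hom_\Aopp(A,M) \cong M$ and the associativity isomorphism $(N \otimes_A N') \varogreaterthan M \cong N \varogreaterthan (N' \varogreaterthan M)$ supplied by the adjunction above (recall that $\varogreaterthan$ is contravariant in the comodule slot); one verifies that both are morphisms of contramodules, while the pentagon and triangle coherences are inherited from those of the underlying $A$-module adjunctions.

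For (ii), let $N \in \yd$ be a left $U$-module and left $U$-comodule, and $M \in {}_\uhhu \mathbf{aYD}^{\scriptscriptstyle{\rm contra-}\uhhu}$. On top of the contraaction $\tilde\gamma$ built from the comodule structure of $N$, I would impose on $\Hom_\Aopp(N,M)$ the left $U$-action of the internal-hom formula \eqref{pmaction}, namely $(u \pmact f)(n) = u_+ f(u_- n)$, now using both the $U$-action on $N$ and that on $M$. It then remains to verify the three defining properties of an aYD contramodule: that the two induced $A$-bimodule structures coincide, \eqref{romaedintorni}, which follows from the analogue of \eqref{romaedintorni} for $M$ together with the fact that the $\lact,\ract$-actions on the Hom-space are assembled from those of $N$ and $M$; and the key compatibility \eqref{nawas1} between $\pmact$ and $\tilde\gamma$.

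I expect the verification of \eqref{nawas1} for $\Hom_\Aopp(N,M)$ to be the main obstacle. It is a genuine Sweedler-type computation that must interlace three ingredients: the aYD condition \eqref{nawas1} for $M$ itself, the Yetter--Drinfel'd compatibility relating the action and coaction of $N$, and the translation-map identities for the left Hopf structure collected in Appendix \ref{nebula} (in particular those governing $u_+ \otimes_\Aopp u_-$ and the coproduct). The delicate part is to keep every expression balanced over the correct $A$-action throughout --- precisely the bookkeeping that finiteness would have trivialised by passage to the dual --- and to apply the coaction of $N$ inside the argument of $\gamma$ at the right moment so that the YD relation of $N$ can be invoked. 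Once \eqref{nawas1} is established, restricting the associativity and unit isomorphisms of (i) to these subcategories and checking that they respect the additional $U$-module structure shows that $\varogreaterthan$ indeed lands in ${}_\uhhu \mathbf{aYD}^{\scriptscriptstyle{\rm contra-}\uhhu}$ and makes it a module category over $\yd$.
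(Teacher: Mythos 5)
Your overall strategy coincides with the paper's: define the contraaction on $\Hom_\Aopp(N,M)$ by combining the coaction of $N$ with the contraaction of $M$, get the module-category structure from the tensor--hom adjunction, endow the Hom-space with the action \eqref{pmaction}, and verify the aYD axioms. However, there are two genuine gaps.

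First, your candidate contraaction has the product in the wrong order. With the convention $u \ract c = t(c)u$ from \eqref{pergolesi} --- which you yourself invoke --- the coaction of $N$ lands in $\due U {} \ract \otimes_A N$, so the balancing relation reads $t(a)n_{(-1)} \otimes_A n_{(0)} = n_{(-1)} \otimes_A a n_{(0)}$, and a map $F \in \Hom_\Aopp(U_\ract, \Hom_\Aopp(N,M))$ satisfies $F(t(a)u)=F(u)a$, i.e.\ its right $A$-linearity only allows you to extract a factor $t(a)$ sitting at the far \emph{left} of its argument. Your formula $\gamma_M\big(u \mapsto F(u\,n_{(-1)})(n_{(0)})\big)$ places $n_{(-1)}$ to the right of the free variable, so the factor $t(a)$ coming from the balancing gets stuck in the middle of $u\,t(a)\,n_{(-1)}$: the expression is not well defined on the tensor product, and \eqref{passionant} fails for it for the same reason (the element $s(a)$ produced by $F(a\lact -)$ on one side and by $\gl_N(an)= s(a)n_{(-1)}\otimes_A n_{(0)}$ on the other lands in different positions). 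The correct formula, the paper's \eqref{streitumasterix}, is $\gamma_M\big(f(n_{(-1)}\sma{-} \otimes_A n_{(0)})\big)$, with $n_{(-1)}$ multiplying the free slot on the \emph{left}. This is not a cosmetic slip, because the handedness is exactly what makes part (ii) work: applying the aYD condition \eqref{nawas1} of $M$ and then \eqref{Sch5} produces a factor $u_{-(2)}$ immediately to the right of the coaction element, yielding the pattern $(u_{-(1)}n)_{(-1)}u_{-(2)} \otimes_A (u_{-(1)}n)_{(0)}$, which is precisely what the YD identity \eqref{yd} converts into $u_{-(1)}n_{(-1)} \otimes_A u_{-(2)}n_{(0)}$; with your order, $u_{-(2)}$ and the coaction element end up on opposite sides of the free variable, and \eqref{yd} can never be brought to bear.

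Second, even granting the corrected formula, the heart of part (ii) --- the verification of \eqref{nawas1} for $\Hom_\Aopp(N,M)$ with the action \eqref{pmaction} --- is not carried out: you list the three ingredients (aYD for $M$, YD for $N$, the translation-map identities) and defer the computation as ``the main obstacle'', but naming the expected ingredients is not a proof. In the paper this is a substantial chain of identities (using \eqref{pmaction}, \eqref{streitumasterix}, \eqref{nawas1} for $M$, \eqref{Sch5}, \eqref{yd}, and \eqref{Sch4}), and it is exactly where the handedness problem above would have surfaced had the computation been attempted. The same remark applies, more mildly, to \eqref{romaedintorni}, which you assert rather than check; the paper's verification needs \eqref{alleskleber}, \eqref{tamtamdatam}, counitality, and the Takeuchi property of the coaction.
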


%This result improves Proposition 4.8 in \cite{Kow:WEIABVA} by removing the finiteness condition. 

\begin{proof}%[Proof of Proposition \ref{fliegenervt}]
  As for the first part, we have to show that for a left $U$-comodule $N$ and $M$ a right $U$-contramodule, $\Hom_\Aopp(N,M)$ can be endowed with a right contraaction as well. Once this contraaction is defined, from the adjunction $\Hom_\Aopp(N' \otimes_A N, M) \simeq \Hom_\Aopp(N', \Hom_\Aopp(N, M))$ for $N, N' \in \ucomod$, one then obtains
  $(N' \otimes_A N) \varogreaterthan M = N' \varogreaterthan (N \varogreaterthan M)$ and hence, $\otimes_A$ being the monoidal product in $\ucomod$, the claim.

  In order to define a right $U$-contraaction on $\Hom_\Aop(N,M)$, let $\gl_N: n \mapsto n_{(-1)} \otimes_A n_{(0)}$ denote the left $U$-coaction on $N$ whereas $\gamma_M$ the $U$-contraaction on $M$, and consider $\Hom_\Aopp(N,M)$ as a right $A$-module by $(ha)(n) := h(an)$ for $a \in A$ and $h \in \Hom_\Aopp(N,M)$. The following then defines a $U$-contraaction on $\Hom_\Aopp(N,M)$:
  \begin{equation}
    \label{streitumasterix}
    \begin{split}
  \gamma: \Hom_\Aopp(U, \Hom_\Aopp(N, M)) %\simeq \Hom_\Aopp(U \otimes_A N, M)
  &\to \Hom_\Aopp(N, M), \\
  f &\mapsto \big\{ n \mapsto \gamma_M\big(f(n_{(-1)}\sma{-} \otimes_A n_{(0)})\big),
    \end{split}
    \end{equation}
  with the adjunction $\Hom_\Aopp(U, \Hom_\Aopp(N, M)) \simeq \Hom_\Aopp(U_\ract \otimes_A N, M)$ implicitly understood. To show that this indeed defines a contraaction, we will make use of the fact that $\gamma_M$ is already a contraaction, {\em i.e.}, that Eqs.~\eqref{passionant}--\eqref{carrefour2} hold for $\gamma_M$. The right $A$-linearity \eqref{passionant} follows for $\gamma$ by simply observing $\gl_N(na) = n_{(-1)} \bract a \otimes_A n_{(0)}$ along with the right $A$-module structure on $\Hom_\Aopp(N,M)$ as above. Furthermore, for $g \in  \Hom_\Aopp(U \otimes_A U, \Hom_\Aopp(N, M))$ and $n \in N$,
   \begin{equation*}
    \begin{split}
      \dot\gamma\big(\ddot\gamma(g( \cdot \otimes_\ahha \cdot\cdot))\big)(n)
      &=  \dot\gamma_M\big(\ddot\gamma_M(g(n_{(-2)} \sma{\cdot} \otimes_\ahha n_{(-1)}  \sma{\cdot\cdot} \otimes_A n_{(0)}))\big)
      \\
      &= \gamma_M\big(g(n_{(-2)}\sma{-}_{(1)} \otimes_\ahha n_{(-1)}\sma{-}_{(2)} \otimes_A n_{(0)})\big)
      \\
      &= \gamma\big(g(\sma{-}_{(1)} \otimes_\ahha \sma{-}_{(2)})\big)(n),
    \end{split}
    \end{equation*}
which is \eqref{carrefour1} for the map $\gamma$ from \eqref{streitumasterix}. In the same spirit one proves \eqref{carrefour2} and therefore, $\gamma$ indeed constitutes a right $U$-contraaction on  $\Hom_\Aopp(N, M)$.

As for the second part, assume now that $N \in \yd$ and $M \in  {}_\uhhu \mathbf{aYD}^{\scriptscriptstyle{\rm contra-}\uhhu}$, that is, both $N, M$ in particular are left $U$-modules. Then $\Hom_\Aopp(N,M)$ becomes a left $U$-module as well by Eq.~\eqref{pmaction}, and in order to prove that $\Hom_\Aopp(N,M)$ even turns into a stable aYD contramodule over $U$, we need to show that this left action is compatible with the right contraaction in the sense of Eqs.~\eqref{romaedintorni}--\eqref{nawas1}.
Let $h \in \Hom_\Aopp(N,M)$ and $a,b \in A$. That $h \ract b = hb$ follows immediately from \eqref{pmaction} and \eqref{Sch9}. On the other hand,
\begin{equation*}
  \begin{split}
    (ah)(n) &
    \overset{\scriptscriptstyle{\eqref{alleskleber}}}{=}
    \gamma\big(h\gve(\sma{-} \bract a) \big)(n)
    \overset{\scriptscriptstyle{\eqref{streitumasterix}}}{=}
\gamma_M\big(h(\gve(n_{(-1)}\sma{-} \bract a)n_{(0)}\big)
\\
&
    \overset{\scriptscriptstyle{\eqref{tamtamdatam}}}{=}
a \lact \gamma_M\big(h(\gve(n_{(-1)}\sma{-})n_{(0)}\big)
=
a \lact \gamma_M\big(h(n \gve(\sma{-})) \big)
\\
&
=
a \lact \gamma_M\big(h(n) \gve(\sma{-}) \big)
    \overset{\scriptscriptstyle{\eqref{carrefour2}}}{=}
a \lact h(n),
  \end{split}
  \end{equation*}
where in the fourth step we used the properties of a bialgebroid counit, counitality and the fact that the coaction maps into a Takeuchi subspace similar to the coproduct as in
%Appendix
\S\ref{bialgebroids}.
This proves \eqref{romaedintorni}. Moreover, using the fact that $N$ is a YD module and hence the compatibility
\begin{equation}
  \label{yd}
 (u_{(1)}n)_{(-1)} u_{(2)} \otimes_A (u_{(1)}n)_{(0)}= 
  u_{(1)} n_{(-1)} \otimes_A u_{(2)} n_{(0)}
\end{equation}
holds between left $U$-action and left $U$-coaction (see \cite[Def.~4.2]{Schau:DADOQGHA}),
one computes for $f \in \Hom_\Aopp(U, \Hom_\Aopp(N,M))$ that
\begin{small}
\begin{eqnarray*}
(u \pmact \gamma(f))(n)
  &
\!\!\!\!\!\!
\overset{{\scriptscriptstyle{\eqref{pmaction}}}}{=}
&
\!\!\!\!\!\!
u_+(\gamma(f)(u_-n))
 \\
 &
\!\!\!\!\!\!
 \overset{{\scriptscriptstyle{\eqref{streitumasterix}}}}{=}
&
\!\!\!\!\!\!
u_+ \big(\gamma_M\big(f((u_-n)_{(-1)}\sma{-} \otimes_A  (u_-n)_{(0)})\big) \big)
\\
&
\!\!\!\!\!\!
\overset{{\scriptscriptstyle{\eqref{nawas1}}}}{=}
&
\!\!\!\!\!\!
\gamma_M\big(u_{++(2)}f((u_-n)_{(-1)}u_{+-} \sma{-} u_{++(1)} \otimes_A  (u_-n)_{(0)})\big)
\\
&
\!\!\!\!\!\!
\overset{{\scriptscriptstyle{\eqref{Sch5}}}}{=}
&
\!\!\!\!\!\!
\gamma_M\big(u_{+(2)}f((u_{-(1)}n)_{(-1)}u_{-(2)} \sma{-} u_{+(1)} \otimes_A  (u_{-(1)}n)_{(0)})\big)
\\
&
\!\!\!\!\!\!
\overset{{\scriptscriptstyle{\eqref{yd}}}}{=}
&
\!\!\!\!\!\!
\gamma_M\big(u_{+(2)}f(u_{-(1)}n_{(-1)} \sma{-} u_{+(1)} \otimes_A  u_{-(2)}n_{(0)})\big)
\\
&
\!\!\!\!\!\!
\overset{{\scriptscriptstyle{\eqref{Sch4}, \eqref{pmaction}}}}{=}
&
\!\!\!\!\!\!
\gamma_M\big((u_{(2)} \pmact f)(n_{(-1)} \sma{-} u_{(1)} \otimes_A n_{(0)})\big)
\\
&
\!\!\!\!\!\!
\overset{{\scriptscriptstyle{\eqref{streitumasterix}}}}{=}
&
\!\!\!\!\!\!
\gamma\big((u_{(2)} \pmact f)(\sma{-} u_{(1)})\big)(n),
\end{eqnarray*}
\end{small}
which by \eqref{Sch4} again is \eqref{nawas1} for $\Hom_\Aopp(N,M)$ with $U$-action \eqref{pmaction} and $U$-contraaction \eqref{streitumasterix}.
  \end{proof}

\begin{rem}
  The possible stability of the aYD contramodule $\Hom_\Aopp(N,M)$ does {\em not} automatically follow from the possible stability of the aYD contramodule $M$: the stability condition for $\Hom_\Aopp(N,M)$ explicitly reads
  \begin{equation}
    \label{misosuppekyoto}
    \gamma(\sma{-} \pmact h)(n) = \gamma_M\big((n_{(-1)}\sma{-}) \pmact h(n_{(0)})\big) = h(n)
  \end{equation}
  for $h \in \Hom_\Aopp(N,M)$. Even in case of a Hopf algebra over a commutative ring $k$ with involutive antipode $S$, considering $M=k$ as a stable aYD contramodule with trivial action and trivial contraaction (Example \ref{trivial}), the left hand side in \eqref{misosuppekyoto} reads
  $ \gamma(\sma{-} \pmact h)(n) = h(S(n_{(-1)}) n_{(0)})$,
  which in general is different from the right hand side $h(n)$.
  \end{rem}

\subsection{The cocyclic module}
\label{nachher}
In \cite[\S4.2]{Kow:WEIABVA}, for a left Hopf algebroid $(U,A)$ and a left $U$-module right $U$-contramodule $M$, we defined a para-cocyclic $k$-module structure on
\begin{equation}
  \label{mostaccioli}
C^\bullet(U,M) := \Hom_\Aopp(U^{\otimes_\Aopp\bullet}, M),
\end{equation}
where the tensor products are taken with respect to the $A$-bimodule structure $\due U \blact \ract$. Explicitly, in degree $q \in \N$ the structure maps are given by
\begin{small}
\begin{equation}
  \label{nadennwommama}
\begin{array}{rcl}
  (\gd_i f)(u^1, \ldots, u^{q+1}) \!\!\!\!
  &=\!\!\!\!&
  \left\{\!\!\!
\begin{array}{l} 
u^1 f(u^2, \ldots, u^{q+1})
\\ 
 f(u^1, \ldots, u^{i} u^{i+1}, \ldots, u^{q+1})
\\
 f(u^1, \ldots, \gve(u^{q+1}) \blact u^q)
\end{array}\right.  
  \begin{array}{l} \mbox{if} \ i=0, \\ \mbox{if} \
  1 \leq i \leq q, \\ \mbox{if} \ i = q + 1,  \end{array} 
\\
\
\\
(\gs_j f)(u^1, \ldots, u^{q-1}) \!\!\!\! 
&=\!\!\!\!& f(u^1, \ldots, u^{j}, 1, u^{j+1}, \ldots, u^{q-1}) \quad \mbox{for} \  0 \leq j \leq q-1,
\\
\
\\
%(\tau f)(u^1, \ldots, u^n) &=& \gamma\big(u^1_{+(2)} \cdots u^{n-1}_{+(2)} u^n_+ f(u^n_- \cdots u^1_- (-), u^1_{+(1)}, \ldots, u^{n-1}_{+(1)}) \big) \\
  (\tau f)(u^1, \ldots, u^q) \!\!\!\!&=\!\!\!\!& \gamma\big(((u^1_{(2)} \cdots u^{q-1}_{(2)} u^q) \pmact  
  f)(-, u^1_{(1)}, \ldots, u^{q-1}_{(1)}) \big),
\end{array}
\end{equation}
\end{small}
the cosimplicial part of which computes the $\Ext$ functor in case $U_\ract$ is flat as an $A$-module, that is, $H(C^\bullet(U,M), \gd) \simeq \Ext^\bullet_U(A,M)$, where as always $\gd := \sum_{i=0}^{n+1} (-1)^i \gd_i$. This para-cocyclic $k$-module becomes cyclic if $M$ is a stable aYD contramodule.

%
% \begin{rem}
%  Contraactions on a right $A$-module $M$ might not exist as there is in general no trivial contraaction, as already said a couple of times, not even on the base algebra $A$ itself.
% Therefore, not every cochain complex computing $\Ext$ groups with trivial coefficients is necessarily a cocyclic $k$-module. As an immediate consequence by Theorem 1.7 in \cite{Men:}, Hochschild cohomology $HH^\bullet(A,A) = \Ext^\bullet_{\Ae}(A,A)$ 
% \end{rem}

In view of \S\ref{atacvantaggi}, we can now fill in more general coefficients in the first entry:

\begin{prop}
  \label{arteepolitica}
  Let $(U, A)$ be a left Hopf algebroid, $N$ a left $U$-module left $U$-comodule, and $M$ a  left $U$-module right $U$-contramodule. Then
$$
  C^\bullet(U \otimes_\Aopp N,M) := \Hom_\Aopp(U^{\otimes_\Aopp\bullet} \otimes_\Aopp N, M)
$$
  can be given the structure of a para-cocyclic $k$-module (the cohomology of which computes $\Ext^\bullet_U(N,M)$ if $U_\ract$ is $A$-flat), which is cyclic if $N$ is a YD module, $M$ an aYD contramodule, and $\Hom_\Aopp(N,M)$ is stable as in \eqref{misosuppekyoto}. In particular, the cyclic coboundary induces an operator
  $$
B: \Ext^\bullet_U(N,M) \to \Ext_U^{\bullet-1}(N,M),
$$
which squares to zero.
  \end{prop}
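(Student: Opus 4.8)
The plan is to reduce the whole statement to the already-established case $N = A$ of \cite[\S4.2]{Kow:WEIABVA} by combining the $\Aop$-linear hom-tensor adjunction with Proposition \ref{fliegenervt}. The structural starting point is the natural isomorphism of graded $k$-modules
\begin{equation*}
C^\bullet(U \otimes_\Aopp N, M) = \Hom_\Aopp(U^{\otimes_\Aopp\bullet} \otimes_\Aopp N, M) \simeq \Hom_\Aopp(U^{\otimes_\Aopp\bullet}, \Hom_\Aopp(N,M)) = C^\bullet(U, \Hom_\Aopp(N,M)),
\end{equation*}
which lets one pull back everything from the coefficient $\Hom_\Aopp(N,M)$. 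By the first part of Proposition \ref{fliegenervt}, the latter is a right $U$-contramodule, and equipping it with the left $U$-action \eqref{pmaction} turns it into a left $U$-module right $U$-contramodule — precisely the type of coefficient for which \cite[\S4.2]{Kow:WEIABVA} produces a para-cocyclic structure with operators as in \eqref{nadennwommama}. Transporting that structure along the displayed isomorphism endows $C^\bullet(U \otimes_\Aopp N, M)$ with a para-cocyclic $k$-module structure; unwinding the transported maps, in particular the contraaction \eqref{streitumasterix} of $\Hom_\Aopp(N,M)$, recovers explicit faces, degeneracies, and a cyclic operator $\tau$ that now also involves the $U$-coaction of $N$.

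For the cohomological identification I would argue directly through a bar resolution, so that the $A$-flatness hypothesis on $\due U {} \ract$ does the work. Since $U$ is a left Hopf algebroid, $B_\bullet(U,N) := U^{\otimes_\Aopp \bullet + 1} \otimes_\Aopp N$ is a resolution of $N$ by relatively projective left $U$-modules precisely because $\due U {} \ract$ is $A$-flat, and applying $\Hom_U(-,M)$ together with the freeness isomorphism $\Hom_U(U \otimes_\Aopp X, M) \simeq \Hom_\Aopp(X,M)$ returns exactly the cosimplicial complex $C^\bullet(U \otimes_\Aopp N, M)$. Hence its cohomology is $\Ext^\bullet_U(N,M)$; via the reduction of the previous paragraph this simultaneously identifies it with the known value $\Ext^\bullet_U(A, \Hom_\Aopp(N,M))$ computed from the coefficient $\Hom_\Aopp(N,M)$.

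Cyclicity and the differential $B$ are then immediate from the reduction. By the established case, $C^\bullet(U, \Hom_\Aopp(N,M))$ is cyclic, that is $\tau^{q+1} = \id$ in each cohomological degree $q$, exactly when its coefficient is a \emph{stable} aYD contramodule. The second part of Proposition \ref{fliegenervt} guarantees that $\Hom_\Aopp(N,M)$ is an aYD contramodule once $N \in \yd$ and $M \in {}_\uhhu \mathbf{aYD}^{\scriptscriptstyle{\rm contra-}\uhhu}$, while its stability is by definition the standing hypothesis \eqref{misosuppekyoto}; consequently $C^\bullet(U \otimes_\Aopp N, M)$ is cyclic. For such a cyclic $k$-module the Connes coboundary arises in the standard way (recalled in \eqref{e-mantra}) as a normalised operator of degree $-1$, which descends to cohomology and satisfies $B^2 = 0$ as a formal consequence of the cyclic identities, yielding $B \colon \Ext^\bullet_U(N,M) \to \Ext^{\bullet-1}_U(N,M)$.

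The step I expect to be the main obstacle is the compatibility check underpinning the reduction: one must verify that the para-cocyclic structure transported through the hom-tensor adjunction has cosimplicial part \emph{literally equal} to the bar-resolution complex $\Hom_U(B_\bullet(U,N),M)$, so that the cyclic structure sits on the very complex computing $\Ext^\bullet_U(N,M)$ rather than on a merely quasi-isomorphic one, and — more delicately — that the cyclic operator obtained by expanding \eqref{streitumasterix} agrees with the expected explicit $\tau$ incorporating the $U$-coaction on $N$. This matching of adjunction-transported operators with the bar-resolution faces and with \eqref{nadennwommama} specialised to $\Hom_\Aopp(N,M)$ is where the genuine bookkeeping concentrates; the para-cocyclic relations themselves and the cyclicity are then inherited for free from the $N = A$ case.
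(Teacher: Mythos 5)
Your overall skeleton is the same as the paper's: reduce to the $N=A$ case with coefficients $\Hom_\Aopp(N,M)$ via Proposition \ref{fliegenervt}, transport the operators \eqref{nadennwommama}, deduce cyclicity from the stability hypothesis \eqref{misosuppekyoto}, and get $B$ from the standard cyclic machinery. However, there is a genuine gap at your ``structural starting point'', and it is exactly where the substance of the paper's proof lies. The displayed identification
\begin{equation*}
\Hom_\Aopp(U^{\otimes_\Aopp\bullet} \otimes_\Aopp N, M) \simeq \Hom_\Aopp(U^{\otimes_\Aopp\bullet}, \Hom_\Aopp(N,M))
\end{equation*}
is \emph{not} the hom-tensor adjunction: the adjunction attached to the contraaction of Proposition \ref{fliegenervt} identifies $\Hom_\Aopp(U^{\otimes_\Aopp \bullet}, \Hom_\Aopp(N,M))$ with $\Hom_\Aopp\big((U^{\otimes_\Aopp \bullet}) \otimes_A N, M\big)$, where the last tensor product is over $A$ (via the right $A$-module structure on $N$), not over $\Aop$ as in the complex of the statement. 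These are different $k$-modules. The paper bridges them by composing the adjunction with a nontrivial isomorphism $\chi$, sending $f$ to $\big\{(u^1,\ldots,u^q,n) \mapsto f\big((u^1_{(1)},\ldots,u^q_{(1)})\,|\, u^1_{(2)}\cdots u^q_{(2)} n\big)\big\}$, whose inverse is built from the translation maps $u_+ \otimes_\Aopp u_-$; this is precisely where the left Hopf hypothesis enters the cosimplicial/$\Ext$ part of the claim.

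This is not a cosmetic point, because the choice of transporting isomorphism determines the transported structure maps. You defer to a ``compatibility check'' that the adjunction-transported cosimplicial part equals the bar-type complex computing $\Ext^\bullet_U(N,M)$, but for the transport you propose that check would fail --- the plain adjunction does not even land on $C^\bullet(U\otimes_\Aopp N,M)$. It is only after twisting by $\chi$ that the cofaces come out as the bar-resolution ones, e.g.\ $(\gd'_{q+1}g)(u^1,\ldots,u^{q+1},n)=g(u^1,\ldots,u^q,u^{q+1}n)$, so that the cohomology is $\Ext^\bullet_U(N,M)$; the cocyclic operator $\tau':=\chi\circ\tau\circ\chi^{-1}$ then inherits the para-cocyclic relations and cyclicity (under your stated hypotheses) because $\chi$ is a $k$-module isomorphism. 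So the missing ingredient is the construction of $\chi$ itself, a higher Hopf--Galois-type map, rather than bookkeeping. (A minor additional inaccuracy: relative projectivity of $U^{\otimes_\Aopp \bullet+1}\otimes_\Aopp N$ does not come from $A$-flatness of $U_\ract$; flatness is what makes the homological comparison work.) The remainder of your argument --- cyclicity from Proposition \ref{fliegenervt} plus \eqref{misosuppekyoto}, and $B$ squaring to zero on cohomology via \eqref{e-mantra} and the SBI argument --- is correct once this isomorphism is supplied.
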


\begin{proof}
  We simply have to transport the structure maps \eqref{nadennwommama} on $C^\bullet(U, \Hom_\Aopp(N,M))$ for the contramodule $\Hom_\Aopp(N,M)$ to the space $C^\bullet(U \otimes_\Aopp N,M)$ by the correct isomorphism of $k$-modules that produces the correct underlying cosimplicial $k$-module: this isomorphism between $\Hom_\Aopp(U^{\otimes_\Aopp \bullet}, \Hom_\Aopp(N,M))$ and $\Hom_\Aopp(U^{\otimes_\Aopp \bullet} \otimes_\Aopp N, M)$ is not a simple adjunction but rather the adjunction $\Hom_\Aopp(U^{\otimes_\Aopp \bullet}, \Hom_\Aopp(N,M)) \simeq \Hom_\Aopp((U^{\otimes_\Aopp \bullet}) \otimes_A N, M)$, where on the right hand side the $\Aop$-linearity refers to the right $A$-module structure on $N$,  followed by the $k$-module isomorphism
  \begin{eqnarray*}
    \chi:
    \Hom_\Aopp((U^{\otimes_\Aopp q}) \otimes_A N, M) &\to& \Hom_\Aopp(U^{\otimes_\Aopp q} \otimes_\Aopp N, M),
\\
f \mapsto \big\{(u^1, \ldots, u^q , n) &\mapsto&  f((u^1_{(1)}, \ldots, u^q_{(1)}) | u^1_{(2)} \cdots u^q_{(2)}  n)\big\},
\\
\big\{g(u^1_+, \ldots, u^q_+ ,  u^q_- \cdots  u^1_- n) &\mapsfrom& ((u^1, \ldots, u^q) | n) \big\} \mapsfrom g,
    \end{eqnarray*}
in degree $q \in \N$,
  where on the right hand side the $\Aop$-linearity now refers to the right $A$-module structure on the first tensor factor of $U$. Defining then $\gd'_i := \chi \circ \gd_i \circ \chi^{-1}$ and $\gs'_j := \chi \circ \gs_j \circ \chi^{-1}$ by means of the cofaces and codegeneracies from \eqref{nadennwommama}, a quick computation reveals
%\begin{small}
\begin{equation*}
  \label{nadennwommamanomma}
\begin{array}{rcl}
  (\gd'_i g)(u^1, \ldots, u^{q+1}, n) \!\!\!\!
  &=\!\!\!\!&
  \left\{\!\!\!
\begin{array}{l} 
u^1 g(u^2, \ldots, u^{q+1}, n)
\\ 
 g(u^1, \ldots, u^{i} u^{i+1}, \ldots, u^{q+1}, n)
\\
 g(u^1, \ldots, u^{q}, u^{q+1}n)
\end{array}\right.  
  \begin{array}{l} \mbox{if} \ i=0, \\ \mbox{if} \
  1 \leq i \leq q, \\ \mbox{if} \ i = q + 1,  \end{array} 
  \\[14pt]
(\gs'_j g)(u^1, \ldots, u^{q-1}, n) \!\!\!\! 
&=\!\!\!\!& f(u^1, \ldots, u^{j}, 1, u^{j+1}, \ldots, u^{q-1}, n) \quad \mbox{for} \  0 \leq j \leq q-1,
%\\
%\
%\\
%(\tau f)(u^1, \ldots, u^n) &=& \gamma\big(u^1_{+(2)} \cdots u^{n-1}_{+(2)} u^n_+ f(u^n_- \cdots u^1_- (-), u^1_{+(1)}, \ldots, u^{n-1}_{+(1)}) \big) \\
%  (\tau f)(u^1, \ldots, u^n) \!\!\!\!&=\!\!\!\!& \gamma\big(((u^1_{(2)} \cdots u^{n-1}_{(2)} u^n) \pmact  
%  f)(-, u^1_{(1)}, \ldots, u^{n-1}_{(1)}) \big),
\end{array}
\end{equation*}
%\end{small}
for the cosimplicial $k$-module structure on $C^\bullet(U \otimes_\Aopp N,M)$,
which clearly yields a cochain complex that computes $\Ext_U^\bullet(N,M)$ if $U_\ract$ is $A$-flat. Likewise, by putting $\tau' := \chi \circ \tau \circ \chi^{-1}$ we can promote this cosimplicial module to a para-cocyclic one which is cyclic if $N$ is a YD module, $M$ an aYD module, and $\Hom_\Aopp(N,M)$ stable %in the sense of \eqref{misosuppekyoto}
as follows directly from the respective property of the structure maps \eqref{nadennwommama}, Proposition \ref{fliegenervt}, and the fact that $\chi$ is a $k$-module isomorphism. Explicitly, the cocyclic operator is given by
\begin{small}
  \begin{equation*}
    \begin{split}
(\tau' g)(u^1, \ldots, u^q, n) =
      \gamma_M\Big( & u^1_{(2)+} \cdots u^{q-1}_{(2)+} u^q_+ g\big( (n_{(-1)} u^q_- u^{q-1}_{(2)-} \cdots u^{1}_{(2)-}\sma{\cdot})_+, u^1_{(1)+},
      \\
      &
      \ldots,
      u^{q-1}_{(1)+},  u^1_{(1)-} \cdots u^{q-1}_{(1)-} (n_{(-1)} u^q_- u^{q-1}_{(2)-} \cdots u^{1}_{(2)-}\sma{\cdot})_- n_{(0)} \big)
\Big),
    \end{split}
  \end{equation*}
  \end{small}
where $\gamma_M$ denotes the contraaction on $M$ and which, however, is neither nice nor really helpful to stare at but at least reduces to $\tau f$ in \eqref{nadennwommama} again if $N = A$.
  
  The statement about the cyclic coboundary follows by a standard argument involving an SBI sequence, see \cite[\S2.2]{Lod:CH}.
  \end{proof}

\section{The complex computing $\Coext$ as a cyclic module}

In the rest of this article, for the mere sake of simplicity to avoid too messy formul\ae, we restrict ourselves to the case in which $N$ equals the base algebra $A$
itself, with left $U$-action given by $ua := \gve(u \bract a)$ for $u \in U, a \in A$, and left $U$-coaction given by the source map. 

The aim of this section is to compute the cyclic dual in the sense of \S\ref{boing} of the cocyclic module $C^\bullet(U,M)$ from \eqref{mostaccioli}--\eqref{nadennwommama}, where $M$ is a stable aYD contramodule. Merely applying the formula for cyclic duality in \eqref{cyclicdual} does not quite yield the desired result as we are interested in obtaining a cyclic structure on the complex $\Cohom_U(U^{\otimes_A \bullet +1}, M) \simeq D_\bullet(U,M)$
as in \eqref{dee} that computes $\Coext$ (if $M$ is injective as a right $A$-module) by means of the cobar resolution using coproducts, which, as a $k$-module, is quite different from $\Hom_U(U^{\otimes_\Aopp \bullet +1}, M) \simeq C^\bullet(U,M)$, which computes $\Ext$ by means of the bar resolution using products. To circumvent this problem, one uses $k$-linear isomorphisms which transform one complex into the other and which are basically higher order Hopf-Galois maps. From a more abstract point of view,
%as for example done in \cite[\S4.7]{KowKraSle:CHAFA}
the cyclic operator arises from a distributive law between two monads, and the isomorphism from the following Lemma maps one monad into the other. Since our main goal in \S\ref{bononiae} is to obtain a chain complex on which the cochain complex $\coc^\bullet(U,A)$ acts in a natural way, it turns out to be more constructive to detect the cyclic structure on 
$$
C_\bullet(U,M) := \Hom_A(U^{\otimes_A \bullet}, M),
$$
where the $A$-linearity refers to the $A$-module structure $\due U \lact {}$ on the first tensor factor, and to connect it to the chain complex $D_\bullet(U,M)$ afterwards.

The subsequent lemma is a straightforward verification relying on Hopf-Galois yoga, that is, on the identities \eqref{Sch1}--\eqref{Tch8} which express the compatibilities of the various structure morphisms for left resp.\ right Hopf algebroids, and in particular on the mixed ones in Eqs.~\eqref{mampf1}--\eqref{mampf3}, which deal with the compatibility in case both left and right Hopf structures are present.

\begin{lemma}
  Let $(U, A)$ be both a left and a right Hopf algebroid and $M$ a left $U$-module. Then for each $n \in \N$ there is a $k$-linear isomorphism
%% \begin{small}
%%   \begin{equation*}
%%   \begin{array}{rcl}
%%     \xi: C^n(U,M) \to C_n(U,M), && \!\!\! g
%%     \mapsto \big\{(u^1|\ldots|u^n) \mapsto
%%     u^1_+g(u^1_- u^2_+, \ldots, u^{n-1}_- u^n_+ , u^n_-)
%%     \big\},
%%     \\
%%     \xi^{-1}: C_n(U,M) \to C^n(U,M), &&
%% \!\!\!
%% f \mapsto \big\{(u^1, \ldots, u^n) \mapsto u^1_{[+]} \cdots u^n_{[+]}
%% f\big(u^n_{[-]} \mancino ( \cdots     \mancino (u^2_{[-]} \mancino (u^1_{[-]}|1) |1) \cdots| 1) \big)
%% %f(v_{[-](1)}u_{[-]} \otimes_A v_{[-](2)})
%% \big\}.   
%%     \end{array}
%%   \end{equation*}
  %%   \end{small}
   \begin{small}
     \begin{eqnarray}
       \label{dies}
     \xi: C^n(U,M) &\to& C_n(U,M), \\
\nonumber
    g
    &\mapsto& \big\{(u^1|\ldots|u^n) \mapsto
    u^1_+g(u^1_- u^2_+, \ldots, u^{n-1}_- u^n_+ , u^n_-)
    \big\},
  \end{eqnarray}
   \end{small}
the inverse of which being given by
\begin{small}
  \begin{eqnarray}
    \label{das}
     \xi^{-1}: C_n(U,M) &\to& C^n(U,M), \\
\nonumber
     f &\mapsto& \big\{(u^1, \ldots, u^n) \mapsto u^1_{[+]} \cdots u^n_{[+]}
f\big(u^n_{[-]} \mancino ( \cdots     \mancino (u^2_{[-]} \mancino (u^1_{[-]}|1) |1) \cdots| 1) \big)
%f(v_{[-](1)}u_{[-]} \otimes_A v_{[-](2)})
\big\},   
     \end{eqnarray}
\end{small}
where $\mancino$ denotes the diagonal $U$-action \eqref{mancino} on $U^{\otimes_A n}$.
\end{lemma}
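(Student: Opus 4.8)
The plan is to verify three things: that $\xi$ actually lands in $C_n(U,M)$ (i.e.\ its image descends to the balanced tensor product $U^{\otimes_A n}$ and is $\lact$-linear in the first slot), that the stated $\xi^{-1}$ lands in $C^n(U,M)$, and that the two assignments are mutually inverse. The $k$-linearity of both maps is manifest from the formulas \eqref{dies} and \eqref{das}, so it does not require comment. Throughout I would argue purely by manipulating the left and right translation maps $u \mapsto u_+ \otimes_A u_-$ and $u \mapsto u_{[+]} \otimes_A u_{[-]}$, invoking the compatibility identities collected in Appendix \ref{nebula}.

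First I would settle well-definedness of $\xi$. Respecting the balancing $u^i \ract a \otimes_A u^{i+1} = u^i \otimes_A a \lact u^{i+1}$ in $U^{\otimes_A n}$ amounts to checking that inserting a source element between the $i$-th and $(i+1)$-st legs leaves $\xi(g)$ unchanged; this follows because the left translation map carries that source element across the single argument $u^i_- u^{i+1}_+$ of $g$, by the relevant identities among \eqref{Sch1}--\eqref{Tch8}, after which $\Aop$-linearity of $g$ absorbs it. The first-slot linearity $\xi(g)(s(a)u^1 | \cdots) = s(a)\,\xi(g)(u^1|\cdots)$ is then immediate from the way the left translation map behaves under left multiplication by the source, again one of the identities of Appendix \ref{nebula}. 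The analogous check for $\xi^{-1}$ --- that it descends to $U^{\otimes_\Aopp n}$ and produces an $\Aop$-linear map --- is carried out the same way, now using the right translation maps together with the behaviour of the diagonal action $\mancino$ from \eqref{mancino} under source and target.

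The core of the proof is the verification that $\xi \circ \xi^{-1} = \id$ and $\xi^{-1}\circ \xi = \id$. Here the left translation legs $(+,-)$ produced by $\xi$ collide with the right translation legs $([+],[-])$ and the nested diagonal actions $\mancino$ produced by $\xi^{-1}$, and one must show that the outer product $u^1_{[+]} \cdots u^n_{[+]}$ together with the telescoping chain of $(\cdot)_-(\cdot)_+$ factors collapses to the identity. I would organise this as an induction on $n$, peeling off either the outermost tensor slot or the innermost $\mancino$-factor at each step, so that a single application of the mixed left/right identities \eqref{mampf1}--\eqref{mampf3} (together with the coproduct compatibilities \eqref{Sch1}--\eqref{Tch8}) reduces the length-$n$ statement to the length-$(n-1)$ one.

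I expect the main obstacle to be precisely the bookkeeping of these nested $\mancino$-actions: keeping track of which coproduct leg of which $u^i_{[-]}$ acts on which later factor, and confirming that the Hopf-Galois cancellations line up slot by slot rather than merely in aggregate. Once one composite has been shown to be the identity, bijectivity of $\xi$ follows; checking the second composite (or exploiting the symmetry between the roles of the left and right Hopf structures in the two formulas) then completes the argument.
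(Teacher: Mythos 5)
Your proposal matches the paper's own treatment: the paper gives no written-out proof of this lemma, stating only that it is ``a straightforward verification relying on Hopf-Galois yoga'', namely the identities \eqref{Sch1}--\eqref{Tch8} and in particular the mixed relations \eqref{mampf1}--\eqref{mampf3}, which are exactly the ingredients (well-definedness of both maps plus the telescoping cancellation in the two composites) that you identify. Your organisation of the inverse check as an induction on $n$, peeling off one tensor slot at a time, is a sensible way to carry out what the paper leaves implicit.
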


These isomorphisms allow to obtain the structure maps of a cyclic $k$-module on $C_\bullet(U,M)$ as calculated in the next lemma, which again is achieved by computation only.

\begin{lemma}
  \label{lakritz}
Let $(U, A)$ be both a left and a right Hopf algebroid and $M$ a stable aYD contramodule over $U$. Then the cyclic dual as defined in Eqs.~\eqref{cyclicdual} intertwined by the isomorphism \eqref{dies} obtained from the cocyclic $k$-module $C^\bullet(U,M)$ with structure maps \eqref{nadennwommama} produces in degree $n \in \N$ the following morphisms 
\begin{eqnarray}
  \label{lakritz1}
%\begin{array}{rcl}
(d_i f)(u^1|\ldots|u^{n-1}) \!\!\!\!&=\!\!\!\!& \left\{\!\!\!
\begin{array}{l} 
  \gamma\big(\smap f(\smam |u^1 |\ldots |u^{n-1})\big)
\\ 
 f(u^1 |\ldots |\gD u^i |\ldots | u^{n-1})
\\
f(u^1|\ldots|u^{n-1} |1)
\end{array}\right.  
  \begin{array}{l} \mbox{if} \ i=0, \\ \mbox{if} \
  1 \leq i \leq n-1, \\ \mbox{if} \ i = n,  \end{array} 
  \\[3pt]
  \label{lakritz2}
 (s_j f)(u^1 | \ldots | u^{n+1}) \!\!\!\! 
  &=\!\!\!\!& f(u^1 | \ldots | \gve(u^{j+1}) | \ldots | u^{n+1}) \ \quad\qquad\mbox{for} \ \, 0 \leq j \leq n,
%(s_j f)(u^1 \otimes_A \cdots \otimes_A u^{n+1}) \!\!\!\! 
%  &=\!\!\!\!& f(u^1\otimes_A \cdots \otimes_A \gve(u^{j+1}) \otimes_A \cdots \otimes_A u^{n+1}) \ \, % \quad\mbox{for} \ 0 \leq j \leq n,
  \\[3pt]
  \label{lakritz3}
(t f)(u^1 | \ldots | u^n) \!\!\!\!&=\!\!\!\!& \gamma\Big(((\sma{-} u^1) \mpact
  f)\big(u^2 | \ldots | u^n | 1 \big) \Big)
\end{eqnarray}
on $C_n(U,M)$, where $\mpact$ denotes the left $U$-action on
$\Hom_A(U^{\otimes_A n}, M)$
as in $\eqref{mpaction}$, considering $U^{\otimes_A n}$ as a left $U$-module via the diagonal action \eqref{mancino}.
\end{lemma}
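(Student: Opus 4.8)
The plan is to establish the three families \eqref{lakritz1}--\eqref{lakritz3} by a direct computation that transports the \emph{abstract} cyclic-dual structure maps of $C^\bullet(U,M)$ through the isomorphism $\xi$ of \eqref{dies}. First I would spell out the cyclic dual in the sense of \eqref{cyclicdual}: on the chain-reindexed spaces, the degeneracies $\hat s_j$ are (up to the index shift fixed by \eqref{cyclicdual}) the cofaces $\gd_i$ of \eqref{nadennwommama}, the faces $\hat d_i$ are the codegeneracies $\gs_j$ of \eqref{nadennwommama} \emph{except} for the single face assembled from the cocyclic operator $\tau$, and the cyclic operator $\hat t$ is the corresponding power of $\tau$. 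I would then set $d_i := \xi \circ \hat d_i \circ \xi^{-1}$, $s_j := \xi \circ \hat s_j \circ \xi^{-1}$, and $t := \xi \circ \hat t \circ \xi^{-1}$, using the explicit inverse from \eqref{das}. Since $M$ is a stable aYD contramodule, $C^\bullet(U,M)$ is cocyclic, so its cyclic dual is a genuine cyclic $k$-module; as $\xi$ is an isomorphism, the maps $d_i,s_j,t$ automatically satisfy the cyclic relations and the entire content of the lemma is to read off their explicit shapes.

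For the simplicial maps this is essentially bookkeeping. In the degeneracies \eqref{lakritz2} and in the faces $d_i$ with $1 \le i \le n-1$ together with the last face $d_n$ of \eqref{lakritz1}, neither $\gamma$ nor $\tau$ enters; one only has to commute the right translation maps $u_{[+]} \otimes_A u_{[-]}$ introduced by $\xi^{-1}$ (applied first) and the left translation maps $u_+ \otimes_\Aopp u_-$ introduced by $\xi$ (applied last) past the product- resp.\ unit-insertions of \eqref{nadennwommama}. Here the counit and coassociativity relations among the Hopf-Galois identities \eqref{Sch1}--\eqref{Tch8} do all the work: the product-insertion cofaces of the bar complex become the counit-insertion degeneracies \eqref{lakritz2}, while the unit-insertion codegeneracies become the comultiplications $\gD u^i$ and the trailing insertion of $1$ in \eqref{lakritz1}. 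This is precisely the bar-to-cobar transposition that $\xi$ was designed to implement.

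The genuine obstacle is the remaining face $d_0$ and the cyclic operator $t$, that is, exactly the maps into which $\tau$, and hence the contraaction $\gamma$ and the $\pmact$-action, feed. Conjugating $\tau$ from \eqref{nadennwommama} by $\xi^{-1}$ and $\xi$ forces the right translation maps inside $\xi^{-1}$ and the left translation maps inside $\xi$ to interact, and the main difficulty is to show that the resulting accumulation of Sweedler legs collapses to the compact expressions $\gamma\big(\smap f(\smam |u^1 |\ldots |u^{n-1})\big)$ and $\gamma\big(((\sma{-} u^1) \mpact f)(u^2 | \ldots | u^n | 1)\big)$. I expect this to hinge on the mixed compatibility relations \eqref{mampf1}--\eqref{mampf3} between the two Hopf structures --- the only point at which being simultaneously left \emph{and} right Hopf is used --- on the aYD compatibility \eqref{nawas1} to move $\gamma$ past the $U$-action, and on the passage between the two induced actions \eqref{pmaction} and \eqref{mpaction} on $\Hom$. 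Finally, the stability \eqref{stablehalt} is what lets the leftover contraaction be recognised as the stated one and, structurally, guarantees $t^{n+1} = \id$; I would isolate this as the delicate step and carry it out keeping the translation-map notation rigorously ordered.
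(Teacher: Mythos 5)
Your proposal is correct and follows essentially the same route as the paper: conjugate the cyclic-dual maps $d_0 = \gs_{n-1}\tau$, $d_i = \gs_{i-1}$, $s_j = \gd_j$, $t = \tau^{-1}$ of \eqref{cyclicdual} by $\xi$, with the simplicial formulas following from Hopf--Galois bookkeeping and the fiddly cases $d_0$, $t$ handled exactly by the ingredients you name (the aYD condition \eqref{nawas1}, the mixed relations \eqref{mampf1}--\eqref{mampf3}, and the interplay of \eqref{pmaction} and \eqref{mpaction}). Two small adjustments when carrying it out: for $t$ one needs the closed formula for $\tau^{-1}$ (the paper quotes it from \cite[Eq.~(4.19)]{Kow:WEIABVA}) rather than a literal power of $\tau$, and stability \eqref{stablehalt} enters only to make $\tau$ invertible and $t^{n+1}=\id$ --- the collapse of the contraaction terms in $d_0$ uses \eqref{nawas1} alone.
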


That these structural maps are well-defined and in particular have the correct left $A$-linearity is in case of $d_0$ and $t$ not obvious but follows from \eqref{Tch6} together with \eqref{tamtamdatam}.

\begin{proof}[Proof of Lemma \ref{lakritz}]
  Following the mapping rule in \eqref{cyclicdual}, the claim explicitly reads:
  $$
  d_0 = \xi \circ \gs_{n-1} \tau \circ \xi^{-1}, \quad  d_i = \xi \circ \gs_{i-1} \circ \xi^{-1},
  \quad  s_j = \xi \circ \gd_j \circ \xi^{-1},  \quad  t = \xi \circ \tau^{-1} \circ \xi^{-1}
$$
  for
  $1 \leq i \leq n$ and $0 \leq j \leq n$ with respect to the operators $(\gd_j, \gs_i, \tau)$ from Eqs.~\eqref{nadennwommama}. For the simplicial part, we only show how $d_0$ is computed (as this is already fiddly enough) and leave the rest to the reader. Indeed,
  \begin{small}
\begin{eqnarray*}
 & & (d_0 f)(u^1 | \ldots | u^n) =  (\xi \circ \gs_{n-1} \tau \circ \xi^{-1} f)(u^1 | \ldots | u^n)
  \\
  &
\!\!\!\!\!\!
\overset{{\scriptscriptstyle{\eqref{dies}}}}{=}
&
\!\!\!\!\!\!
u^1_+(\gs_{n-1} \tau \circ \xi^{-1} f)(u^1_-u^2_+, \ldots, u^{n-1}_- u^n_+, u^n_-)
 \\
 &
\!\!\!\!\!\!
 \overset{{\scriptscriptstyle{\eqref{nadennwommama}}}}{=}
&
\!\!\!\!\!\!
 u^1_+(\tau \circ \xi^{-1} f)(u^1_-u^2_+, \ldots, u^{n-1}_- u^n_+, u^n_-, 1)
\\
&
\!\!\!\!\!\!
\overset{{\scriptscriptstyle{\eqref{nadennwommama}}}}{=}
&
\!\!\!\!\!\!
u^1_+\gamma\big(((u^1_{-(2)} u^2_{+(2)} \cdots u^{n-1}_{-(2)} u^n_{+(2)} u^n_{-(2)}) \pmact
\\
&&
(\xi^{-1} f))(\sma{-}, u^1_{-(1)} u^2_{+(1)}, \ldots, u^{n-1}_{-(1)} u^n_{+(1)}, u^n_{-(1)})\big)
\\
&
\!\!\!\!\!\!
\overset{{\scriptscriptstyle{\eqref{Sch5}, \eqref{Sch2}}}}{=}
&
\!\!\!\!\!\!
u^1_+\gamma\big((u^1_{-(2)} \pmact (\xi^{-1} f))(\sma{-}, u^1_{-(1)} u^2_{+}, \ldots, u^{n-1}_{-} u^n_{+}, u^n_{-})\big)
\\
&
\!\!\!\!\!\!
\overset{{\scriptscriptstyle{\eqref{mpaction}}}}{=}
&
\!\!\!\!\!\!
u^1_+\gamma\big(u^1_{-(2)+} (\xi^{-1} f)(u^1_{-(2)-}\sma{-}, u^1_{-(1)} u^2_{+}, \ldots, u^{n-1}_{-} u^n_{+}, u^n_{-})\big)
\\
&
\!\!\!\!\!\!
\overset{{\scriptscriptstyle{\eqref{nawas1}}}}{=}
&
\!\!\!\!\!\!
\gamma\big(u^1_{++(2)}u^1_{-(2)+} (\xi^{-1} f)(u^1_{-(2)-}u^1_{+-}\sma{-}u^1_{++(1)}, u^1_{-(1)} u^2_{+}, \ldots, u^{n-1}_{-} u^n_{+}, u^n_{-})\big)
\\
&
\!\!\!\!\!\!
\overset{{\scriptscriptstyle{\eqref{Sch5}, \eqref{Sch3}}}}{=}
&
\!\!\!\!\!\!
\gamma\big(u^1_{++(2)}u^1_{+-} (\xi^{-1} f)(\sma{-}u^1_{++(1)}, u^1_{-} u^2_{+}, \ldots, u^{n-1}_{-} u^n_{+}, u^n_{-})\big)
\\
&
\!\!\!\!\!\!
\overset{{\scriptscriptstyle{\eqref{Sch2}}}}{=}
&
\!\!\!\!\!\!
\gamma\big((\xi^{-1} f)(\sma{-}u^1_{+}, u^1_{-} u^2_{+}, \ldots, u^{n-1}_{-} u^n_{+}, u^n_{-})\big)
\\
&
\!\!\!\!\!\!
\overset{{\scriptscriptstyle{\eqref{das}}}}{=}
&
\!\!\!\!\!\!
\gamma\Big(\sma{-}_{[+]}u^1_{+[+]} u^1_{-[+]} \cdots u^n_{+[+]} u^n_{-[+]}
\\
&&
\qquad
f\big(u^n_{-[-]} \mancino ((u^n_{+[-]} u^{n-1}_{-[-]}) \mancino ( \cdots \mancino ((u^2_{+[-]} u^{1}_{-[-]}) \mancino (u^1_{+[-]} \sma{-}_{[-]}|1)|1) \cdots |1)\big)\Big)
\\
&
\!\!\!\!\!\!
\overset{{\scriptscriptstyle{\eqref{mampf2}, \eqref{mampf1}, \eqref{Sch7}}}}{=}
&
\!\!\!\!\!\!
\gamma\Big(\sma{-}_{[+]}\gve(u^1_{(1)[+]}) \cdots \gve(u^n_{(1)[+]})
\\
&&
\qquad
f\big(u^n_{(2)} \mancino ((u^n_{(1)[-]} u^{n-1}_{(2)}) \mancino ( \cdots \mancino ((u^2_{(1)[-]} u^{1}_{(2)}) \mancino (u^1_{(1)[-]} \sma{-}_{[-]}|1)|1) \cdots |1)\big)\Big)
\\
&
\!\!\!\!\!\!
\overset{{\scriptscriptstyle{\eqref{Tch4}, \eqref{Tch1}}}}{=}
&
\!\!\!\!\!\!
\gamma\Big(\sma{-}_{[+]}f\big(u^n_{[+]} \mancino ((u^n_{[-]} u^{n-1}_{[+]}) \mancino ( \cdots \mancino ((u^2_{[-]} u^{1}_{[+]}) \mancino (u^1_{[-]} \sma{-}_{[-]}|1)|1) \cdots |1)\big)\Big)
\\
&
\!\!\!\!\!\!
\overset{{\scriptscriptstyle{\eqref{Tch2}}}}{=}
&
\!\!\!\!\!\!
\gamma\big(\sma{-}_{[+]}f(\sma{-}_{[-]} | u^1 | \ldots | u^n )\big).
\end{eqnarray*}
\end{small}
Observe that the aYD condition \eqref{nawas1} was used in line seven and left $A$-linearity of $f$ in the penultimate line. Finally, the computation of $t$ runs along the same lines taking into consideration that the inverse of $\tau$ in \eqref{nadennwommama} is given by
  $$
(\tau^{-1} f)(u^1, \ldots, u^n) = \gamma\big(u^1_+ f(u^2_+, \ldots, u^n_+,  u^n_- \cdots  u^1_- \sma{-}) \big),
$$
see \cite{Kow:WEIABVA}, Eq.~(4.19), where it is denoted by $\tau$ due to the use of an opposite convention. 
  \end{proof}

\begin{rem}
  For later use, we also want to mention the inverse of $t$ if $M$ is a stable aYD contramodule, defined by $t^{-1} := \xi \circ \tau \circ \xi^{-1}$. A direct computation using \eqref{nadennwommama}, \eqref{dies}, and \eqref{das} yields
  \begin{equation}
    \label{kulturkapelle}
(t^{-1} f)(u^1 | \ldots | u^n) = \gamma\Big(\smadotp (u^n \mpact
  f)\big(\smadotm | u^1 | \ldots | u^{n-1}\big) \Big)
    \end{equation}
for $f \in C_n(U,M)$.
\end{rem}

\begin{theorem}
  \label{salveregina}
If $(U,A)$ is both a left and a right Hopf algebroid and $M$ a stable aYD contramodule over $U$, then the cyclic dual $\big(C_\bullet(U,M), d_\bullet, s_\bullet, t
  \big)$
defines a cyclic $k$-module the simplicial part of which induces a chain complex isomorphic to the chain complex $\big(D_\bullet(U,M), b\big)$ as in \eqref{landliebekirsch}--\eqref{dee} that computes $\Coext_\bullet^U(A,M)$ if $M$ is $A$-injective. 
  \end{theorem}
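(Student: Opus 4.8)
The statement bundles together two assertions: that $\big(C_\bullet(U,M), d_\bullet, s_\bullet, t\big)$ is a \emph{genuinely} cyclic (not merely para-cyclic) $k$-module, and that the chain complex $\big(C_\bullet(U,M), \partial\big)$ with $\partial := \sum_{i=0}^n (-1)^i d_i$ underlying its simplicial part is isomorphic to $\big(D_\bullet(U,M), b\big)$. I would treat them separately.

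For the cyclic assertion I would not verify the cyclic relations directly. By Lemma \ref{lakritz} the operators $d_i, s_j, t$ of \eqref{lakritz1}--\eqref{lakritz3} are obtained from the cocyclic $k$-module $C^\bullet(U,M)$ by applying cyclic duality in the sense of \S\ref{boing} and then conjugating degreewise by the $k$-linear isomorphism $\xi$ of \eqref{dies}. Now $C^\bullet(U,M)$ is truly cocyclic, i.e.\ $\tau^{n+1} = \id$ in degree $n$, precisely because $M$ is a \emph{stable} aYD contramodule, as recalled in \S\ref{nachher}. Since Connes' self-duality of the cyclic category sends any cocyclic object to a cyclic one, the cyclic dual of $C^\bullet(U,M)$ is already a cyclic $k$-module, and transporting its structure maps along the degreewise isomorphism $\xi$ preserves all the defining identities. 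Hence $\big(C_\bullet(U,M), d_\bullet, s_\bullet, t\big)$ is cyclic, with the explicit structure maps as displayed.

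For the isomorphism of complexes I would compare the faces $d_i$ of \eqref{lakritz1} with the faces $b_i$ of the $\Coext$ complex in \eqref{landliebekirsch}, specialised to $P=A$ with its target coaction. The middle coproduct faces coincide in form, while the two genuinely non-simplicial faces sit at opposite ends: $d_0$ applies the contraaction to the \emph{first} tensor slot (after a right Hopf twist by $u \mapsto u_{[+]} \otimes_\ahha u_{[-]}$) and $d_n$ inserts $1$ in the \emph{last} slot, whereas $b_n$ applies the plain contraaction to the \emph{last} slot and $b_0$ inserts $1$ in the \emph{first}. This dictates the shape of the map I would construct: an explicit degreewise $k$-linear isomorphism
$$
\Phi\colon C_\bullet(U,M) = \Hom_A(U^{\otimes_A \bullet}, M) \;\longrightarrow\; \Hom_\Aopp(U^{\otimes_A \bullet}, M) = D_\bullet(U,M)
$$
that reverses the order of the $n$ tensor factors and, via the translation maps $u \mapsto u_+ \otimes_\Aopp u_-$ and $u \mapsto u_{[+]} \otimes_\ahha u_{[-]}$, converts the $\due U \lact {}$-linearity into $\due U {} \ract$-linearity; this is exactly the higher-order Hopf-Galois map announced before Lemma \ref{lakritz}. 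Using the Hopf-Galois identities \eqref{Sch1}--\eqref{Tch8}, and in particular the mixed ones \eqref{mampf1}--\eqref{mampf3}, one then checks that $\Phi$ intertwines $d_i$ with $b_{n-i}$ for every $i$; after rescaling $\Phi$ by the standard degree-dependent sign that turns an order reversal into a chain map, this gives $\Phi \circ \partial = b \circ \Phi$, so $\Phi$ is an isomorphism of chain complexes. Since $\big(D_\bullet(U,M), b\big)$ computes $\Coext^U_\bullet(A,M)$ whenever $M$ is injective as a right $A$-module, by the cobar computation of \S\ref{coext}, the same conclusion transfers to $\big(C_\bullet(U,M), \partial\big)$.

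The main obstacle will be the second part: writing $\Phi$ down correctly and verifying the intertwining of the two non-simplicial faces, $d_0 \leftrightarrow b_n$ and $d_n \leftrightarrow b_0$, where the reversal forces the left and right translation maps to interact and the aYD compatibility \eqref{nawas1} to be invoked, exactly in the style of the explicit computation of $d_0$ already carried out in the proof of Lemma \ref{lakritz}. The coproduct faces and the degeneracies, by contrast, should be intertwined essentially formally once $\Phi$ has been fixed, and the cyclic part requires no separate computation at all.
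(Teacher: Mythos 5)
Your first assertion is unproblematic and is exactly the paper's own argument: cyclicity is a tautological consequence of Connes' self-duality together with the fact that $\xi$ in \eqref{dies} is a degreewise $k$-module isomorphism, stability of $M$ entering only through the cocyclicity of $C^\bullet(U,M)$.

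The second half, however, has a genuine gap, and it sits precisely at the step you dismiss as ``essentially formal''. An order-reversal map runs into two obstructions in this generality. First, the flip $u^1 \otimes_A \cdots \otimes_A u^n \mapsto u^n \otimes_A \cdots \otimes_A u^1$ is not even well defined on ${{}_\lact U_\ract}^{\otimes_A n}$ over a noncommutative base: the defining relation $u \ract a \otimes_A v = u \otimes_A a \lact v$ does not reverse. Second, and more seriously, reversal turns ``apply $\gD$ in slot $i$'' into ``apply the \emph{flipped} coproduct in slot $n-i$'', so any slot-wise twist $\theta$ built into your $\Phi$ must satisfy $\theta(u)_{(1)} \otimes \theta(u)_{(2)} = \theta(u_{(2)}) \otimes \theta(u_{(1)})$, i.e.\ be an anti-coalgebra morphism. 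For a Hopf algebra the antipode does exactly this, but the paper stresses (\S\ref{schau}) that a left-and-right Hopf algebroid carries no antipode in general; the translation maps $u \mapsto u_+ \otimes_\Aopp u_-$ and $u \mapsto u_{[+]} \otimes_A u_{[-]}$ take values in $U \otimes U$ rather than $U$ and provide no such morphism, so the middle faces do \emph{not} intertwine formally --- this is where your argument breaks, not the end faces. (Also, the ``higher-order Hopf-Galois maps'' announced before Lemma \ref{lakritz} are $\xi, \xi^{-1}$, which mediate between $C^\bullet(U,M)$ and $C_\bullet(U,M)$; they are not order reversals and not the map you need.) The paper's proof avoids reversal altogether: it sets $\eta := \zeta \circ t$, where $(\zeta f)(u^1|\ldots|u^n) = (u^n \mpact f)(1|u^1|\ldots|u^{n-1})$ is a cyclic-shift-type isomorphism and $t$ the cyclic operator, and proves $\eta \circ d_i = b_i \circ \eta$ for the \emph{same} index $i$ (hence no sign rescaling), via the closed formula $(\eta^{-1}g)(u^1|\ldots|u^n) = \gamma\big(g(\sma{\cdot} \mancino (u^1|\ldots|u^n))\big)$; the only hard verification is $i=0$, where the aYD condition \eqref{nawas1} and stability \eqref{stablehalt} are used. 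To salvage a reversal-type argument you would need either cocommutativity (as in \S\ref{nirvana}, where flipping coproduct legs is harmless) or an antipode-substitute with the anti-coalgebra property, neither of which is available under the theorem's hypotheses.
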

\begin{proof}
  The first statement is a tautological consequence of how cyclic duals are constructed (the cyclic dual of a cocyclic module being a cyclic module) along with the fact that the maps \eqref{dies} and \eqref{das} are isomorphisms of $k$-modules.

  As for the second part, we want to show that  $(C_\bullet(U,M), d) \simeq (D_\bullet(U,M), b)$ as chain complexes, where $d = \sum_{i=0}^n (-1)^n d_i$ in degree $n$ for the faces in Eqs.~\eqref{lakritz1}. To this end, 
  consider first the following $k$-linear isomorphism
     \begin{small}
     \begin{eqnarray}
\nonumber
       \zeta: \Hom_A(U^{\otimes_A n}, M) &\to&  \Hom_\Aopp(U^{\otimes_A n}, M), \\
\nonumber
    f
    \mapsto \big\{(u^1|\ldots|u^n) &\mapsto&
    (u^n \mpact f)(1|u^1| \ldots| u^{n-1})
    \big\},
    \\
  \label{tat}
    \big\{ (u^1 \pmact g)(u^2| \ldots| u^n|1) &\mapsfrom& (u^1|\ldots|u^n) \} \mapsfrom g.
     %    u^1_+g(u^1_- \mancino (u^2| \ldots| u^n|1)) 
     \end{eqnarray}
\end{small}
%
%%   \begin{small}
%%      \begin{eqnarray*}
%%      %  \label{tis}
%%        \zeta: \Hom_A(U^{\otimes_A n}, M) &\to&  \Hom_\Aopp(U^{\otimes_A n}, M), \\
%% \nonumber
%%     f
%%     &\mapsto& \big\{(u^1|\ldots|u^n) \mapsto
%%     (u^n \mpact f)(1|u^1| \ldots| u^{n-1})
%%     \big\},
%%   \end{eqnarray*}
%%    \end{small}
%% with inverse
%% \begin{small}
%%   \begin{eqnarray}
%%     \label{tat}
%%      \zeta^{-1}: \Hom_\Aopp(U^{\otimes_A n}, M) &\to&  \Hom_A(U^{\otimes_A n}, M), \\
%% \nonumber
%%      g
%%     &\mapsto& \big\{(u^1|\ldots|u^n) \mapsto
%%          (u^1 \pmact g)(u^2| \ldots| u^n|1)
%%      %    u^1_+g(u^1_- \mancino (u^2| \ldots| u^n|1))
%%     \big\}. 
%%      \end{eqnarray}
%% \end{small}
%
     Coupling then this isomorphism with the cyclic operator $t$, which is an isomorphism as well if $M$ is a stable aYD module --- with inverse quoted in Eq.~\eqref{kulturkapelle} --- does the job; that is, defining $\eta := \zeta \circ t$, we obtain an isomorphism $C_\bullet(U,M) \simeq D_\bullet(U,M)$ with the property that
$
\eta \circ d_i = b_i \circ \eta 
$
for all faces, that is, for all $0 \leq i \leq n$ in degree $n$. We only show this for $i =0$ which is the most intricate case, and leave the rest to the reader. On the other hand, it turns out to be more convenient working with the inverse, and we therefore compute $\eta^{-1}$ first: for $g \in \Hom_\Aopp(U^{\otimes_A n}, M)$, we have
\begin{small}
\begin{eqnarray*}
  & &
\!\!\!\!\!\!\!\!\!\!\!\!\!\!\!\!\!\!\!\!\!\!\!\!
  (\eta^{-1} g)(u^1|\ldots|u^n) =  (t^{-1} \zeta^{-1} g)(u^1|\ldots|u^n)
  \\
  &
\!\!\!\!\!\!
\overset{{\scriptscriptstyle{\eqref{kulturkapelle}}}}{=}
&
\!\!\!\!\!\!
\gamma\Big(\smadotp (u^n \mpact (\zeta^{-1}g))(\smadotm|u^1|\ldots|u^{n-1}) \Big)
 \\
 &
\!\!\!\!\!\!
 \overset{{\scriptscriptstyle{\eqref{Tch5}}}}{=}
&
\!\!\!\!\!\!
\gamma\Big(\smadotp u^n_{[+]} \big(\zeta^{-1}g\big)\big(u^n_{[+][-]}\smadotm| u^n_{[-]} \mancino(u^1|\ldots|u^{n-1})\big) \Big)
\\
&
\!\!\!\!\!\!
\overset{{\scriptscriptstyle{\eqref{tat}}}}{=}
&
\!\!\!\!\!\!
\gamma\Big(\smadotp u^n_{[+][+]} \big((u^n_{[+][-]}\smadotm) \pmact g\big)\big(u^n_{[-]} \mancino(u^1|\ldots|u^{n-1})|1\big) \Big)
\\
&
\!\!\!\!\!\!
\overset{{\scriptscriptstyle{\eqref{pmaction}}}}{=}
&
\!\!\!\!\!\!
\gamma\Big(\smadotp u^n_{[+][+]}u^n_{[+][-]+}(\smadotm \pmact g\big)\big((u^n_{[+][-]-(1)}u^n_{[-]}) \mancino(u^1|\ldots|u^{n-1})| u^n_{[+][-]-(2)}\big) \Big)
\\
&
\!\!\!\!\!\!
\overset{{\scriptscriptstyle{\eqref{mampf3}, \eqref{Tch7}}}}{=}
&
\!\!\!\!\!\!
\gamma\Big(\smadotp (\smadotm \pmact g\big)\big((u^n_{[+](1)}u^n_{[-]}) \mancino(u^1|\ldots|u^{n-1})| u^n_{[+](2)}\big) \Big)
\\
&
\!\!\!\!\!\!
\overset{{\scriptscriptstyle{\eqref{Tch2}, \eqref{mpaction}}}}{=}
&
\!\!\!\!\!\!
\gamma\Big(\sma{\cdot}_{{\scriptscriptstyle [+]}} \sma{\cdot}_{\scriptscriptstyle [-]+} g\big(\sma{\cdot}_{\scriptscriptstyle [-]-} \mancino(u^1|\ldots| u^n) \big) \Big)
\\
&
\!\!\!\!\!\!
\overset{{\scriptscriptstyle{\eqref{mampf3}, \eqref{Tch7}}}}{=}
&
\!\!\!\!\!\!
\gamma\Big(g\big(\sma{\cdot} \mancino(u^1|\ldots| u^n) \big) \Big),
\end{eqnarray*}
\end{small}
using right $A$-linearity of $g$ in line six and eight. With this, we compute on one side
\begin{small}
\begin{equation*}
 % \begin{split}
  (\eta^{-1} b_0 g)(u^1|\ldots|u^{n-1})
  = \gamma\Big((b_0g)\big(\sma{\cdot} \mancino(u^1|\ldots| u^{n-1}) \big) \Big)
= \gamma\Big(g\big(1|\sma{\cdot} \mancino(u^1|\ldots| u^{n-1}) \big) \Big),
%\end{split}
\end{equation*}
\end{small}
using $b_0$ from \eqref{landliebekirsch} for $N = A$, and on the other side, by means of $d_0$ from \eqref{lakritz1},
\begin{small}
\begin{eqnarray*}
  & &
\!\!\!\!\!\!\!\!\!\!\!\!
  (d_0 \eta^{-1} g)(u^1|\ldots|u^{n-1})
  \\
  &
\!\!\!\!\!\!
\overset{{\scriptscriptstyle{\eqref{lakritz1}}}}{=}
&
\!\!\!\!\!\!
\gamma\Big(\smadotp (\eta^{-1} g)(\smadotm|u^1|\ldots|u^{n-1}) \Big)
 \\
 &
\!\!\!\!\!\!
 \overset{{\scriptscriptstyle{}}}{=}
&
\!\!\!\!\!\!
\dot\gamma\Big(\smadotp \ddot\gamma\big(g(\sma{\cdot\cdot} \mancino (\smadotm|u^1|\ldots|u^{n-1}))\big) \Big)
\\
&
\!\!\!\!\!\!
\overset{{\scriptscriptstyle{\eqref{nawas1}}}}{=}
&
\!\!\!\!\!\!
\dot\gamma\Big(\ddot\gamma\big(\sma{\cdot}_{\scriptscriptstyle [+]+(2)} g((\sma{\cdot}_{\scriptscriptstyle [+]-} \sma{\cdot\cdot}\sma{\cdot}_{\scriptscriptstyle [+]+(1)} ) \mancino (\smadotm|u^1|\ldots|u^{n-1}))\big) \Big)
\\
&
\!\!\!\!\!\!
\overset{{\scriptscriptstyle{\eqref{carrefour1}, \eqref{mampf1}}}}{=}
&
\!\!\!\!\!\!
\gamma\Big(\sma{\cdot}_{\scriptscriptstyle (1)+[+](2)} g\big((\sma{\cdot}_{\scriptscriptstyle (1)-} \sma{\cdot}_{\scriptscriptstyle (2)}\sma{\cdot}_{\scriptscriptstyle (1)+[+](1)} ) \mancino (\sma{\cdot}_{\scriptscriptstyle (1)+[-]}|u^1|\ldots|u^{n-1})\big) \Big)
\\
&
\!\!\!\!\!\!
\overset{{\scriptscriptstyle{\eqref{Sch3}}}}{=}
&
\!\!\!\!\!\!
\gamma\Big(\sma{\cdot}_{\scriptscriptstyle [+](2)} g\big(\sma{\cdot}_{\scriptscriptstyle [+](1)}
\mancino (\sma{\cdot}_{\scriptscriptstyle [-]}|u^1|\ldots|u^{n-1})\big) \Big)
\\
&
\!\!\!\!\!\!
\overset{{\scriptscriptstyle{\eqref{Tch2}}}}{=}
&
\!\!\!\!\!\!
\gamma\Big(\sma{\cdot}_{\scriptscriptstyle (2)} g\big(1|\sma{\cdot}_{\scriptscriptstyle (1)} \mancino (u^1|\ldots|u^{n-1})\big) \Big)
\\
&
\!\!\!\!\!\!
\overset{{\scriptscriptstyle{\eqref{carrefour1}}}}{=}
&
\!\!\!\!\!\!
\dot\gamma\Big(\ddot\gamma\big(\sma{\cdot\cdot} g(1|\sma{\cdot} \mancino (u^1|\ldots|u^{n-1}))\big) \Big)
\\
&
\!\!\!\!\!\!
\overset{{\scriptscriptstyle{\eqref{stablehalt}}}}{=}
&
\!\!\!\!\!\!
\gamma\Big(g\big(1|\sma{\cdot} \mancino (u^1|\ldots|u^{n-1})\big) \Big)
\\
&
\!\!\!\!\!\!
\overset{{\scriptscriptstyle{}}}{=}
&
\!\!\!\!\!\!
(\eta^{-1} b_0 g)(u^1|\ldots|u^{n-1}),
\end{eqnarray*}
\end{small}
where we used the aYD property in line four and stability in line nine.
Verifying analogous identities for all faces, we altogether obtain $\eta \circ d = b \circ \eta$ and
hence, $\eta: (C_\bullet(U,M), d) \stackrel{\simeq}{\longrightarrow} (D_\bullet(U,M), b)$ gives the desired isomorphism of chain complexes.
\end{proof}

\section{\for{toc}{Chain complexes as cyclic opposite modules over cochain complexes}\except{toc}{The complex computing $\Coext$ as a cyclic opposite module over the complex computing $\Cotor$}}

%\section{Chain complexes as cyclic opposite modules over cochain complexes}

\label{bononiae}

In this section, we advance to the core of the article by defining the structure of a cyclic unital opposite module on the chain complex computing $\Coext$ over the operad with multiplication given by the cochain complex computing $\Cotor$ in a sense we briefly recall in \S\ref{dauerregen}. This, by means of  Theorem \ref{terzamissione}, induces a noncommutative calculus (or BV module) up to homotopy in the sense of \S\ref{esregnetdurchdieDecke}, as will be discussed in the next section.

Since we would like to see from now on the cochain resp.\ chain spaces just mentioned from a more operadic point of view, we change the notation and set for $n, p \in \N$
\begin{equation}
  \label{maisondumonde}
\cO(p) := \coc^p(U,A), \qquad \cM(n) :=  C_n(U, M), 
\end{equation}
where $(U,A)$ for the time being is only a left bialgebroid and $M$ a right $U$-contramodule.
The operadic structure of $\cO$ was explicitly described in \cite[Eqs.~(3.3)--(3.5)]{Kow:BVASOCAPB} for general coefficients (more precisely, with coefficients being (braided) commutative monoids in the braided
category of Yetter-Drinfel'd modules, see there). Here, we will only deal with the case of coefficients in the base algebra $A$ but a more general approach would also be possible without too much additional effort.

The operadic structure on $\cO$ is defined by the partial composition maps
$
\circ_i : \cO(p) \otimes  \cO(q) \rightarrow 
         \cO(p+q-1) %\quad i = 1, \ldots, p,
$
         given as
\begin{equation}
\label{maxdudler2}
\begin{split}
(u^1| \ldots| u^p) \circ_i (v^1| \ldots| v^q) := (u^1 | \ldots| u^{i-1}| u^{i} \mancino (v^1| \ldots |v^q) | u^{i+1} | \ldots | u^p)
\end{split}
\end{equation}
for all $1 \leq i \leq p$, where $\mancino$ as always denotes the diagonal action \eqref{mancino} %reflecting the monoidal structure on $\umod$
given by the $(q-1)$-fold iterated coproduct $\gD^{q-1}$ on elements of degree $q$. For $q =1$, set $\gD^0 = \id_U$ and $\mancino$ becomes the multiplication in $U$, whereas
for $q= 0$, that is, an element in $\cO(0) = A$, set $\gD^{-1} = \gve$, the counit of $U$.
In particular, $\cO$ is an operad with multiplication (see \S\ref{pamukkale1}), the multiplication, the identity, and the unit being given by
\begin{equation}
\label{soschlapp}
  (\mu, \mathbb{1}, e) := \big((1_U | 1_U), 1_U, 1_A\big). 
\end{equation}

In view of what we are aiming at let us already mention at this point that this structure on the cochains by a well-known theorem directly implies that the cohomology $\Cotor^\bullet_U(A,A)$ is a Gerstenhaber algebra, see again \S\ref{pamukkale1}.

As far as the opposite $\cO$-module structure on $\cM$ is concerned, define for all $i = 1, \ldots, n-p+1$ and $0 \leq p \leq n$,
the operation
\begin{equation}
  \label{viatoledo1}
  \begin{array}{rcl}
    \bullet_i: \cO(p) \otimes \cM(n) &\to& \cM(n-p+1), \\
    w \otimes f &\mapsto&
  f\big(\sma{-}^1 | \ldots | \sma{-}^{i-1} | 
\sma{-}^i \mancino w |
\sma{-}^{i+1} | \ldots | \sma{-}^{n}\big),
\end{array}
  \end{equation}
declared to be zero if $p > n$.
%, and where $\mancino$ denotes, as in \eqref{mancino}, the diagonal action reflecting the monoidal structure on $\umod$.
%
Explicitly, this has to be read as follows: let $w = (u^1 | \ldots | u^p) \in U^{\otimes_A p}$ and $f \in \Hom_A(U^{\otimes_A n},M)$. Then 
\begin{small}
$$
  \big((u^1 | \ldots | u^p) \bullet_i f\big)(v^1 | \ldots | v^{n-p+1})
  := f\big(v^1 | \ldots | v^{i-1} | v^i_{(1)} u^1 | \ldots |v^i_{(p)} u^p |
v^{i+1} | \ldots | v^{n-p+1}\big)
$$
\end{small}
for all $i = 1, \ldots, n-p+1$ and $(v^1 | \ldots | v^{n-p+1}) \in U^{\otimes_A n-p+1}$, and
where $(v^i_{(1)}| \ldots |v^i_{(p)})$ denotes the $(p-1)$-fold iterated coproduct $\gD^{p-1}(v^i)$.
Again, for an element in $\cO(0) = A$
acting on $f$, one sets $\gD^{-1} = \gve$ as above, and hence
\begin{equation}
  \label{viatoledo2}
\big(a \bullet_i f\big)(v^1 | \ldots | v^{n+1}) := f\big(v^1 | \ldots | v^{i-1} | \gve(v^i \bract a) |
v^{i+1} | \ldots | v^{n+1}\big)
\end{equation}
for $a \in A$ and $i = 1, \ldots, n+1$.

 \begin{lemma}
The operations \eqref{viatoledo1} induce on $\cM$ the structure of a unital opposite $\cO$-module. 
   \end{lemma}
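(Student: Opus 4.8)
The plan is to verify directly that the operations $\bullet_i$ satisfy the defining axioms of a unital opposite $\cO$-module as recalled in \S\ref{dauerregen}. Besides the unit axiom $\mathbb{1} \bullet_i f = f$ for $\mathbb{1} = 1_U \in \cO(1)$, these amount to the associativity constraints governing two successive actions $\phi \bullet_i(\psi \bullet_j f)$ for $\phi \in \cO(p)$, $\psi \in \cO(q)$, and $f \in \cM(n)$, which take one of three forms according to whether the blocks of arguments affected by the two insertions are disjoint or nested:
\begin{align*}
  \phi \bullet_i (\psi \bullet_j f) &= \psi \bullet_{j-p+1}(\phi \bullet_i f)
  && \text{if } 1 \le i \le j-p, \\
  \phi \bullet_i (\psi \bullet_j f) &= \psi \bullet_{j}(\phi \bullet_{i+q-1} f)
  && \text{if } i > j, \\
  \phi \bullet_i (\psi \bullet_j f) &= (\phi \circ_{j-i+1} \psi) \bullet_i f
  && \text{if } j-p+1 \le i \le j,
\end{align*}
the last being the nested case, in which $\psi$ lands inside the block produced by $\phi$. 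The unit axiom is immediate: for $\mathbb{1} = 1_U$ one has $p=1$, $\gD^0 = \id$, and $v^i_{(1)} \cdot 1_U = v^i$, so $\mathbb{1}\bullet_i f = f$ by inspection of \eqref{viatoledo1}.

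For the two disjoint cases the verification is pure bookkeeping: expanding both sides through \eqref{viatoledo1}, the two insertions act on mutually disjoint tuples of arguments, so the iterated coproducts never interfere and the identities reduce to matching indices after accounting for the arity drops $p-1$ and $q-1$. No coalgebra identity beyond the independence of distinct slots is needed here; the only point requiring (routine) attention is that the degenerate instances $p=0$ or $q=0$, where $\gD^{-1} = \gve$ and the action is \eqref{viatoledo2}, remain consistent, which follows from the counit being compatible with the source and target maps in \eqref{pergolesi}.

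The crux is the nested relation. Writing $\phi = (u^1|\ldots|u^p)$ and $\psi = (u'^1|\ldots|u'^q)$ and denoting the arguments of the outer expression by $x^1|\ldots|x^{n-p-q+2}$, the outer insertion $\phi\bullet_i$ replaces the single argument $x^i$ by the factors $x^i_{(m)}u^m$, and the subsequent inner insertion $\psi\bullet_j$ then applies $\gD^{q-1}$ to the head factor $x^i_{(r)}u^r$, where $r := j-i+1$. Two ingredients make the result collapse to $(\phi \circ_{r}\psi)\bullet_i f$. First, coassociativity of $\gD$ merges the $(p-1)$-fold and the subsequent $(q-1)$-fold iterated coproducts into the single $(p+q-2)$-fold coproduct of $x^i$ that appears on the right-hand side. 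Second, the fact that $\gD\colon U \to U \times_A U$ is an algebra homomorphism into the Sweedler--Takeuchi product (\S\ref{bialgebroids}) gives $\gD^{q-1}(x^i_{(r)}u^r) = \gD^{q-1}(x^i_{(r)})\,\gD^{q-1}(u^r)$ componentwise, which is exactly what turns the nested factors into the factors $x^i_{(\bullet)}(u^r_{(s)}u'^{s})$ prescribed by the diagonal action $u^r \mancino \psi$ in the partial composition \eqref{maxdudler2}. Matching these against the definition of $\phi\circ_{r}\psi$ yields the claim; the small case $p=q=2$ already exhibits the whole mechanism.

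I expect the nested identity to be the only genuine obstacle: the disjoint cases and unitality are essentially formal, whereas the nested case is where the interplay between the operadic composition $\circ_i$ and the diagonal action $\mancino$ is actually used, and where one must track the iterated coproducts carefully through the $A$-balanced tensor notation of \eqref{samsung1}. Well-definedness of every expression over the relevant tensor products over $A$ is inherited from the corresponding property of $\cO$ and of \eqref{viatoledo1}, since all manipulations stay within the Takeuchi product and respect the four $A$-module structures of \eqref{pergolesi}.
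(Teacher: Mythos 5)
Your proof is correct and takes essentially the same route as the paper: the paper's own proof omits the verification entirely, remarking only that it ``hinges essentially on coassociativity along with the compatibility between product and coproduct,'' which are precisely the two ingredients you isolate and work out in the nested case. Your statement of the three relations \eqref{TchlesischeStr}, the unit check, and the bookkeeping in the disjoint cases all agree with what the paper leaves implicit.
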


 \begin{proof}
   By definition, we have to check the identities \eqref{TchlesischeStr} for the operations \eqref{viatoledo1} and the operadic structure on $\cO$ spelled out in \eqref{maxdudler2} above. %\cite[Eqs.~(3.3)--(3.5)]{Kow:BVASOCAPB}.
   This is an obvious verification that hinges essentially on coassociativity along with the compatibility between product and coproduct, which is why we omit it.
   \end{proof}

 A possible Hopf structure on a left bialgebroid $(U,A)$ comes into play when one wants to promote this opposite module into a cyclic one, in particular when adding the extra operation $\bullet_0$. To this end, let $U$ be both a left and right Hopf algebroid, and set
\begin{equation}
   \label{extraop}
(u^1 | \ldots | u^p) \bullet_0 f :=
\dot\gamma\big(\smadotp (u^1 \mpact f)(\smadotm \mancino (u^2 | \ldots | u^p) | \sma{-}^1 | \ldots | \sma{-}^{n-p+1})\big),
   \end{equation}
for $f \!\in\! \cM(n)$, declared to be zero this time if $p > n + 1$.
That this is a well-defined expression, indeed, is possibly not obvious at first sight, but follows from $A$-linearity of the coproduct as well as Eqs.~\eqref{hatschi}, \eqref{Tch1}, \eqref{Tch6}, and \eqref{Tch9}.
Explicitly on an element 
$(v^1 | \ldots | v^{n-p+1}) \in  U^{\otimes_A n-p+1}$ and all short-hand notations written out, this reads as
\begin{small}
\begin{equation*}
  \begin{split}
    \big( & (u^1 | \ldots | u^p) \bullet_0 f\big)(v^1 | \ldots | v^{n-p+1}) \\
 &=
\gamma\big({\sma{\cdot}_{[+]}} u^1_{[+]} f( u^1_{[-](1)} {\sma{\cdot}_{[-](1)}} u^2 | \ldots | u^1_{[-](p-1)} {\sma{\cdot}_{[-](p-1)}} u^p | u^1_{[-](p)} v^1 | \ldots | u^1_{[-](n)} v^{n-p+1})\big).
\end{split}
  \end{equation*}
\end{small}
Then, along with the cyclic operator 
$$
(t f)(v^1| \ldots | v^n) = \gamma\Big(((\sma{\cdot} v^1) \mpact f)\big(v^2| \ldots| v^n| 1 \big) \Big)
$$
from \eqref{lakritz3}, we can turn the opposite $\cO$-module $\cM$ into a cyclic one:

\begin{theorem}
  \label{plebiscito}
If $M$ is a stable aYD contramodule over a left bialgebroid $(U,A)$ which is both left and right Hopf, the extra operation \eqref{extraop} turns $(\cM, t)$ into a cyclic unital opposite module over the operad with multiplication $(\cO, \mu, e)$, the underlying cyclic $k$-module structure of which coincides with the one of the cyclic dual of $C^\bullet(U,M)$ given in Eqs.~\eqref{lakritz1}--\eqref{lakritz3}.
 \end{theorem}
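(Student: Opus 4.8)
The plan is to reduce everything to the general characterisation of cyclic unital opposite modules over an operad with multiplication recalled in \S\ref{dauerregen} and quoted in Theorem \ref{terzamissione}. Such a structure consists of the opposite module axioms \eqref{TchlesischeStr} for the partial actions $\bullet_i$ with $i \geq 1$, supplemented by the cyclicity constraint $t^{n+1} = \id_{\cM(n)}$ and by the compatibility relations that tie the cyclic operator $t$ from \eqref{lakritz3}, the extra operation $\bullet_0$ from \eqref{extraop}, and the $\bullet_i$ together. The first block of axioms was already verified in the preceding lemma, so only the cyclic part is at stake here.

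First I would dispose of cyclicity at no cost: by Theorem \ref{salveregina} the maps \eqref{lakritz1}--\eqref{lakritz3} already form a bona fide cyclic $k$-module, and $t$ of \eqref{lakritz3} is exactly its cyclic operator, whence $t^{n+1} = \id$. What genuinely remains is to check how $t$ interacts with $\bullet_0$ and with a general $\phi \in \cO(p)$ acting through $\bullet_i$; this is not a formal consequence of the cyclic $k$-module structure, since the latter only sees the multiplication $\mu$ and the unit $e$, whereas the operadic relations must hold in every arity. I would establish these relations by the same Hopf-Galois yoga as in the proof of Lemma \ref{lakritz}: expand both sides through the translation maps $u_\pm$ and $u_{[\pm]}$ using the identities \eqref{Sch1}--\eqref{Tch8} and the mixed ones \eqref{mampf1}--\eqref{mampf3}, then merge the nested contraactions with contraassociativity \eqref{carrefour1} and collapse them via the aYD relation \eqref{nawas1} and stability \eqref{stablehalt}; well-definedness with respect to the ambient $A$-actions is already secured by \eqref{hatschi}, \eqref{Tch1}, \eqref{Tch6}, and \eqref{Tch9}.

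With the axioms in place, it remains to identify the cyclic $k$-module that the operadic data produce with \eqref{lakritz1}--\eqref{lakritz3}. Feeding $\mu = (1_U|1_U)$ into \eqref{viatoledo1} gives, for $1 \leq i \leq n-1$, the face $(\mu \bullet_i f)(v^1|\ldots|v^{n-1}) = f(v^1|\ldots|v^i_{(1)}|v^i_{(2)}|\ldots) = f(\ldots|\gD v^i|\ldots) = d_i f$; feeding $\mu$ into \eqref{extraop} and using $1_U \mpact f = f$ together with $\smadotm \mancino 1_U = \smadotm$ produces $\mu \bullet_0 f = \gamma\big(\smadotp f(\smadotm|v^1|\ldots|v^{n-1})\big) = d_0 f$; and $e = 1_A$ fed into \eqref{viatoledo2} yields $e \bullet_{j+1} f = f(\ldots|\gve(u^{j+1})|\ldots) = s_j f$ for $0 \leq j \leq n$. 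The cyclic operator is $t$ of \eqref{lakritz3} by construction, and the last face $d_n f = f(u^1|\ldots|u^{n-1}|1)$ is then forced by the cyclic identities that Theorem \ref{salveregina} guarantees among these maps. Hence the underlying cyclic $k$-module is precisely the one of Lemma \ref{lakritz}.

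The main obstacle is the compatibility of $t$ with $\bullet_0$: since $\bullet_0$ folds a contraaction into the leftmost slot while $t$ simultaneously rotates the arguments and applies $\gamma$, one must reconcile two nested contraactions, which forces the combined use of \eqref{carrefour1}, \eqref{nawas1}, and \eqref{stablehalt} while threading the translation maps through iterated coproducts. This is bookkeeping of exactly the same flavour as the computation of $d_0$ in Lemma \ref{lakritz} and of $\eta^{-1}$ in Theorem \ref{salveregina}, and is no harder in principle, but it is where the real labour of the proof concentrates.
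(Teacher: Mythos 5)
Your proposal is correct and structurally mirrors the paper's proof: both delegate the $\bullet_i$, $i \geq 1$, axioms to the preceding lemma, both take $t^{n+1} = \id$ for free from the cyclic-dual construction, both identify $\mu \bullet_i$, $\mu \bullet_0$ and $e \bullet_{j+1}$ with the maps of Lemma \ref{lakritz} by direct substitution, and both locate the real labour in the relation \eqref{lagrandebellezza1} (which the paper, like you, only carries out for $i=0$, using exactly the toolkit you list: the translation-map identities \eqref{Sch1}--\eqref{mampf3}, contraassociativity \eqref{carrefour1}, the aYD condition \eqref{nawas1}, and stability \eqref{stablehalt}). The one genuine divergence is the last face: the paper proves $\mu \bullet_0 (tf) = f(v^1|\ldots|v^{n-1}|1)$ by a second page-long explicit Hopf-Galois computation, whereas you obtain it formally from the already-established identity $\mu \bullet_0 = d_0$ together with the cyclic-module relation $d_0 \circ t = d_n$, which Theorem \ref{salveregina} guarantees holds among the maps \eqref{lakritz1}--\eqref{lakritz3}. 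This shortcut is legitimate --- it uses only the fact that the cyclic dual is a genuine cyclic $k$-module, not the operadic axioms themselves, so there is no circularity --- and it eliminates one of the paper's two long computations at no cost. What your route buys is economy; what the paper's explicit computation buys is an independent consistency check on the formula \eqref{extraop} that does not lean on Theorem \ref{salveregina}.
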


\begin{proof}
We first prove the second statement regarding the cyclic $k$-module structure. As the cyclic operator coincides by construction, we only need to check that the simplicial structure defined in Eqs.~\eqref{lakritz1}--\eqref{lakritz2} coincides with the one originating from being a cyclic opposite $\cO$-module by the general construction in Eqs.~\eqref{colleoppio}: with $\mu = (1_U | 1_U)$, one immediately sees from \eqref{extraop} and \eqref{viatoledo1} that
 \begin{eqnarray}
\label{schlappschlappschlapp}
   (\mu \bullet_0 f)(v^1 |\ldots |v^{n-1}) 
&=& \gamma\big(\smadotp f(\smadotm |v^1 |\ldots |v^{n-1})\big) \\
\nonumber
(\mu \bullet_i f)(v^1 |\ldots |v^{n-1}) &=& f(v^1 |\ldots |\gD v^i |\ldots |v^{n-1}), \qquad 
     %\\
 %(\mu \bullet_0 t f)(u^1 |\ldots |u^n) &=& f(u^1 |\ldots |u^n |1)
    \end{eqnarray}
for $i = 1, \ldots, n$,
which are the first two lines in Eqs.~\eqref{lakritz1}; as for the last face, compute
\begin{small}
\begin{eqnarray*}
 & &(\mu \bullet_0 tf)(v^1 |\ldots |v^{n-1})
  \\
  &
\!\!\!\!\!\!
\overset{{\scriptscriptstyle{\eqref{schlappschlappschlapp}}}}{=}
&
\!\!\!\!\!\!
 \gamma\big(\smadotp tf(\smadotm |v^1 |\ldots |v^{n-1})\big)
 \\
 &
\!\!\!\!\!\!
 \overset{{\scriptscriptstyle{\eqref{lakritz3}}}}{=}
&
\!\!\!\!\!\!
\dot\gamma\Big(\smadotp \ddot\gamma\big({\scriptstyle{(\cdot\cdot)_{[+]}}} {\scriptstyle{(\cdot)_{[-][+]}}} f(  ({\scriptstyle{(\cdot)_{[-][-]}}} {\scriptstyle{(\cdot\cdot)_{[-]}}}) \mancino (v^1 |\ldots |v^{n-1}|1))\big)\Big)
\\
&
\!\!\!\!\!\!
\overset{{\scriptscriptstyle{\eqref{nawas1}, \eqref{Tch6}}}}{=}
&
\!\!\!\!\!\!
\dot\gamma\Big( \ddot\gamma\big({\scriptstyle{(\cdot)_{[+]+(2)}}}{\scriptstyle{(\cdot)_{[+]-[+]}}} {\scriptstyle{(\cdot\cdot)_{[+]}}} {\scriptstyle{(\cdot)_{[+]+(1)[+]}}}{\scriptstyle{(\cdot)_{[-][+]}}}
\\
&& \qquad \qquad 
f(  ({\scriptstyle{(\cdot)_{[-][-]}}} {\scriptstyle{(\cdot)_{[+]+(1)[-]}}}{\scriptstyle{(\cdot\cdot)_{[-]}}} {\scriptstyle{(\cdot)_{[+]-[-]}}}) \mancino (v^1 |\ldots |v^{n-1}|1))\big)\Big)
\\
&
\!\!\!\!\!\!
\overset{{\scriptscriptstyle{\eqref{mampf2}, \eqref{Sch2}}}}{=}
&
\!\!\!\!\!\!
\dot\gamma\Big( \ddot\gamma\big({\scriptstyle{(\cdot\cdot)_{[+]}}} {\scriptstyle{(\cdot)_{[+](1)[+]}}}{\scriptstyle{(\cdot)_{[-][+]}}}
f(({\scriptstyle{(\cdot)_{[-][-]}}} {\scriptstyle{(\cdot)_{[+](1)[-]}}}{\scriptstyle{(\cdot\cdot)_{[-]}}} {\scriptstyle{(\cdot)_{[+](2)}}}) \mancino (v^1 |\ldots |v^{n-1}|1))\big)\Big)
\\
&
\!\!\!\!\!\!
\overset{{\scriptscriptstyle{\eqref{Tch6}, \eqref{Tch2}}}}{=}
&
\!\!\!\!\!\!
\dot\gamma\Big( \ddot\gamma\big(({\scriptstyle{(\cdot\cdot)_{[+]}}} f(({\scriptstyle{(\cdot\cdot)_{[-]}}} {\scriptstyle{(\cdot)}}) \mancino (v^1 |\ldots |v^{n-1}|1))\big)\Big)
\\
&
\!\!\!\!\!\!
\overset{{\scriptscriptstyle{\eqref{carrefour1}}}}{=}
&
\!\!\!\!\!\!
\gamma\big({\scriptstyle{(\cdot)_{(2)[+]}}} f(({\scriptstyle{(\cdot)_{(2)[-]}}} {\scriptstyle{(\cdot)_{(1)}}}) \mancino (v^1 |\ldots |v^{n-1}|1))\big)
\\
&
\!\!\!\!\!\!
\overset{{\scriptscriptstyle{\eqref{Tch3}}}}{=}
&
\!\!\!\!\!\!
\gamma\big({\scriptstyle{(\cdot)}} f(v^1 |\ldots |v^{n-1}|1)\big)
\\
&
\!\!\!\!\!\!
\overset{{\scriptscriptstyle{\eqref{stablehalt}}}}{=}
&
\!\!\!\!\!\!
f(v^1 |\ldots |v^{n-1}|1),
\end{eqnarray*}
\end{small}
which is the last line in \eqref{lakritz1}. For the degeneracies we obtain for all $j = 0, \ldots, n$ by simply staring at \eqref{viatoledo2} along with \eqref{soschlapp}
$$
(e \bullet_{j+1} f)(v^1 |\ldots |v^{n+1}) = f\big(v^1 | \ldots | \gve(v^{j+1}) | \ldots | v^{n+1}\big),
$$
which is \eqref{lakritz2}.

To conclude the proof, we have to check Eq.~\eqref{lagrandebellezza1} 
  in this situation, that is $t(w \bullet_i f) = w \bullet_{i+1} f$
  for $0 \leq i \leq n-p$ and $w \in \cO(p)$, $f \in \cM(n)$ but we are going to do this only for $i = 0$ as this is the most difficult case; the verification for $1 \leq i \leq n-p$ will be left to the reader.
  Indeed, for $w = (u^1|\ldots|u^p) \in \cO(p)$, we have
  \begin{small}
\begin{eqnarray*}
 & &(t(w \bullet_0 f))(v^1 |\ldots |v^{n-p+1})
  \\
  &
\!\!\!\!\!\!
\overset{{\scriptscriptstyle{\eqref{lakritz3}}}}{=}
&
\!\!\!\!\!\!
\gamma\Big(\big((\sma{\cdot} v^1) \mpact (w \bullet_0 f)\big)\big(v^2| \ldots| v^{n-p+1}| 1 \big) \Big)
 \\
 &
\!\!\!\!\!\!
 \overset{{\scriptscriptstyle{\eqref{extraop}}}}{=}
&
\!\!\!\!\!\!
\dot\gamma\Big((\sma{\cdot} v^1)_{[+]} \ddot\gamma\big(\sma{\cdot\cdot}_{[+]} (u^1 \mpact f)\big(\sma{\cdot\cdot}_{[-]} \mancino (u^2 | \ldots | u^p)| (\sma{\cdot} v^1)_{[-]}  \mancino (v^2| \ldots| v^{n-p+1}| 1) \big)\big) \Big)
\\
&
\!\!\!\!\!\!
\overset{{\scriptscriptstyle{\eqref{nawas1}, \eqref{Tch6}}}}{=}
&
\!\!\!\!\!\!
\dot\gamma\Big(\ddot\gamma\big((\sma{\cdot} v^1)_{[+]+(2)} (\sma{\cdot} v^1)_{[+]-[+]}  \sma{\cdot\cdot}_{[+]}
(\sma{\cdot} v^1)_{[+]+(1)[+]}
(u^1 \mpact f)
\\
&&
\big((\sma{\cdot} v^1)_{[+]+(1)[-]} \sma{\cdot\cdot}_{[-]} (\sma{\cdot} v^1)_{[+]-[-]} \mancino (u^2 | \ldots | u^p)| (\sma{\cdot} v^1)_{[-]}  \mancino (v^2| \ldots| v^{n-p+1}| 1) \big)\big) \Big)
\\
&
\!\!\!\!\!\!
\overset{{\scriptscriptstyle{\eqref{mampf2}, \eqref{Sch2}}}}{=}
&
\!\!\!\!\!\!
\dot\gamma\Big(\ddot\gamma\big(\sma{\cdot\cdot}_{[+]}
(\sma{\cdot} v^1)_{[+](1)[+]}
(u^1 \mpact f)
\\
&&
\big((\sma{\cdot} v^1)_{[+](1)[-]} \sma{\cdot\cdot}_{[-]} (\sma{\cdot} v^1)_{[+](2)} \mancino (u^2 | \ldots | u^p)| (\sma{\cdot} v^1)_{[-]}  \mancino (v^2| \ldots| v^{n-p+1}| 1) \big)\big) \Big)
\\
&
\!\!\!\!\!\!
\overset{{\scriptscriptstyle{\eqref{carrefour1}, \eqref{Tch6}}}}{=}
&
\!\!\!\!\!\!
\gamma\big(\sma{\cdot}_{(2)[+]}
\sma{\cdot}_{(1)[+](1)[+]} v^1_{[+](1)[+]}
(u^1 \mpact f)
\\
&& \!\!\!
\big(v^1_{[+](1)[-]} \sma{\cdot}_{(1)[+](1)[-]} \sma{\cdot}_{(2)[-]} \sma{\cdot}_{(1)[+](2)} v^1_{[+](2)} \mancino (u^2 | \ldots | u^p)| (\sma{\cdot} v^1)_{[-]}  \mancino (v^2| \ldots| v^{n-p+1}| 1) \big)\big)
\\
&
\!\!\!\!\!\!
\overset{{\scriptscriptstyle{\eqref{Tch4}, \eqref{Tch3}}}}{=}
&
\!\!\!\!\!\!
\gamma\big(\sma{\cdot}_{[+](2)}
\sma{\cdot}_{[+](1)[+]} v^1_{[+](1)[+]}
(u^1 \mpact f)
\\
&& \!\!\!
\big(v^1_{[+](1)[-]} \sma{\cdot}_{[+](1)[-]} v^1_{[+](2)} \mancino (u^2 | \ldots | u^p)| ( v^1_{[-]} \sma{\cdot}_{(1)[-]}) \mancino (v^2| \ldots| v^{n-p+1}| 1) \big)\big)
\\
&
\!\!\!\!\!\!
\overset{{\scriptscriptstyle{\eqref{Tch4}}}}{=}
&
\!\!\!\!\!\!
\gamma\big(\sma{\cdot}_{(2)}
\sma{\cdot}_{(1)[+][+]} v^1_{[+](1)[+]}
(u^1 \mpact f)
\\
&& \!\!\!
\big(v^1_{[+](1)[-]} \sma{\cdot}_{(1)[+][-]} v^1_{[+](2)} \mancino (u^2 | \ldots | u^p)| ( v^1_{[-]} \sma{\cdot}_{(1)[-]}) \mancino (v^2| \ldots| v^{n-p+1}| 1) \big)\big)
\\
&
\!\!\!\!\!\!
\overset{{\scriptscriptstyle{\eqref{carrefour1}}}}{=}
&
\!\!\!\!\!\!
\dot\gamma\Big(\ddot\gamma\big(\sma{\cdot\cdot}
\sma{\cdot}_{[+][+]} v^1_{[+](1)[+]}
(u^1 \mpact f)
\\
&& \!\!\!
\big(v^1_{[+](1)[-]} \sma{\cdot}_{[+][-]} v^1_{[+](2)} \mancino (u^2 | \ldots | u^p)| ( v^1_{[-]} \sma{\cdot}_{[-]}) \mancino (v^2| \ldots| v^{n-p+1}| 1) \big)\big)\Big)
\\
&
\!\!\!\!\!\!
\overset{{\scriptscriptstyle{\eqref{stablehalt}}}}{=}
&
\!\!\!\!\!\!
\gamma\big(\sma{\cdot}_{[+][+]} v^1_{[+](1)[+]}
(u^1 \mpact f)
\\
&& \!\!\!
\big(v^1_{[+](1)[-]} \sma{\cdot}_{[+][-]} v^1_{[+](2)} \mancino (u^2 | \ldots | u^p)| ( v^1_{[-]} \sma{\cdot}_{[-]}) \mancino (v^2| \ldots| v^{n-p+1}| 1) \big)\big)
\\
%\end{eqnarray*}
%\begin{eqnarray*}
&
\!\!\!\!\!\!
\overset{{\scriptscriptstyle{\eqref{Tch4}, \eqref{Tch5}}}}{=}
&
\!\!\!\!\!\!
\gamma\big((\sma{\cdot} v^1)_{[+]}
(u^1 \mpact f)
\\
&& \!\!\!
\big(((\sma{\cdot} v^1_{(1)})_{[-](1)} v^1_{(2)}) \mancino (u^2 | \ldots | u^p)| (\sma{\cdot} v^1_{(1)})_{[-](2)} \mancino (v^2| \ldots| v^{n-p+1}| 1) \big)\big)
%\\
\end{eqnarray*}
\begin{eqnarray*}
&
\!\!\!\!\!\!
\overset{{\scriptscriptstyle{}}}{=}
&
\!\!\!\!\!\!
\gamma\Big(((\sma{\cdot} v^1_{(1)} u^1) \mpact f)
\big(v^1_{(2)} \mancino (u^2 | \ldots | u^p)| v^2| \ldots| v^{n-p+1}| 1\big)\Big)
\\
&
\!\!\!\!\!\!
\overset{{\scriptscriptstyle{\eqref{lakritz3}}}}{=}
&
\!\!\!\!\!\!
tf\big(v^1_{(1)} u^1 | v^1_{(2)} \mancino (u^2 | \ldots | u^p)| v^2| \ldots| v^{n-p+1}\big)
\\
&
\!\!\!\!\!\!
\overset{{\scriptscriptstyle{\eqref{viatoledo1}}}}{=}
&
\!\!\!\!\!\!
(w \bullet_1 tf)(v^1 | \ldots| v^{n+p-1}),
\end{eqnarray*}
  \end{small}
which finishes the proof.
 \end{proof}

\begin{rem}
  As already mentioned, the restriction to trivial coefficients (that is, the base algebra $A$) in the operadic structure of $\coc^p(U,A)$ is not necessary and has only been made to avoid too cumbersome formul{\ae} that might obscure the general idea. Replacing $A$ by a (braided) commutative monoid in the braided
category of Yetter-Drinfel'd modules would also work, see \cite[Thm.~1.3]{Kow:BVASOCAPB}.
   \end{rem}

 %% \begin{rem}
%%    The pattern behind our construction is quite striking: in this particular situation, starting from a cyclic unital opposite module $\cM$ over an operad $\cO$ with multiplication (the chain space that computes $\Tor$ over the cochain space that computes $\Ext$), one obtains a noncommutative calculus on $(H^\bullet(\cO), H_\bullet(\cM))$. Adding the assumption that the operad $\cO$ is cyclic, one can pass to the cyclic duals both for $\cO^\bullet$ and $\cM_\bullet$ with the result that now r\^oles are exchanged and $\cO_\bullet$ is a cyclic unital opposite module over $\cM^\bullet$  (the chain space that computes $\Coext$ over the cochain space that computes $\Cotor$), which means that $(H^\bullet(\cM), H_\bullet(\cO))$ yields a noncommutative calculus. As a side remark, both $H^\bullet(\cO)$ and $H^\bullet(\cM)$ even become Batalin-Vilkoviski\u\i\ {\em algebras} here, that is, a Gerstenhaber algebra whose bracket is ``generated'' by the cyclic coboundary $B$ in the sense of
%% $
%%  \{\gvf, \psi\} = - (-1)^\gvf B \gvf \smallsmile \psi - \gvf \smallsmile B \psi + (-1)^\phi B(\gvf \smallsmile \psi).
%% $
%% We wonder whether one can observe this sort of dual behaviour on a much more general level only involving, say, two cyclic operads with a mutual action, but were at present not able to make this idea more precise.
%%    \end{rem}

\section{The noncommutative calculus structure on $\Coext$ over $\Cotor$}

The main purpose of the structure of a cyclic opposite operad module analysed in detail in the previous section stems from the fact that they are directly related to what is called {\em (homotopy) noncommutative differential calculi} or {\em (homotopy) BV modules}. Let us briefly review them first and then see how they are connected to the results in the previous section.

\subsection{Noncommutative differential calculi {\cite{GelDalTsy:OAVONCDG}}}
\label{esregnetdurchdieDecke}
Let $(\cX_\bullet,b,B)$ be a mixed complex, and let $(\mathcal{G}^\bullet,\delta,\{\cdot,\cdot\}, \smallsmile)$ be both 
a dg associative algebra and a dg Lie algebra (with degree shifted by one)  such that its cohomology $H^\bullet(\mathcal{G},\gd)$ is a Gerstenhaber algebra (which one may refer to as homotopy Gerstenhaber algebra). The mixed complex $\cX$ is called a {\em homotopy Gerstenhaber module} over $\mathcal{G}$ if 
 $(\cX_{-\bullet}, b)$ is both a dg module over $(\mathcal{G}^\bullet, \smallsmile, \iota)$ and a dg Lie algebra module over $(\mathcal{G}^\bullet[1], \{\cdot, \cdot\}, \cL)$ by means of two respective actions
$$
 \iota \colon \mathcal{G}^p\otimes \cX_n\to \cX_{n-p}, \quad  \cL \colon \mathcal{G}^p \otimes \cX_n \to \cX_{n-p+1}, %\quad \cS \colon \mathcal{G}^p\otimes M_n\to M_{n+p-2}
 $$
 called {\em cap product} (or {\em contraction}) and \emph{Lie derivative}, respectively, such that, writing $\iota_\gvf := \iota(\gvf \otimes \cdot)$ for $\gvf \in \mathcal{G}$ and similarly for all operators in the sequel, the
 {\em Gelfan'd-Daletski\u\i-Tsygan homotopy} formula
%% $
%% \cT \colon \mathcal{G}^p
%% \otimes \mathcal{G}^q \otimes M_n\to M_{n+p+q-2}
%% $
 \begin{equation}
\label{panem}
% \begin{cases}
 \!\!\!
 \begin{array}{rcl}
   [\iota_\gvf, \cL_\psi] - \iota_{\{\gvf, \psi\}} &=& [b, \cT_{\gvf,\psi}] - \cT_{\gd \gvf, \psi} -(-1)^\gvf \cT_{\gvf, \delta \psi}
   % , \\
% [\cS_\gvf, \cL_\psi] - \cS_{\{\gvf,\psi\}} &=& [B, \cT(\gvf,\psi)], 
 \end{array}
% \end{cases}
 \end{equation}
 holds, where $\cT$ is an operator
  $
 \cT \colon \mathcal{G}^p
 \otimes \mathcal{G}^q \otimes \cX_n\to \cX_{n+p+q-2}.
 $
 A homotopy Gerstenhaber module $\cX$ is called {\em homotopy Batalin-Vilkoviski\u\i\ (BV) module} over $\mathcal{G}$ if there is an additional operator
 $\cS \colon \mathcal{G}^p\otimes \cX_n\to \cX_{n+p-2}$ such that the  {\em Cartan-Rinehart homotopy} formul\ae
  \begin{equation}
\label{dellera1}
\begin{cases}
    \begin{array}{rcl}
\cL_\gvf &=& [B, \iota_\gvf]+[b, \cS_\gvf] + S_{\gd \gvf},\\
[\cS_\gvf, \cL_\psi] - \cS_{\{\gvf,\psi\}} &=& [B, \cT_{\gvf,\psi}]
%% [B, S_\gvf]=0,
\end{array}
\end{cases}
\end{equation}
are verified.
A Gerstenhaber resp.\ BV module is then defined by analogous relations that would hold on homology $H_\bullet(M,b)$ and cohomology $H^\bullet(\mathcal{G},\delta)$ (which then becomes a true Gerstenhaber algebra) setting all homotopy terms to zero. For example, in case of a BV module one has the following relations:
\begin{equation}
  \label{true}
\iota_{\gvf \smallsmile \psi} = \iota_\gvf \iota_\psi, \quad \cL_{\{\gvf,\psi\}} = [\cL_\gvf, \cL_\psi], \quad [\iota_\gvf, \cL_\psi] = \iota_{\{\gvf, \psi\}}, \quad  \cL_\gvf = [B, \iota_\gvf].
\end{equation}
Inspired by the obvious resemblance of these identities with the well-known ones in differential geometry, a BV module structure is also called a {\em noncommutative differential calculus} in \cite{Tsy:CH} and a {\em noncommutative Cartan calculus} in \cite{FioKow:HBOCANCC}; one might also want to call this a {\em Tamarkin-Tsygan calculus} since these structures are analysed in detail in \cite{TamTsy:NCDCHBVAAFC}. In this spirit, one may equally speak of a {\em homotopy noncommutative differential/Cartan calculus} or simply a {\em homotopy calculus} on the pair $(\mathcal{G}, \cX)$ instead of a homotopy BV module $\cX$ over $\mathcal{G}$.

%\begin{example}
%The most basic example is of course given by the well-known Cartan calculus in differential geometry, where all homotopy operators vanish. We will come back to this in more detail in \S\ref{examples}.
%  \end{example}

%More involved examples will originate from our results in the previous section, as we will see next.

\subsection{Noncommutative calculi arising from cyclic opposite modules over operads}
As already hinted at,
there is an intimate relationship between homotopy noncommutative calculi
and cyclic opposite modules %(see again \S\ref{dauerregen} for all details regarding the latter)
which is expressed in the following theorem from \cite[Thm.~5.4]{Kow:GABVSOMOO}):

%The nice feature of
%cyclic opposite $\cO$-modules is that they automatically turn into homotopy BV modules (see %\cite[Thm.~5.4]{Kow:GABVSOMOO}):

\begin{theorem}
  \label{terzamissione}
\!\! The structure of a cyclic unital opposite module  $(\cM, t)$ over an operad with multiplication $(\cO, \mu, e)$ induces a homotopy calculus on the pair $(\cO, \cM)$ of $k$-modules. 
  \end{theorem}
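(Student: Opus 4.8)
Since Theorem \ref{terzamissione} is quoted verbatim from \cite[Thm.~5.4]{Kow:GABVSOMOO}, the cleanest route is to cite that reference; nevertheless, here is how I would prove it from scratch. The plan is to manufacture, out of the purely operadic data $(\cO,\mu,e)$ and $(\cM,t)$, all the ingredients of \S\ref{esregnetdurchdieDecke} and then verify the axioms. On the operad side I would invoke the Gerstenhaber--Voronov / McClure--Smith mechanism recalled in \S\ref{pamukkale1}: an operad with multiplication yields a cochain complex $(\cO^\bullet,\delta)$ with $\delta=[\mu,-]$, a cup product $\gvf\smallsmile\psi$ built from $\mu$ and the partial compositions, and brace operations whose leading term is a pre-Lie product, so that $H^\bullet(\cO,\delta)$ is a Gerstenhaber algebra. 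On the module side the opposite-module axioms \eqref{TchlesischeStr} make $b=\sum_i(-1)^i(\mu\bullet_i-)$ a differential, and the cyclic operator $t$ together with the degeneracies supplies Connes' cyclic differential $B$ by the usual norm construction; thus $(\cM_\bullet,b,B)$ becomes a mixed complex.

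Next I would define the calculus operators directly in terms of $\bullet_i$, $t$, and $\mu$: the contraction $\iota_\gvf$ as (a signed version of) $\gvf\bullet_1-$, the Lie derivative $\cL_\gvf$ as the signed sum of the $\gvf\bullet_i-$ augmented by the cyclically twisted term coming from the extra operation $\bullet_0$ (the analogue of the term that in the explicit $\Coext$ formulae carries the contraaction $\gamma$), and the higher homotopy operators $\cS_\gvf$ and $\cT_{\gvf,\psi}$ as in \eqref{Regenregenregen}, obtained by inserting powers of $t$ between module operations. These are the operadic avatars of the Rinehart and Nest--Tsygan homotopies.

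The verification then splits into three blocks: that $(\cM_{-\bullet},b)$ is a dg module over $(\cO^\bullet,\smallsmile,\iota)$ and a dg Lie module over $(\cO^\bullet[1],\{\cdot,\cdot\},\cL)$; the Gelfan'd--Daletski\u\i--Tsygan formula \eqref{panem}; and the two Cartan--Rinehart formulae \eqref{dellera1}. The single structural identity that drives all of this is the cyclic compatibility \eqref{lagrandebellezza1}, $t(w\bullet_i f)=w\bullet_{i+1}f$, which lets one telescope a sum of consecutive $\bullet_i$ into a commutator with $t$, hence with $B$, modulo $b$-boundaries; combined with unitality of $\mu$ and $e$ this is exactly what upgrades the naive (homology-level) Cartan magic formula $\cL_\gvf=[B,\iota_\gvf]$ to its homotopy-corrected form $\cL_\gvf=[B,\iota_\gvf]+[b,\cS_\gvf]+\cS_{\delta\gvf}$. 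The computations parallel the Hochschild case treated in \cite{NesTsy:OTCROAA, TamTsy:NCDCHBVAAFC, Tsy:CH} but are carried out at the abstract operadic level, hence without any coefficient-specific input.

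The main obstacle is the sign- and index-bookkeeping in \eqref{panem} and in the second line of \eqref{dellera1}: there the operators $\cS$ and $\cT$ are not auxiliary decorations but are forced by the precise amount by which the chain-level identities fail, and checking that telescoping the $\bullet_i$-sums against $t$ leaves exactly $[b,\cT_{\gvf,\psi}]$- and $[B,\cT_{\gvf,\psi}]$-terms is the delicate step. Isolating the minimal hypotheses --- a cyclic unital opposite module over an operad with multiplication --- under which everything closes up is precisely the content of \cite{Kow:GABVSOMOO}, which is why in the end I would simply appeal to \cite[Thm.~5.4]{Kow:GABVSOMOO}.
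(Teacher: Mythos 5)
Your proposal takes essentially the same route as the paper: the paper gives no proof of Theorem \ref{terzamissione} at all, but quotes it verbatim from \cite[Thm.~5.4]{Kow:GABVSOMOO}, which is exactly what you ultimately do, and your sketch of the ingredients (the mixed complex $(\cM,b,B)$ from the simplicial and cyclic structure, operators built from $\bullet_i$ and $t$, verification of \eqref{panem} and \eqref{dellera1} on the normalised complex) is consistent with the explicit formulae \eqref{Regenregenregen} that the paper records. One minor slip in your sketch, immaterial to the citation-based conclusion: the contraction is $\iota_\gvf x = (\mu \circ_2 \gvf)\bullet_0 x$, landing in $\cM(n-p)$, rather than a signed $\gvf \bullet_1 x$, which would land in $\cM(n-p+1)$, i.e.\ in the degree of $\cL_\gvf$.
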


For later use, we will give some explicit formul\ae, see \cite{Kow:GABVSOMOO} and also \cite[\S6]{FioKow:HBOCANCC}. For $\gvf \in \cO(p)$, $\psi \in \cO(q)$, and $x \in \cM(n)$ one obtains
\begin{equation}
  \label{Regenregenregen}
  \begin{array}{rcl}
    B x &=& \sum\limits^n_{i=0} (-1)^{in} e \bullet_0 t^i(x),
    \\
\iota_\gvf x &=& (\mu \circ_2 \gvf) \bullet_0 x,    
\\
\cL_\gvf x &=& \sum\limits^{n-p+1}_{i=1}  (-1)^{(p-1)(i-1)} \gvf \bullet_i x + 
        \sum\limits^{p}_{i=1} (-1)^{n(i-1) + p - 1} \gvf \bullet_0 t^{i-1} (x),
\\
\cS_\gvf x &=& 
        \sum\limits^{n-p+1}_{j=1} \, 
        \sum\limits^{n - p+1}_{i=j} (-1)^{ n(j-1) + (p-1)(i-1)} e \bullet_0 \big(\gvf \bullet_i t^{j-1}(x)\big),
\\
        \cT_{\gvf,\psi}(x) &=& \sum\limits^{p-1}_{j=1}  \sum\limits^{p-1}_{i=j} (-1)^{n(j-1) + (q-1)(i-j) + p} (\gvf \circ_{p-i+j} \psi) \bullet_0 t^{j-1}(x).
  \end{array}
\end{equation}
Observe the formal analogy between the cap product $\iota_\gvf x =: \gvf \smallfrown x$ and the cup product $\gvf \smallsmile \psi = (\mu \circ_2 \psi) \circ_1 \gvf$ in the operad $\cO$.
With these explicit expressions, it is an essentially direct (but not-so-straightforward) check that on the normalised complex $\widebar{\cM}$ and for elements in $\widebar{\cO}$, the homotopy formul\ae \ \eqref{panem} and \eqref{dellera1} hold.

Combining this general fact with the statement of
Theorem \ref{plebiscito} above,
one then at once obtains in our more specific situation the main result in this section:
 
\begin{cor}
  \label{tarrega}
   The couple $(\cO, \cM)$ as defined in Eq.~\eqref{maisondumonde} can be equipped with the structure of a homotopy Cartan calculus if $M$ is a stable aYD contramodule over $U$.
In particular, this induces the structure of a BV module on
$\Coext^U_\bullet(A, M)$
over
$\Cotor^\bullet_U(A,A)$.
 \end{cor}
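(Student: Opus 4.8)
The plan is to obtain the corollary as a formal consequence of the two structural results already in hand, the genuine content having been carried by Theorem \ref{plebiscito}.

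First I would combine Theorem \ref{plebiscito} with Theorem \ref{terzamissione}. By the former, the hypothesis that $M$ is a stable aYD contramodule over $(U,A)$---which is both left and right Hopf, with $U_\ract$ flat over $A$ as a standing assumption---guarantees that the extra operation \eqref{extraop} makes $(\cM, t)$ a cyclic unital opposite module over the operad with multiplication $(\cO, \mu, e)$. Feeding this into the latter theorem then yields at once a homotopy calculus (homotopy BV module) on the pair $(\cO, \cM)$, the operators $B, \iota, \cL, \cS, \cT$ being exactly those displayed in \eqref{Regenregenregen}. This settles the first assertion with essentially no further work.

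For the second assertion I would pass to (co)homology and identify the two sides. On the operadic side, $\cO(p) = \coc^p(U,A)$ and the differential $\delta$ induced by the multiplication $\mu = (1_U|1_U)$ coincides with the cobar differential $\partial$ of \eqref{landliebecotor}, so that $H^\bullet(\cO) = \Cotor^\bullet_U(A,A)$; by the theorem recalled right after \eqref{soschlapp} (compare Lemma \ref{dumdidum}) this cohomology carries its Gerstenhaber algebra structure. On the module side, Theorem \ref{salveregina} furnishes the isomorphism $\eta$ of chain complexes matching the simplicial differential $d$ on $\cM$ with the differential $b$ on $D_\bullet(U,M)$, whence $H_\bullet(\cM) = \Coext^U_\bullet(A,M)$ under the standing assumption that $M$ is $A$-injective. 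Thus the pair $(H^\bullet(\cO), H_\bullet(\cM))$ is precisely $(\Cotor^\bullet_U(A,A), \Coext^U_\bullet(A,M))$.

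It then remains to see that the homotopy formul\ae\ \eqref{panem} and \eqref{dellera1} collapse to the strict BV relations \eqref{true} on (co)homology. For a $b$-cycle $x$ and $\delta$-cocycles $\varphi, \psi$ the correction terms $\cT_{\delta\varphi, \psi}$ and $\cS_{\delta\varphi}$ vanish because $\delta\varphi = \delta\psi = 0$, while the terms $[b, \cT_{\varphi,\psi}]x$ and $[b, \cS_\varphi]x$ reduce to $b$-boundaries and hence die in homology. The one point requiring care---and the nearest thing to an obstacle here---is precisely this last vanishing: one must confirm from the explicit shape of $\cS$ and $\cT$ in \eqref{Regenregenregen} that their values on cycles are boundaries, which is immediate since each is built from the simplicial operations already matched to $b$ via $\eta$ together with the extra face $e \bullet_0 (-)$. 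Granting this, the induced operators $\iota, \cL, B$ obey \eqref{true}, which is exactly the BV module structure on $\Coext^U_\bullet(A,M)$ over the Gerstenhaber algebra $\Cotor^\bullet_U(A,A)$ asserted by the corollary.
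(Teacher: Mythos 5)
Your proposal matches the paper's own argument: the corollary is obtained precisely by feeding Theorem \ref{plebiscito} into Theorem \ref{terzamissione}, and the BV module structure on $\Coext^U_\bullet(A,M)$ over $\Cotor^\bullet_U(A,A)$ then follows by passing to (co)homology (using Theorem \ref{salveregina} and the standing flatness/injectivity hypotheses for the identifications), where the homotopy terms vanish by the very definition of a BV module in \S\ref{esregnetdurchdieDecke}. One small remark: the point you flag as ``requiring care'' is automatic and needs no inspection of the explicit formul\ae\ \eqref{Regenregenregen} --- for a cycle $x$ one has $[b,\cS_\gvf]x = b(\cS_\gvf x)$ and $[b,\cT_{\gvf,\psi}]x = b(\cT_{\gvf,\psi}x)$, which are boundaries regardless of what $\cS$ and $\cT$ look like.
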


 With the help of Eqs.~\eqref{Regenregenregen}, %and Eq.~\eqref{extra2},
 we can then explicitly obtain the operations that define the calculus structure, {\em cf.}~\S\ref{esregnetdurchdieDecke}. For example,
 for the cap product $\iota$, the cyclic coboundary $B$, and the Lie derivative $\cL$, a not really quick computation using Eqs.~\eqref{nawas1}, \eqref{lakritz3}, \eqref{maxdudler2}, \eqref{soschlapp}, \eqref{viatoledo1}, \eqref{extraop}, and basically all of the identities \eqref{Sch1}--\eqref{mampf3} yields:
 \begin{small}
   \begin{eqnarray}
     \nonumber
%    \label{psytrance1}
    (B f)(v^1|\ldots|v^{n+1}) &\!\!\!\!\!
    =&\!\!\!\!\!
    \textstyle\sum\limits^{n+1}_{i=1} (-1)^{(i-1)n} \gamma\Big(\!\smadotp (v^i \mpact f)\big(\smadotm \mancino (v^{i+1} | \ldots | v^{n+1}) | v^1|\ldots|v^{i-1}\big)\!\Big)                    
  \\
      \label{psytrance2}
(\iota_w f)(v^1|\ldots|v^{n-p}) &\!\!\!\!\!=&\!\!\!\!\! \gamma\big(\smadotp f(\smadotm \mancino (u^1 | \ldots | u^p) | v^1|\ldots|v^{n-p})\big) 
      %\iota_w f &=& \dot\gamma\big(\smadotp f(\smadotm \mancino (u^1 | \ldots | u^p) | \sma{-}^1 | \ldots | \sma{-}^{n-p+1})\big)                 ,
\\[4pt]
\nonumber
    (\cL_w f)(v^1|\ldots|v^{n-p+1})&\!\!\!\!\!=&\!\!\!\!\!
\\
\nonumber
    \label{psytrance3}
    &&
    \hspace*{-3.38cm}
     \textstyle\sum\limits^{n-p+1}_{i=1} (-1)^{(p-1)(i-1)} 
 f\big(v^1 | \ldots | v^{i-1} | v^i \mancino (u^1 | \ldots |u^p)|
 v^{i+1} | \ldots | v^{n-p+1}\big)
 \\
 \nonumber
    &&
  \hspace*{-3.5cm}
  + \textstyle\sum\limits^{p}_{i=1} (-1)^{n(i-1) + p-1}
   \gamma\Big(\!\smadotp (u^i \mpact f)\big(\smadotm \mancino (u^{i+1} | \ldots | u^{p}) |v^1|\ldots|v^{n-p+1}| u^1|\ldots|u^{i-1}\big)\!\Big) 
  \end{eqnarray}
 \end{small}
for $w := (u^1| \ldots | u^p) \in \cO(p)$, $f \in \cM(n)$, and $(v^1|\ldots|v^k) \in U^{\otimes_A k}$.
Here, if $i < j$ appears in a sum, an element $(u^j| \ldots| u^i)$ has to be read as $1_A$: for example, in the cyclic boundary $B$ the first and the last term have to be read as $ \gamma\big((\sma{-} v^1 \mpact f)\big(v^{2} | \ldots | v^{n+1}\big)\big)$ resp.\ $\gamma\big(\sma{-} (v^{n+1} \mpact f)\big(v^{1} | \ldots | v^{n}\big)\big)$, and similarly in the expression for the Lie derivative.

We spare the reader at this point to be confronted with the explicit expressions of the homotopy operators $\cS$ and $\cT$ as these explicit expressions will not be needed in what follows.

\subsection{The homotopy calculus structure for cocommutative bialgebroids}
\label{nirvana}

Cocommutative bialgebroids constitute an important class of examples of bialgebroids and will in particular cover the main example dealt with in the next section. 

In a cocommutative bialgebroid $(U,A)$, the base algebra $A$ is necessarily commutative and the source map equals the target one. This, in turn, implies that there exceptionally exists a trivial contraaction as discussed in Example \ref{trivial}: any right $A$-module $M$ is a right $U$-contramodule by means of $\Hom_A(U,M) \to M$, $f \mapsto f(1)$; if the right $A$-module $M$ also happens to be a left $U$-module, it automatically becomes a stable aYD contramodule over $U$, that is, fulfils Eqs.~\eqref{romaedintorni}--\eqref{stablehalt} as one quickly verifies by Eqs.~\eqref{Sch4} and \eqref{Sch2}.
Using the trivial contraaction notably simplifies the structure maps of the cocyclic $k$-module $C_\bullet(U,M)$ from Lemma \ref{lakritz2}, which
for any $f \in C_n(U,M)$
now become
\begin{eqnarray}
  \label{lakritz4}
%\begin{array}{rcl}
(d_i f)(u^1|\ldots|u^{n-1}) \!\!\!\!&=\!\!\!\!& \left\{\!\!\!
\begin{array}{l} 
f(1 |u^1 |\ldots |u^{n-1})
\\ 
 f(u^1 |\ldots |\gD u^i |\ldots | u^{n-1})
\\
f(u^1|\ldots|u^{n-1} |1)
\end{array}\right.  
  \begin{array}{l} \mbox{if} \ i=0, \\ \mbox{if} \
  1 \leq i \leq n-1, \\ \mbox{if} \ i = n,  \end{array} 
  \\[2pt]
  \nonumber
  \label{lakritz5}
 (s_j f)(u^1 | \ldots | u^{n+1}) \!\!\!\! 
  &=\!\!\!\!& f(u^1 | \ldots | \gve(u^{j+1}) | \ldots | u^{n+1}) \ \, \quad\mbox{for} \ 0 \leq j \leq n,
%(s_j f)(u^1 \otimes_A \cdots \otimes_A u^{n+1}) \!\!\!\! 
%  &=\!\!\!\!& f(u^1\otimes_A \cdots \otimes_A \gve(u^{j+1}) \otimes_A \cdots \otimes_A u^{n+1}) \ \, % \quad\mbox{for} \ 0 \leq j \leq n,
  \\[2pt]
  \nonumber
    \label{lakritz6}
(t f)(u^1 | \ldots | u^n) \!\!\!\!&=\!\!\!\!& (u^1 \mpact
  f)(u^2 | \ldots | u^n | 1).
\end{eqnarray}
Observe that in this situation $D_\bullet(U,M)$ and $C_\bullet(U,M)$ are not only isomorphic as complexes but equal, that is, $b = d$, as seen from Eq.~\eqref{landliebekirsch} and Eq.~\eqref{lakritz1}.
%Also
Note that now
\begin{equation}
  \label{wasserbombe}
  d = \Hom_A(\partial, M)
\end{equation}
in case the left and right $U$-comodules in \eqref{landliebecotor} are given by $A$ itself, where $\partial$ is the differential of the cochain complex $\coc^\bullet(U,A)$ computing $\Cotor^\bullet_U(A,A)$.

Furthermore, if a cocommutative left bialgebroid is left Hopf, it is automatically right Hopf as well since the distinction between the two structures vanishes: one has $u_+ \otimes_\Aopp u_- = u_{[+]} \otimes_A u_{[-]}$ for any $u \in U$, and therefore also $u \pmact f = u \mpact f$.
On top, the trivial contraaction notably entangles the calculus operators from Eq.~\eqref{psytrance2}, which reduce to
\begin{small}
   \begin{eqnarray}
    \label{psytrance4}
    (B f)(v^1|\ldots|v^{n+1}) &\!\!\!\!\!
    =&\!\!\!\!\!
    \textstyle\sum\limits^{n+1}_{i=1} (-1)^{(i-1)n} (v^i \mpact f)(v^{i+1} | \ldots | v^{n+1} | v^1|\ldots|v^{i-1})                    
  \\
      \label{psytrance5}
(\iota_w f)(v^1|\ldots|v^{n-p}) &\!\!\!\!\!=&\!\!\!\!\! f(u^1 | \ldots | u^p | v^1|\ldots|v^{n-p}) 
\\[4pt]
\nonumber
    (\cL_w f)(v^1|\ldots|v^{n-p+1})&\!\!\!\!\!=&\!\!\!\!\!
\\
    \label{psytrance6}
    &&
    \hspace*{-3.38cm}
     \textstyle\sum\limits^{n-p+1}_{i=1} (-1)^{(p-1)(i-1)} 
 f\big(v^1 | \ldots | v^{i-1} | v^i \mancino (u^1 | \ldots |u^p)|
 v^{i+1} | \ldots | v^{n-p+1}\big)
 \\
 \nonumber
    &&
  \hspace*{-3.5cm}
  + \textstyle\sum\limits^{p}_{i=1} (-1)^{n(i-1) + p-1}
  (u^i \mpact f)(u^{i+1} | \ldots | u^{p} |v^1|\ldots|v^{n-p+1}| u^1|\ldots|u^{i-1}), 
  \end{eqnarray}
\end{small}
for $w = (u^1 | \ldots | u^p)$.
In particular, $\iota$ now becomes a simple insertion of $w$ into $f$ resp.\ literally a contraction of $f$ by the element $w$.

\begin{example}
As we saw in \S\ref{voila}, the universal enveloping algebra $V\!L$ of a Lie-Rinehart algebra $(A,L)$ is a cocommutative left bialgebroid resp.\ left (and right) Hopf algebroid and hence the considerations made in this subsection apply. In the next section, we will develop this example in detail and use Eqs.~\eqref{psytrance4}--\eqref{psytrance6} to recover the classical Cartan calculus.
  \end{example}

 \section{Example: Cartan calculi in differential geometry}
 \label{examples}

 We already briefly mentioned that the noncommutative calculus on $\Coext$ and $\Cotor$ contains the classical Cartan calculus known from differential geometry as an example in a natural way. In a more restricted context, this was already achieved in \cite{KowKra:BVSOEAT} by a calculus on $\Ext$ and $\Tor$ which, however, passes through a sort of double dual, and as a consequence requires a certain finiteness condition, the use of topological tensor products as well as completions. As we will explain now, the calculus structure obtained in the previous section applied to the special case of differential geometry does not ask for anything of all that and therefore yields a much more direct and even more general approach as one can start from Lie-Rinehart algebras
 %(or Lie algebroids, like the tangent bundle)
 of possibly infinite dimension.

\subsection{Lie-Rinehart algebras and classical Cartan calculus}

Let $(A,L)$ be a Lie-Rinehart algebra
%(see \cite{Rin:DFOGCA} for details)
with $L$ not necessarily finitely generated as an $A$-module.
%, and write the anchor map  $L \to \Der_k(A)$ as $X \mapsto \{a \mapsto X(a)\}$.
We call the elements of the exterior algebra $\textstyle\bigwedge^\bullet_A \!L$ over $A$ {\em multivector fields}. The triple $(\textstyle\bigwedge^\bullet_A \!L, 0, [\cdot,\cdot]_{SN})$ defines a dg Lie algebra with respect to the zero differential along with the Schouten-Nijenhuis bracket $[\cdot, \cdot]_{SN}$ over $A$, and a Gerstenhaber algebra if we add the wedge product. Let $M$ then be both an $A$-module and a left $L$-module, where the two actions do not commute but rather reflect the presence of the anchor map, which is equivalent to saying that $M$ is a $V\!L$-module, where $V\!L$ denotes the universal enveloping algebra of $(A,L)$. The dual space
$\Hom_A(\textstyle\bigwedge_A^\bullet \!L,M)$
of alternating $M$-valued $A$-{\em multilinear forms} constitutes a mixed complex $\big(\!\Hom_A(\textstyle\bigwedge_A^\bullet \!L,M), 0, d_{\sf dR}\big)$, where
$$
d_{\sf dR} : \Hom_A(\textstyle\bigwedge^n_A \!L, M) \to \Hom_A(\textstyle\bigwedge^{n+1}_A \!L, M)
$$
is the {\em de Rham-Chevalley-Eilenberg} differential
%given by (where the terms $\hat X^i$ are omitted)
\begin{small}
\begin{equation}
\label{fassbinder}
\begin{split}
d_{\sf dR} \go(X^1, \ldots, X^{n+1}) &:= \textstyle\sum\limits^{n+1}_{i=1} (-1)^{i-1} X^i\big(\go(X^1, \ldots,  \hat{X}^i, \ldots, X^{n+1})\big) \\
&\quad + \textstyle\sum\limits_{i < j} (-1)^{i+j-1} \go([X^i, X^j], X^1, \ldots, \hat{X}^i, \ldots, \hat{X}^j, \ldots,  X^{n+1}),
\end{split}
\end{equation}
\end{small}
where as usual $\hat X^i$ means omission. With respect to this differential,
we can enhance the statement in Lemma \ref{dumdidum} that 
the HKR map
$\mathrm{Alt}:  \textstyle\bigwedge_A^n \!L \to V\!L^{\otimes^{ll}_A n}$ from Eq.~\eqref{hkr} induces a morphism of chain complexes. To begin with, observe that in this case, we have $\big(D_\bullet(V\!L, M), b\big) = \big(C_\bullet(V\!L, M), d\big)$, as explained in \S\ref{nirvana}.

\begin{lem}
  Let $(A,L)$ be a Lie-Rinehart algebra with $L$ flat but not necessarily finitely generated as an $A$-module and $M$ a left $V\!L$-module. Then the pull-back of the HKR map $\mathrm{Alt}$ yields a morphism
  $$
  \big(C_\bullet(V\!L,M), d, B\big) \lra  \big(\!\Hom_A(\textstyle\bigwedge^\bullet_A \!L, M), 0, d_{\sf dR} \big)
  $$
  of mixed complexes. In particular,
   \begin{equation}
    \label{oskar1}
    \Hom_A({\Alt},M) \circ B = d_{\sf dR} \circ \Hom_A({\Alt},M)
   \end{equation}
   holds.
  \end{lem}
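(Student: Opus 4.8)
The plan is to verify the one genuinely new assertion, namely \eqref{oskar1}; the other half of being a morphism of mixed complexes, that $\Hom_A(\mathrm{Alt},M)$ is a chain map into the zero-differential complex, is already automatic. Indeed, by \eqref{wasserbombe} one has $d=\Hom_A(\partial,M)$, so $df=f\circ\partial$, and since $\mathrm{Alt}$ is a cochain map $(\textstyle\bigwedge^\bullet_A \!L,0)\to(\coc^\bullet(V\!L,A),\partial)$ by Lemma \ref{dumdidum} we get $\partial\circ\mathrm{Alt}=0$, whence $(df)\circ\mathrm{Alt}=f\circ(\partial\circ\mathrm{Alt})=0$. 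As $V\!L$ is cocommutative and left-and-right Hopf (\S\ref{voila}), the left $V\!L$-module $M$ carries the trivial contraaction and is a stable aYD contramodule (Example \ref{shaundasschaf2}), so the cyclic differential is the simplified operator from \eqref{psytrance4} and $C_\bullet(V\!L,M)=D_\bullet(V\!L,M)$. I would first record two inputs from the generator formul\ae\ \eqref{regalate} and the primitivity of $X\in L$: namely $(X\mpact f)(w)=X f(w)-f(X\mancino w)$, obtained from \eqref{mpaction} and $X_{[+]}\otimes X_{[-]}=X\otimes 1-1\otimes X$, together with the Leibniz rule $X\mancino(w^1|\cdots|w^k)=\sum_{l}(w^1|\cdots|X w^l|\cdots|w^k)$ coming from the diagonal action \eqref{mancino} on a primitive.

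Next I would evaluate $Bf$ on $\mathrm{Alt}(X^1\wedge\cdots\wedge X^{n+1})$ via \eqref{psytrance4}. The decisive simplification is that the sign $(-1)^{(i-1)n}$ there is precisely the signature of the $(i-1)$-fold power of the $(n+1)$-cycle, so after antisymmetrisation all $n+1$ cyclic summands coincide; absorbing the resulting factor $n+1$ into the $1/(n+1)!$ of \eqref{hkr} leaves
\begin{equation*}
(Bf)\big(\mathrm{Alt}(X^1\wedge\cdots\wedge X^{n+1})\big)=\tfrac1{n!}\sum_{\tau\in S(n+1)}(-1)^\tau\,(X^{\tau(1)}\mpact f)(X^{\tau(2)}|\cdots|X^{\tau(n+1)}).
\end{equation*}
Splitting $(X^{\tau(1)}\mpact f)$ by the first identity above yields an \emph{anchor part} $X^{\tau(1)}f(\cdots)$ and a \emph{derivation part} $-f(X^{\tau(1)}\mancino\cdots)$. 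Grouping the anchor part by the value $\tau(1)=i$ and recognising the antisymmetrisation of the remaining $n$ arguments reassembles it, with the sign $(-1)^{i-1}$, into $\sum_{i=1}^{n+1}(-1)^{i-1}X^i\big((f\circ\mathrm{Alt})(X^1,\ldots,\hat X^i,\ldots,X^{n+1})\big)$, i.e.\ the first sum in \eqref{fassbinder}.

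For the derivation part I would apply the Leibniz rule and then convert products of generators into brackets. Since $X^aX^b-X^bX^a=[X^a,X^b]$ in $V\!L$, and since the transposition interchanging the two generators that land in a common slot flips both the permutation sign and the product, the antisymmetrised sum kills the symmetric part and replaces each product $X^{\tau(1)}X^{\tau(j)}$ by $\tfrac12[X^{\tau(1)},X^{\tau(j)}]\in L$. Reindexing by the unordered pair carried by the bracketed slot — the doubling of unordered pairs cancelling the $\tfrac12$ — and reading the remaining $n-1$ slots through $\mathrm{Alt}$, this matches $\sum_{p<q}(-1)^{p+q-1}(f\circ\mathrm{Alt})([X^p,X^q],X^1,\ldots,\hat X^p,\ldots,\hat X^q,\ldots,X^{n+1})$, the bracket sum in \eqref{fassbinder}. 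Adding the two parts gives \eqref{oskar1}.

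The hard part will be the sign and position bookkeeping of the last paragraph: one must track the slot in which the bracket sits, reconcile $(-1)^\tau$ with the Koszul signs $(-1)^{p+q-1}$ of \eqref{fassbinder}, and confirm that the product-to-commutator reduction (which needs $2$ invertible, guaranteed by $\mathbb{Q}\subseteq k$) is compatible with the antisymmetrisation without leaving spurious symmetric contributions. Everything else is routine rearrangement, and since the computation is carried out at the level of cochains evaluated on $\mathrm{Alt}$, the identity holds on the nose, so $\Hom_A(\mathrm{Alt},M)$ is a genuine morphism of mixed complexes rather than merely a compatibility on homology.
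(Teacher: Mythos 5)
Your proposal is correct and follows essentially the same route as the paper's own proof: reduce to \eqref{oskar1} (the chain-map part coming from Lemma \ref{dumdidum} together with \eqref{wasserbombe}), collapse the cyclic sum in \eqref{psytrance4} against the antisymmetrisation, split $X \mpact f$ into an anchor part and a diagonal-action (Leibniz) part, and convert the antisymmetrised products into commutators to recover the two sums of \eqref{fassbinder}. The sign and slot bookkeeping you flag as the remaining ``hard part'' is precisely the step the paper also carries out, in a single displayed computation using $\Alt([X,Y]) = XY - YX$.
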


\begin{proof}
 Thanks to Lemma \ref{dumdidum}, we are already left with showing Eq.~\eqref{oskar1}.
  Indeed, for any $f \in C_{n-1}(V\!L,A)$, using \eqref{psytrance4}, \eqref{mpaction}, and \eqref{regalate} one has
  \begin{small}
  \begin{equation*}
    \begin{split}
    &  ((B f)  \circ \Alt)(X^1 \wedge \cdots \wedge X^{n})
\\
&=
    \textstyle\frac{1}{n!}\sum\limits_{\gs \in S(n)}(-1)^\gs \sum\limits^{n}_{i=1} (-1)^{(i-1)n} (X^{\gs(i)} \mpact f)(X^{\gs(i+1)} | \ldots | X^{\gs(n)} | X^{\gs(1)}|\ldots|X^{\gs(i-1)})  
\\
 &=
 \textstyle\frac{n}{n!}\sum\limits_{\gs \in S(n)}(-1)^\gs (X^{\gs(1)}
 \mpact f)(X^{\gs(2)} |\ldots|X^{\gs(n)})
 \\
 &=
 \textstyle\frac{1}{(n-1)!}\sum\limits_{\gs \in S(n)}(-1)^\gs X^{\gs(1)}
 f(X^{\gs(2)} |\ldots|X^{\gs(n)})
 \\
&
 \quad - \textstyle\frac{1}{(n-1)!}\sum\limits_{\gs \in S(n)}(-1)^\gs \sum\limits^{n-1}_{i=1} f(X^{\gs(1)} |\ldots|X^{\gs(i)} X^{\gs(i+1)} | \ldots | X^{\gs(n)})
 \\
 &=
 \textstyle\frac{1}{(n-1)!}\sum\limits_{\tau \in S(n-1)}(-1)^\tau \sum\limits^{n}_{i=1} (-1)^{i-1} X^i
 \big(f(X^{\gs(1)} |\ldots|X^{\gs(i-1)}|X^{\gs(i+1)} |\ldots|X^{\gs(n)})\big)
 \\
&
 \quad - \textstyle\frac{1}{(n-1)!}\sum\limits_{\gs \in S(n)}(-1)^\gs \sum\limits^{n-1}_{i=1} f(X^{\gs(1)} |\ldots|X^{\gs(i)} X^{\gs(i+1)} | \ldots | X^{\gs(n)})
 \\
 &=     \textstyle\sum\limits^{n}_{i=1} (-1)^{i-1} X^i\big((f \circ \Alt)(X^1, \ldots,  \hat{X}^i, \ldots, X^n)\big) \\
&\quad + \textstyle\sum\limits_{i < j} (-1)^{i+j-1} (f \circ \Alt)([X^i, X^j], X^1, \ldots, \hat{X}^i, \ldots, \hat{X}^j, \ldots,  X^{n})
\\
&=    d_{\sf dR} (f \circ \Alt)(X^1, \ldots, X^{n}), 
  \end{split}
  \end{equation*}
  \end{small}
\!\!\! using $\Alt([X,Y]) = XY - YX$, where on the right hand side $X, Y$
  are seen as elements~in $V\!L$, as $\Alt$ in degree $1$ is simply the canonical map $L \to V\!L$. This concludes the proof.
  \end{proof}

Consider now the left inverse
$
\cP:  
V\!L^{\otimes^{ll}_A n}
\to
\textstyle\bigwedge_A^n \!L 
$
of $\Alt$ given by
%\begin{equation}
 % \label{weeszjanich}
$
\cP := \pr \wedge \cdots \wedge \pr,
$
%\end{equation}
as in \cite[Prop.~XVIII.7.6]{Kas:QG},
where $\pr: V\!L \to L$ denotes the natural projection; this is a quasi-isomorphism as well. Using our general construction in Corollary \ref{tarrega}, we can then deduce calculus operations on multilinear forms over multivector fields:

\begin{prop}
  \label{nyc}
  Let $(A,L)$ be a Lie-Rinehart algebra with $L$ flat but not necessarily finitely generated as an $A$-module and $M$ a left $V\!L$-module. By means of
  \begin{eqnarray}
     \label{bibo2}
\iota_{(-)} &:=&   \Hom_A({\Alt},M) \circ \iota_{\Alt(-)} \circ \Hom_A(\cP,M),  \\
    \label{bibo3}
\cL_{(-)} &:=& \Hom_A({\Alt},M) \circ \cL_{\Alt(-)} \circ \Hom_A(\cP,M), 
  \end{eqnarray}
 along with the differential $d_{\sf dR}$ from Eq.~\eqref{fassbinder} and the homotopy operators $\cS = \cT = 0$, one obtains a (homotopy) noncommutative calculus 
on 
the pair of the Gerstenhaber algebra  $(\textstyle\bigwedge^\bullet_A \!L, 0, [\cdot,\cdot]_{SN})$ and the mixed complex $\big(\!\Hom_A(\textstyle\bigwedge_A^\bullet \!L,M), 0, d_{\sf dR}\big)$ in the sense of \S\ref{esregnetdurchdieDecke}. Explicitly, Eqs.~\eqref{bibo2} and \eqref{bibo3} result into maps
\begin{small}
  \begin{eqnarray*}
    \iota: \textstyle\bigwedge^p_A \!L \otimes \Hom_A(\textstyle\bigwedge^n_A \!L, M)  &\to&  \Hom_A(\textstyle\bigwedge^{n-p}_A \!L, M) , \\ \iota_Y\go(X^1, \ldots, X^{n-p}) &=& \go(Y^1, \ldots,Y^p, X^1, \ldots, X^{n-p}),
    \\[3pt]
 %\end{eqnarray*}
 %and
 %\begin{eqnarray*}
 \cL: \textstyle\bigwedge^p_A \!L \otimes \Hom_A(\textstyle\bigwedge^n_A \!L, M)  &\to&  \Hom_A(\textstyle\bigwedge^{n-p+1}_A \!L, M),
 \\
% \cL_Y\go(X^1, \ldots, X^n) &:=&
% Y\big(\go(X^1, \ldots, X^n)\big) - \textstyle\sum\limits^n_{i=1} \go(X^1, \ldots, [Y, X^i] , \ldots,  X^n), 
\cL_Y\go(X^1, \ldots, X^{n-p+1}) &=&
\textstyle\sum\limits^p_{i=1} (-1)^{i-1} Y^i \big(\go(Y^1, \ldots, \hat Y^i, \ldots, Y^p, X^1, \ldots, X^{n-p+1})\big)
\\
&& \hspace*{-2.5cm}
+ \textstyle\sum\limits_{j=1}^{p}\sum\limits^{n-p+1}_{i=1} (-1)^j \go(Y^1, \ldots, \hat Y^j, \ldots, Y^p, X^1, \ldots, [Y^j, X^i] , \ldots,  X^{n-p+1})
\\
&& \hspace*{-2.5cm}
+
\textstyle\sum\limits_{i < j} (-1)^{i+j-1} \go([Y^i, Y^j], Y^1, \ldots, \hat{Y}^i, \ldots, \hat{Y}^j, \ldots,  Y^p, X^1, \ldots, X^{n-p+1}),
 \end{eqnarray*}
\end{small}
   for a multivector field $Y := Y^1 \wedge \cdots \wedge Y^p$,
and hence coincide with the classical operations of contraction
   %(or {\em contraction}, not to be confused with contraaction)
   and Lie derivative from \cite{Rin:DFOGCA}.
\end{prop}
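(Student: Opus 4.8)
The plan is to prove the statement in two stages: first establish the explicit formulae for the transported operators $\iota$ and $\cL$ by direct computation, and then read off that they form a calculus from the classical Cartan relations. Throughout I use that $V\!L$ is cocommutative, so by \S\ref{nirvana} the contraaction on $M$ is the trivial one and the calculus operators reduce to the $\gamma$-free expressions \eqref{psytrance5}--\eqref{psytrance6}; moreover $\coc^\bullet(V\!L,A)$ is identified with $\textstyle\bigwedge^\bullet_A \!L$ as Gerstenhaber algebras via $\Alt$ by Lemma \ref{dumdidum}, with $\cP$ its one-sided inverse $\cP\circ\Alt = \id$.

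For the contraction, I would substitute $w = \Alt(Y)$ and $f = \go\circ\cP$ into \eqref{psytrance5} and evaluate at $\Alt(X^1\wedge\cdots\wedge X^{n-p})$. Writing out both antisymmetrisations and using that $\cP = \pr\wedge\cdots\wedge\pr$ acts as the identity on the generators $Y^i,X^j\in L\subset V\!L$, each summand equals $(-1)^\gs(-1)^\tau\go(Y^1\wedge\cdots\wedge Y^p\wedge X^1\wedge\cdots\wedge X^{n-p})$; the signs from the two permutations cancel against the signs already present, and the factors $1/p!$ and $1/(n-p)!$ exactly absorb the two sums over $S(p)$ and $S(n-p)$. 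This collapses \eqref{bibo2} to the stated insertion $\iota_Y\go(X^1,\dots,X^{n-p}) = \go(Y^1,\dots,Y^p,X^1,\dots,X^{n-p})$.

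The Lie derivative is the same input into \eqref{psytrance6}, but is the genuinely laborious part. Here I would expand the action $\mpact$ using \eqref{mpaction} together with the translation maps \eqref{regalate}: for a primitive $X\in L$ one has $X_{[+]}\otimes X_{[-]} = X\otimes 1 - 1\otimes X$, whence $(X\mpact f)(\cdots) = X\big(f(\cdots)\big) - f\big(X\mancino(\cdots)\big)$, the first summand being the $L$-module action on $M$ and the second the diagonal action on the tensor legs. Since each $Y^i,X^j$ is primitive, the iterated coproduct occurring in $v^i\mancino\Alt(Y)$ develops into sums of single insertions. After antisymmetrising, the terms of the first sum in \eqref{psytrance6} that carry $v^i\mancino(u^1|\dots|u^p)$, upon applying $\cP$ and using $\Alt([X,Y]) = XY - YX$, produce the internal brackets $[Y^j,X^i]$ and $[Y^i,Y^j]$, while the second sum produces the derivation terms $Y^i(\go(\cdots))$; collecting signs then yields precisely the three-term classical expression for $\cL_Y$. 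This computation is the direct multivector generalisation of the one in the lemma preceding the proposition, which is the case $w\in L$ (matching $B$ with $d_{\sf dR}$).

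Finally I would conclude that these operators, together with $d_{\sf dR}$ and $\cS=\cT=0$, form a calculus. Since both $\textstyle\bigwedge^\bullet_A\!L$ and the chain space $\Hom_A(\textstyle\bigwedge^\bullet_A\!L,M)$ carry vanishing internal differentials, the homotopy formulae \eqref{panem} and \eqref{dellera1} reduce to the strict Cartan relations \eqref{true}, so it suffices to verify the latter. But the explicit $\iota$ and $\cL$ just obtained are exactly the classical contraction and Lie derivative of \cite{Rin:DFOGCA}, and $B$ transports to $d_{\sf dR}$ by Eq.~\eqref{oskar1}; hence \eqref{true} — in particular the Cartan magic formula $\cL_\gvf = [B,\iota_\gvf]$ — holds by Rinehart's classical results, or equivalently by a short direct check using the derivation property of the anchor and the definition of $[\cdot,\cdot]_{SN}$. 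I expect the Lie-derivative computation to be the main obstacle: the interplay of the two antisymmetrisations, the expansion of the iterated coproduct on primitives, and the splitting of $\mpact$ into an external and an internal action produce a large number of terms whose organisation into the three classical summands, and whose signs, require careful bookkeeping modelled on the preceding lemma.
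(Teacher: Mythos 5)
Your proposal is correct and follows essentially the same route as the paper: the same collapse of the double antisymmetrisation for $\iota$ via \eqref{psytrance5}, the same strategy for $\cL$ via \eqref{mpaction}, \eqref{regalate} and primitivity of elements of $L$ (a computation modelled on the proof of \eqref{oskar1}, which the paper likewise omits), and the same reduction --- vanishing internal differentials plus $\cS=\cT=0$ --- of the homotopy formulae \eqref{panem} and \eqref{dellera1} to the strict relations \eqref{true}. The only divergence is minor: where you close by invoking Rinehart's classical identities, the paper, after conceding these are ``of course known'', prefers to re-derive $\cL_Y=[d_{\sf dR},\iota_Y]$ by transporting Corollary \ref{tarrega} through the quasi-isomorphisms, which requires the additional observation \eqref{this} that $\cP$ (not just $\Alt$) is a morphism of mixed complexes.
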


In case $(A, L) \!=\! (C^\infty(Q), \gG(E))$ arises from a Lie algebroid $E\! \to\! Q$ over a smooth manifold
 $Q$, the above calculus is the one given by $E$-differential
 forms and $E$-multivector fields (as detailed in \cite[\S18]{CanWei:GMFNCA}), and if the Lie algebroid is given by the tangent bundle $TQ$,
 this yields the well-known Cartan calculus in differential geometry \cite{Car:LTDUGDLEDUEFP}.

 \begin{proof}[Proof of Proposition \ref{nyc}]
   That $\iota$ defined as in \eqref{bibo2} has the stated explicit form is quickly checked as follows: for two multivector fields $Y = Y^1 \wedge \cdots \wedge Y^p$ and $X = X^1 \wedge \cdots \wedge X^{n-p}$  as well as a form $\go \in \Hom_A(\textstyle\bigwedge_A^\bullet \!L,M)$, one has using \eqref{psytrance5}
 \begin{equation*}
    \begin{split}
      \iota_Y\go (X) &=  \iota_{\Alt(Y)}(\go \circ \cP)(\Alt(X))
      \\
      &= \textstyle\frac{1}{p!(n-p)!} \sum_{\gs, \tau} (-1)^{\gs + \tau}
      (\go \circ P) (Y^{\gs(1)} | \ldots| Y^{\gs(p)}| X^{\tau(1)}| \ldots | X^{\tau(n-p)} )
\\
      &= \go(Y^1 , \ldots,  Y^p, X^1, \ldots, X^{n-p} ).
    \end{split}
    \end{equation*}
 That $\cL$ as defined in \eqref{bibo3} also has the stated explicit form is much more laborious but follows from Eqs.~\eqref{psytrance6} and \eqref{regalate} by a computation very similar to the one that proved \eqref{oskar1}, which is why we omit it. That the so-defined operators define a homotopy calculus is of course known; more precisely, the (co)simplicial differentials being zero, this even furnishes a calculus fulfilling the customary identities \eqref{true}. More elegantly, the calculus identities can be directly derived from our general result in Theorem \ref{terzamissione} taking into account  that also $\cP$ can be promoted to a morphism of mixed complexes, that is,
 \begin{equation}
    \label{this}
    B \circ \Hom_A(\cP,M) = \Hom_A(\cP,M) \circ d_{\sf dR},
 \end{equation}
 holds as well,
 which is again proven by a straightforward computation along the lines of the proof of \eqref{oskar1} and will therefore be skipped.
One then has
 %\begin{small}
\begin{eqnarray*}
  \cL_Y 
  &
\!\!\!\!\!\!
\overset{{\scriptscriptstyle{\eqref{bibo3}}}}{=}
&
\!\!\!\!\!\!
\Hom_A({\Alt},M) \circ \cL_{\Alt(Y)} \circ \Hom_A(\cP,M)
\\
  &
\!\!\!\!\!\!
\overset{{\scriptscriptstyle{\eqref{true}}}}{=}
&
\!\!\!\!\!\!
\Hom_A({\Alt},M) \circ \big(B \circ \iota_{\Alt(Y)} + (-1)^{\deg Y} \iota_{\Alt(Y)} \circ B\big) \circ \Hom_A(\cP,M)
\\
  &
\!\!\!\!\!\!
\overset{{\scriptscriptstyle{\eqref{oskar1}, \eqref{this}}}}{=}
&
\!\!\!\!\!\!
d_{\sf dR} \circ \Hom_A({\Alt},M) \circ \iota_{\Alt(Y)} \circ \Hom_A(\cP,M)
\\
&&
+ (-1)^{\deg Y}
\Hom_A({\Alt},M) \circ 
\iota_{\Alt(Y)} \circ \Hom_A(\cP,M) \circ d_{\sf dR}
\\
  &
\!\!\!\!\!\!
\overset{{\scriptscriptstyle{\eqref{bibo2}}}}{=}
&
\!\!\!\!\!\!
    [d_{\sf dR}, \iota_Y],
    \end{eqnarray*}
%\end{small}
which concludes the proof.
   \end{proof}

    By functoriality, in homology the map $\cP$ also becomes a right inverse of $\Alt$ and from Eqs.~\eqref{oskar1}--\eqref{bibo3} immediately follows:

\begin{theorem}
  \label{transactions}
  Let $(A,L)$ be a Lie-Rinehart algebra with $L$ projective but not necessarily finitely generated as an $A$-module and $M$ an $A$-injective left $V\!L$-module. Then the HKR map $\mathrm{Alt}$ induces an isomorphism of BV modules (or noncommutative differential calculi) between $\big(\!\textstyle\bigwedge^\bullet_A \!L, \Hom_A(\textstyle\bigwedge^\bullet_A \!L, M)\big)$  and $\big(\!\Cotor^\bullet_{V\!L}(A,A), \Coext^{V\!L}_\bullet(A,M)\big)$. In particular, $\mathrm{Alt}$ (resp.\ its pull-back)
  %, along with the already shown Eq.~\eqref{oskar1},
  commutes with all possible calculus operators, that is,   
  \begin{eqnarray}
   \label{oskar1a}
   \Hom_A({\Alt},M) \circ B &=& d_{\sf dR} \circ \Hom_A({\Alt},M), \\
    \label{oskar2}
  \Hom_A({\Alt},M) \circ \iota_{\Alt(-)} &=& \iota_{(-)} \circ \Hom_A({\Alt},M),  \\
    \label{oskar3}
 \Hom_A({\Alt},M) \circ \cL_{\Alt(-)} &=& \cL_{(-)} \circ \Hom_A({\Alt},M), 
  \end{eqnarray}
  %for a multivector field $Y \in \textstyle\bigwedge^\bullet_A \!L$,
  when descending to homology.
  \end{theorem}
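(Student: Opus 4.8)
The plan is to deduce everything on homology from the chain-level facts already gathered in Lemma \ref{dumdidum}, in the lemma preceding Proposition \ref{nyc}, and in Proposition \ref{nyc} itself, so that no further explicit bialgebroid computation is required. The two underlying $k$-module isomorphisms are already supplied: the Gerstenhaber isomorphism $\textstyle\bigwedge_A^\bullet L \simeq \Cotor^\bullet_{V\!L}(A,A)$ and, from \eqref{erkaeltetmalwieder}, the isomorphism $\Coext^{V\!L}_\bullet(A,M) \simeq \Hom_A(\textstyle\bigwedge_A^\bullet L, M)$, valid since $M$ is $A$-injective. It therefore only remains to verify that the homology-level map induced by $\Hom_A(\Alt, M)$ intertwines all calculus operators, which amounts to \eqref{oskar1a}--\eqref{oskar3} together with the (automatic) vanishing of the homotopy operators $\cS, \cT$ on homology.

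First I would record that \eqref{oskar1a} is nothing but \eqref{oskar1}, already established at the chain level in the lemma before Proposition \ref{nyc}, and hence holds a fortiori on homology. Next, from $\cP \circ \Alt = \id$ on chains together with the contravariance of $\Hom_A(-, M)$ one immediately gets $\Hom_A(\Alt, M) \circ \Hom_A(\cP, M) = \Hom_A(\cP \circ \Alt, M) = \id$ already at the chain level. For the reverse composite I would invoke functoriality: since $\Alt$ is a quasi-isomorphism and $[\cP] \circ [\Alt] = \id$ on cohomology, the class $[\cP]$ is a two-sided inverse of $[\Alt]$, so $\Alt \circ \cP$ induces the identity on $H^\bullet(\cO)$. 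Applying the exact functor $\Hom_A(-, M)$ — here $A$-injectivity of $M$ is precisely what guarantees exactness and the identification of the resulting homology with $\Hom_A(\Cotor^\bullet, M)$ — then shows that $\Hom_A(\cP, M) \circ \Hom_A(\Alt, M) = \Hom_A(\Alt \circ \cP, M)$ descends to the identity on $\Coext^{V\!L}_\bullet(A,M)$.

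With this two-sided inverse in hand, \eqref{oskar2} and \eqref{oskar3} are purely formal. Substituting the defining equations \eqref{bibo2}, \eqref{bibo3} into the right-hand sides gives, for instance,
\begin{equation*}
\iota_{(-)} \circ \Hom_A(\Alt, M) = \Hom_A(\Alt, M) \circ \iota_{\Alt(-)} \circ \big(\Hom_A(\cP, M) \circ \Hom_A(\Alt, M)\big),
\end{equation*}
and on homology the parenthesised composite is the identity, so the right-hand side reduces to $\Hom_A(\Alt, M) \circ \iota_{\Alt(-)}$, which is \eqref{oskar2}; the identical manipulation applied to $\cL$ yields \eqref{oskar3}. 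Finally, the higher homotopy operators $\cS$ and $\cT$ vanish on both sides on homology — on the differential-geometric side by construction (Proposition \ref{nyc}), and on the $\Coext$/$\Cotor$ side because the homotopy calculus of Corollary \ref{tarrega} descends to a genuine BV module on homology, where the relations \eqref{true} hold strictly. Hence there is nothing further to check, and $\Hom_A(\Alt, M)$ together with the Gerstenhaber isomorphism of Lemma \ref{dumdidum} constitutes the asserted isomorphism of BV modules.

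The main obstacle is not computational but rather a matter of careful bookkeeping around the one-sided relation $\cP \circ \Alt = \id$: one must keep track of the fact that this only upgrades to a two-sided inverse after passing to (co)homology, that the contravariant functor $\Hom_A(-, M)$ reverses the order of the two composites (so that the chain-level identity and the homology-level identity concern \emph{different} composites), and that $A$-injectivity of $M$ is genuinely invoked to keep $\Hom_A(-, M)$ exact so that the homology is identified correctly. Once these points are handled, the remaining assertions are formal consequences of Theorem \ref{terzamissione} and Corollary \ref{tarrega}.
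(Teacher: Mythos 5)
Your proposal is correct and follows essentially the same route as the paper, whose entire proof is the remark preceding the theorem: by functoriality $\cP$ becomes a two-sided inverse of $\Alt$ on (co)homology, whence \eqref{oskar1a}--\eqref{oskar3} follow immediately from the chain-level identity \eqref{oskar1} and the definitions \eqref{bibo2}--\eqref{bibo3}. Your use of exactness of $\Hom_A(-,M)$ (via $A$-injectivity of $M$) to transfer $[\Alt\circ\cP]=\id$ through the pull-back is a legitimate way to fill in the step the paper leaves implicit; equivalently one could note that $\Hom_A(\cP,M)$ is a chain-level right inverse of the quasi-isomorphism $\Hom_A(\Alt,M)$ of Lemma \ref{dumdidum}, hence a two-sided inverse on homology.
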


\subsection{Lie-Rinehart cohomology and cyclic homology}
We close this example section by a few words on the relation between Lie-Rinehart cohomology and cyclic cohomology. Ignoring the zero differential in the mixed complex $\big(\!\Hom_A(\textstyle\bigwedge_A^\bullet \!L,M), 0, d_{\sf dR}\big)$, one obtains a Chevalley-Eilenberg cochain complex that generalises the classical complex computing Lie algebra cohomology:

\begin{dfn}{\cite{Rin:DFOGCA}}
The cohomology of the cochain complex $\big(\!\Hom_A(\textstyle\bigwedge_A^\bullet \!L,M), d_{\sf dR}\big)$  denoted by $H^\bullet(L,M)$ is called the {\em Lie-Rinehart cohomology} (with values in $M$) of $(A,L)$.
  \end{dfn}

Dually to Lemma \ref{dumdidum}, it is a well-known fact that if $L$ is $A$-projective, then the Lie-Rinehart cohomology is an $\Ext$-group again, see \cite[\S4]{Rin:DFOGCA}. More precisely, together with Eq.~\eqref{erkaeltetmalwieder} we have
$$
\Ext_{V\!L}^\bullet(A,M) \simeq H^\bullet(L, M), \qquad \Coext^{V\!L}_\bullet(A,M) \simeq \Hom_A(\textstyle\bigwedge^\bullet_A \!L, M),
$$
which allows us to state:

\begin{prop}
  Let $(A,L)$ be a Lie-Rinehart algebra, where $L$ is projective but not necessarily finite as an $A$-module, and $M$ a left $V\!L$-module. Then the HKR map \eqref{hkr} induces the isomorphisms
\begin{footnotesize}
  \begin{equation*}
    \begin{array}{rcl}
      HC_n(V\!L, M) &\!\!\!\!\!\simeq&\!\!\!\!\! \Hom_A(\textstyle\bigwedge^n_A \!L, M)/d_{\sf dR}\big(\!\Hom_A(\textstyle\bigwedge^{n-1}_A \!L, M) \!\big) \oplus H^{n-2}(L, M) \oplus H^{n-4}(L, M) \oplus \cdots,
      \\[3pt]
HC^n(V\!L, M) &\!\!\!\!\!\simeq&\!\!\!\!\! H^{n}(L, M) \oplus H^{n-2}(L, M) \oplus \cdots,
      \end{array}
  \end{equation*}
  \end{footnotesize}
where $HC_\bullet$ denotes the cyclic homology defined by the cyclic module \eqref{lakritz4}, and $HC^\bullet$ the cyclic cohomology with respect to the cocyclic module \eqref{nadennwommama} for the trivial contraaction.
  \end{prop}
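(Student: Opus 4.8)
The plan is to deduce both decompositions from the HKR quasi-isomorphism, reducing the cyclic (co)homology of the large (co)cyclic modules \eqref{lakritz4} and \eqref{nadennwommama} to a computation on the forms $\Omega^n := \Hom_A(\textstyle\bigwedge^n_A L, M)$. I will use the standard fact (see \cite{Lod:CH}) that a morphism of mixed (co)complexes which is a quasi-isomorphism on the underlying Hochschild complexes induces isomorphisms on cyclic homology, cyclic cohomology, and their periodic variants; so it suffices to identify the reduced objects and compute there. On the chain side, the Lemma preceding Proposition \ref{nyc} shows that $\Hom_A(\Alt,M)$ is exactly such a morphism of mixed complexes from $(C_\bullet(V\!L,M),d,B)$ to $(\Omega^\bullet,0,d_{\sf dR})$: Eq.~\eqref{oskar1} records the intertwining $\Hom_A(\Alt,M)\circ B = d_{\sf dR}\circ\Hom_A(\Alt,M)$, and Lemma \ref{dumdidum} gives that it is a quasi-isomorphism (for $L$ flat, with \emph{no} finiteness hypothesis, via the argument of \cite[Thm.~3.2.2]{Lod:CH}). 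No $A$-injectivity of $M$ is needed here, since \eqref{herbst} is a quasi-isomorphism for every $V\!L$-module $M$; thus the Hochschild homology of \eqref{lakritz4} is $HH_n=\Omega^n$. On the cochain side, the cocyclic module \eqref{nadennwommama} with trivial contraaction has Hochschild cohomology $HH^\bullet=\Ext^\bullet_{V\!L}(A,M)\simeq H^\bullet(L,M)$ by \eqref{erkaeltetmalwieder} and \cite[\S4]{Rin:DFOGCA}, which is where $A$-projectivity of $L$ enters.

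For the homological statement I then compute directly. Since $b=0$ on $(\Omega^\bullet,0,d_{\sf dR})$, the total differential of Connes' $(b,B)$-bicomplex is simply $B=d_{\sf dR}$, so its total complex in degree $n$ is $\big(\bigoplus_{i\ge 0}\Omega^{n-2i},D\big)$ with $D(x_0,x_1,x_2,\dots)=(d_{\sf dR}x_1,d_{\sf dR}x_2,\dots)$; note this sum is finite as $\Omega^{n-2i}=0$ for $2i>n$. A direct kernel/image count gives $\Ker D_n=\Omega^n\oplus Z^{n-2}\oplus Z^{n-4}\oplus\cdots$ and $\operatorname{im}D_{n+1}=d_{\sf dR}\Omega^{n-1}\oplus d_{\sf dR}\Omega^{n-3}\oplus\cdots$, where $Z^k$ denotes the $d_{\sf dR}$-closed forms, whence
\[ HC_n \cong \Omega^n/d_{\sf dR}\Omega^{n-1}\ \oplus\ H^{n-2}(L,M)\oplus H^{n-4}(L,M)\oplus\cdots, \]
which is the first asserted isomorphism. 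The asymmetry between the top summand $\Omega^n/d_{\sf dR}\Omega^{n-1}$ and the lower summands $H^{n-2i}(L,M)$ is precisely the edge effect of the $i=0$ column, on which $D$ acts by zero.

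For the cohomological statement the reduced situation carries the \emph{nonzero} Hochschild codifferential with $HH^\bullet=H^\bullet(L,M)$, so one cannot read off the answer directly. I would run Connes' periodicity (``SBI'') exact sequence, which relates $HC^\bullet$, $HC^{\bullet-2}$ and $HH^\bullet=H^\bullet(L,M)$ with connecting maps induced by $B$. The key input is that these connecting maps \emph{vanish}, i.e.\ $B$ induces the zero map on $H^\bullet(L,M)$; granting this, the sequence splits into short exact sequences $0\to HH^n\to HC^n\xrightarrow{S}HC^{n-2}\to 0$, and induction on $n$ yields $HC^n\cong\bigoplus_{i\ge 0}H^{n-2i}(L,M)$, with the top term being $H^n(L,M)$ itself (not a quotient), exactly as stated.

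The main obstacle is precisely this last input: proving $B_\ast=0$ on $H^\bullet(L,M)$. Conceptually, the HKR-transported Connes operator is the ``same'' $B$ in both cases, but it acts on complexes with different Hochschild differentials ($0$ on the chain side, $d_{\sf dR}$ on the cochain side), and its effect must be computed separately. On the chain side this is already done in the excerpt: Eq.~\eqref{oskar1}/\eqref{psytrance4} show it becomes $d_{\sf dR}$ itself, which is why no collapse to a direct sum occurs there. On the cochain side I expect to run an \emph{entirely parallel} explicit computation — transporting the cocyclic operator of \eqref{nadennwommama} through $\Alt$ and the trivial contraaction — and to find that the reduced $B$ on $\Omega^\bullet$ is a $d_{\sf dR}$-coboundary, hence nullhomotopic and so zero on cohomology. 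A nullhomotopic $B$ moreover renders the reduced mixed complex formal (isomorphic in the homotopy category to $(\Omega^\bullet,d_{\sf dR},0)$), which both justifies the splitting of the SBI sequence and rules out surviving higher (Massey-type) differentials. Throughout, the only hypotheses used are $A$-projectivity of $L$ (for $\Ext^\bullet_{V\!L}(A,M)\simeq H^\bullet(L,M)$) and $\Q\subseteq k$; no finiteness of $L$ is required, since the HKR quasi-isomorphism persists in the infinite-dimensional case, as noted after Lemma \ref{dumdidum}.
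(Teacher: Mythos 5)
Your argument for the first isomorphism is correct and is the paper's own argument: Lemma \ref{dumdidum} together with Eq.~\eqref{oskar1} exhibits $\Hom_A(\Alt,M)$ as a quasi-isomorphism of mixed complexes, and your kernel/image count is exactly what the paper means by ``computing the total homology of the trivial mixed complex $\big(\!\Hom_A(\bigwedge^\bullet_A L,M),0,d_{\sf dR}\big)$''; your observation that no $A$-injectivity of $M$ is needed for this half is also right.

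For the second isomorphism there is a genuine gap, in two places. First, the statement on which everything depends --- that the cyclic coboundary $B$ of the cocyclic module \eqref{nadennwommama} (with trivial contraaction) induces the zero map on the column cohomology $\Ext^\bullet_{V\!L}(A,M)\simeq H^\bullet(L,M)$ --- is nowhere established in your proposal: you state it as an expectation (``I expect to run an entirely parallel explicit computation \dots and to find that \dots''). This is precisely the point at which the paper inserts external input, namely a direct computation with the explicit cocyclic structure or the $A$-linear dualisation of \cite[Thm.~2.16 \& 3.14]{KowPos:TCTOHA}; without one of these, the proof does not exist. Second, even granting that input in the form you aim for (the transported $B$ nullhomotopic with respect to $d_{\sf dR}$), the bridge you build from it is invalid: a single nullhomotopy $B=[\delta,h]$ kills only the $d_1$ of the column-filtration spectral sequence, while $d_2$ is induced by composing $B$ with the homotopy $h$ and need not vanish, so ``nullhomotopic $B$ implies the mixed complex is formal'' is false in general (formality requires a coherent hierarchy of higher homotopies). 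Relatedly, vanishing of $B_*$ on $HH^\bullet$ is weaker than vanishing of the SBI connecting maps, and even when those vanish, the resulting short exact sequences $0\to HC^{n-2}\to HC^{n}\to HH^{n}\to 0$ need not split over an arbitrary ground ring $k$. The clean way to close all of these holes at once is a strict chain-level statement: in characteristic zero the comparison map intertwines $B$ with the zero operator identically, not merely up to homotopy, giving a genuine morphism of mixed complexes into $\big(\!\Hom_A(\bigwedge^\bullet_A L,M),d_{\sf dR},0\big)$ that is a quasi-isomorphism on columns; this identifies $HC^\bullet$ with the cyclic cohomology of a mixed complex whose $B$-operator vanishes, which is visibly $\bigoplus_{i\geq 0}H^{n-2i}(L,M)$. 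Reorganising your plan around this strict vanishing, rather than around nullhomotopy plus SBI, is what is actually required.
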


\begin{proof}
  The first isomorphism follows from Lemma \ref{dumdidum} together with Eq.~\eqref{oskar1} by computing the total homology of the trivial mixed complex  $\big(\!\Hom_A(\textstyle\bigwedge_A^\bullet \!L,M), 0, d_{\sf dR}\big)$. The second isomorphism follows from the fact that the cyclic boundary $B$ associated to the cocyclic module \eqref{nadennwommama} for the trivial contraaction in case of a cocommutative bialgebroid induces the zero map in the cohomology of the columns of the respective mixed complex, which can be either computed directly or obtained by simply $A$-linearly dualising \cite[Thm.~2.16 \& 3.14]{KowPos:TCTOHA}.  
%  as in \cite[\S9.8.8]{Wei:AITHA}.
  \end{proof}

\appendix

\section{The cyclic category}

\subsection{Cocyclic and para-cocyclic modules}

%
%% \label{defilambda}
%% {\em Para-cocyclic $k$-modules} \cite[p.~164]{GetJon:TCHOCPA} generalise 
%% cocyclic $k$-modules by dropping the
%% condition that the cyclic operator
%% implements an action of
%% $\mathbb{Z}/(n+1)\mathbb{Z}$ on the
%% degree $n$ part. Thus, a para-cocyclic $k$-module is a
%% cosimplicial $k$-module
%% $(C^\bullet,\delta_\bullet,\sigma_\bullet)$
%% together with $k$-linear maps 
%% $\tau_n : C^n \rightarrow C^n$
%% subject to
%% \begin{equation*}
%% \label{belleville}
%% \begin{array}{cc}
%% \begin{array}{rcll}
%% \tau_n \circ \gd_i &\!\!\!\!\!=&\!\!\!\!\! \left\{\!\!\!
%% \begin{array}{l}
%% \gd_{i-1}\circ \tau_{n-1} \\
%%  \gd_n 
%% \end{array}\right. & \!\!\!\!\!\!\!\!\! 
%% \begin{array}{l} \mbox{if} \ 1
%% \leq i \leq n, 
%%  \\ \mbox{if} \ i = 0,
%%  \end{array}
%% \end{array}
%% &
%% \begin{array}{rcll}
%% \tau_n \circ \sigma_i &\!\!\!\!\!=&\!\!\!\!\! \left\{\!\!\!
%% \begin{array}{l}
%% \sigma_{i-1} \circ \tau_{n+1} \\
%%  \sigma_n \circ \tau^2_{n+1} 
%% \end{array}\right. & \!\!\!\!\!\!\!\!\!
%% \begin{array}{l} \mbox{if} \ 1 \leq i
%%  \leq n,  
%% \\ \mbox{if} \ i = 0. 
%% \end{array} 
%% \end{array}
%% \end{array}
%% \end{equation*}
%% These relations imply that $\tau_n^{n+1}$
%% commutes with all the cofaces and
%% codegeneracies. 
%% If one additionally has 
%% $$
%% \tau_n^{n+1} = \id,
%% $$
%% the para-cocyclic module is called {\em cyclic}. 

Recall from, {\em e.g.}, \cite[\S6.1]{Lod:CH} that a cyclic
$k$-module is a simplicial
$k$-module $(C_\bullet,d_\bullet,s_\bullet)$ 
resp.\ a cocyclic $k$-module is a
cosimplicial $k$-module
$(C^\bullet,\delta_\bullet,\sigma_\bullet)$
together with $k$-linear maps 
$t: C_n \rightarrow C_n$ resp.\
$\tau: C^n \rightarrow C^n$ in degree $n$,
satisfying, respectively
\begin{equation}
\label{belleville}
\!\!\!\!\!\!\!\!
\begin{array}{cc}
\begin{array}{rcl}
d_i \circ t  &\!\!\!\!\!\!=&\!\!\!\!\!\! \left\{\!\!\!
\begin{array}{ll}
t \circ d_{i-1} 
& \!\!\!\! \mbox{if} \ 1 \leq i \leq n, \\
 d_n & \!\!\!\! \mbox{if} \
i = 0,
\end{array}\right.
\\
\
\\
s_i \circ t &\!\!\!\!\!\!=&\!\!\!\!\!\! \left\{\!\!\!
\begin{array}{ll}
t \circ s_{i-1} & \!\!\!\!
\mbox{if} \ 1 \leq i \leq n, \\
 t^2 \circ s_n
 & \!\!\!\! \mbox{if} \
 i = 0,
\end{array}\right.
\\
\
\\
 t^{n+1} &\!\!\!\!\!=&\!\!\!\!\! \id_{C_n},
\end{array}
\!\!\!\!&\!\!\!\!
\begin{array}{rcll}
\tau \circ \gd_i &\!\!\!\!\!=&\!\!\!\!\! \left\{\!\!\!
\begin{array}{l}
\gd_{i-1}\circ \tau \\
 \gd_n 
\end{array}\right. & \!\!\!\!\!\!\!\!\! \begin{array}{l} \mbox{if} \ 1 \leq i \leq n,
 \\ \mbox{if} \ i = 0,
\end{array}
\\
\
\\
\tau \circ \sigma_i &\!\!\!\!\!=&\!\!\!\!\!
\left\{\!\!\!
\begin{array}{l}
\sigma_{i-1} \circ \tau \\
 \sigma_n \circ \tau^2 
\end{array}\right. & \!\!\!\!\!\!\!\!\!
\begin{array}{l} \mbox{if} \ 1 \leq i
 \leq n,  
 \\ \mbox{if} \ i = 0, \end{array}
\\
\
\\
 \tau^{n+1} &\!\!\!\!\!=&\!\!\!\!\! \id_{C^n}.
\end{array}
\end{array}
\end{equation}
In the definition of a {\em para-cyclic} resp.\ {\em para-cocyclic} $k$-module one drops the last identity, that is, the cyclic resp.\ cocyclic operator does not power to the identity any more.
More conceptually, 
cyclic $k$-modules resp.\ cocyclic ones can be viewed as functors 
$\gL^\op \to \kmod$ resp.\
$\gL \to  \kmod$, where $\Lambda$ is Connes' cyclic
category, see {\em loc.~cit.} for a detailed description. 
A cyclic $k$-module allows to introduce the {\em cyclic} (or {\em Connes-Rinehart-Tsygan}) {\em boundary} 
\begin{equation}
\label{e-mantra}
B := (1 - (-1)^n t) s_{-1} \cN,
\end{equation}
where $s_{-1} := t s_n$ is the {\em extra degeneracy} and $\cN := \sum^n_{i=0} (-1)^{i+n} t^i_n$ the {\em norm operator}; an analogous construction leads to the cyclic {\em co\/}boundary in case of a cocyclic $k$-module. In both cases, together with the respective (co)simplicial (co)boundary summing all (co)faces with alternating sign, this leads to a mixed complex, the total (co)homology of which defines cyclic (co)homology.

\subsection{The cyclic dual}
\label{boing}
It is a well-known fact (see \cite{Con:CCEFE} or \cite[Prop.~6.1.11]{Lod:CH}) that the cyclic category $\gL$ is self-dual, which allows to identify cocyclic $k$-modules and cyclic $k$-modules, even in infinitely many ways due to the autoequivalences of the cyclic category \cite[\S6.1.14]{Lod:CH}. The standard choice to pass from a cocyclic module $(X^\bullet, \gd_\bullet, \gs_\bullet, \tau)$ to a cyclic module  $(X_\bullet, d_\bullet, s_\bullet, t)$ is given by setting $X_n := X^n$ for all $n \in \N$ along with
\begin{equation}
\label{cyclicdual}
  d_0 := \gs_{n-1} \tau, \quad d_i := \gs_{i-1}, \quad s_j := \gd_j, \quad t := \tau^{-1}
\end{equation}
for $1 \leq i \leq n$ and $0 \leq j \leq n$. Note that in this convention the last coface $\gd_{n+1}$ is not used.

\section{Algebraic operads and (cyclic) opposite modules}
\label{pamukkale}

In this appendix, we briefly present the necessary material on operads, (cyclic) opposite modules over operads, and their connection to Gerstenhaber algebras resp.\ calculi. Modern expositions for more detailed information on operads can be found, {\em e.g.}, in \cite{LodVal:AO, Lei:HOHC}.

\subsection{Operads and Gerstenhaber algebras}
\label{pamukkale1}

A {\em non-$\gS$ operad} $\cO$ in the category $\kmod$
of $k$-modules is a sequence $\{\cO(n)\}_{n \geq 0}$ of $k$-modules 
endowed with $k$-bilinear operations $\circ_i: \cO(p) \otimes \cO(q) \to \cO({p+q-1})$ for $i = 1, \ldots, p$
subject to
%(see, for example, \cite[Def.~1.1]{Mar:MFO}): 
\begin{eqnarray}
\label{danton}
\nonumber
\gvf \circ_i \psi &=& 0 \qquad \qquad \qquad \qquad \qquad \! \mbox{if} \ p < i \quad \mbox{or} \quad p = 0, \\
(\varphi \circ_i \psi) \circ_j \chi &=& 
\begin{cases}
(\varphi \circ_j \chi) \circ_{i+r-1} \psi \qquad \mbox{if} \  \, j < i, \\
\varphi \circ_i (\psi \circ_{j-i +1} \chi) \qquad \hspace*{1pt} \mbox{if} \ \, i \leq j < q + i, \\
(\varphi \circ_{j-q+1} \chi) \circ_{i} \psi \qquad \mbox{if} \ \, j \geq q + i.
\end{cases}
\end{eqnarray}
Call the operad {\em unital} if there is an {\em identity} $\mathbb{1} \in \cO(1)$ such that 
$
\gvf \circ_i \mathbb{1} = \mathbb{1} \circ_1 \gvf = \gvf
$ 
for all $\gvf \in \cO(p)$ and $i \leq p$, and call the operad {\em with multiplication} if there exist a {\em multiplication}  $\mu \in \cO(2)$ and a {\em unit} $e \in \cO(0)$ such that $\mu \circ_1 \mu = \mu \circ_2 \mu$ and 
$\mu \circ_1 e = \mu \circ_2 e = \mathbb{1}$. An operad with multiplication will be denoted $(\cO, \mu, e)$.
Such an object naturally defines a cosimplicial $k$-module %\cite{McCSmi:ASODHCC}
given by $\cO^p := \cO(p)$ with faces and degeneracies for $\gvf \in \cO(p)$ given by $\gd_0 \gvf := \mu \circ_2 \gvf$, $\gd_i \gvf := \gvf \circ_i \mu$ for $i = 1, \ldots, p$, and $\gd_{p+1} \gvf := \mu \circ_1 \gvf$, along with $\sigma_j(\gvf) := \gvf \circ_{j+1} e$ for $j = 0, \ldots, p-1$. One obtains a cochain complex denoted by the same symbol $\cO$, with $\cO(n)$ in degree $n$, differential $\cO(n) \to \cO({n+1})$ given by $\gd := \sum^{n+1}_{i=0} (-1)^i \gd_i$, and cohomology
$
H^\bullet(\cO) := H(\cO, \gd).
$ 
Define then the {\em cup product} 
\begin{equation}
\label{nuvole}
\psi \smallsmile \gvf := (\mu \circ_2 \psi) \circ_1 \gvf \in \cO(p+q),
\end{equation}  
for $\gvf \in \cO(p)$ and $\psi \in \cO(q)$. As a consequence, $(\cO, \smallsmile, \gd)$ determines a dg algebra. One furthermore defines the {\em Gerstenhaber bracket} as 
\begin{equation}
\label{naemlichhier}
{\{} \varphi,\psi \}
:= \varphi\{\psi\} - (-1)^{(p-1)(q-1)} \psi\{\varphi\}, 
\end{equation}
where $\varphi\{\psi\} := \sum^{p}_{i=1}
(-1)^{(q-1)(i-1)} \varphi \circ_i \psi  \in \cO({p+q-1})$ is
%the {\em brace} \cite{Ger:TCSOAAR, Get:BVAATDTFT},
the sum over all possible partial compositions.
Observe that $\{\mu,\mu\} = 0$ as well as 
\begin{equation}
\label{immaginedellacitta`}
\gd \varphi = (-1)^{p+1} \{\mu, \varphi \}.
\end{equation}
It is a well-known result that descending to cohomology the triple $(H^\bullet(\cO), \smallsmile, \{\cdot, \cdot\})$ constitutes a Gerstenhaber algebra \cite{Ger:TCSOAAR, McCSmi:ASODHCC}.

\subsection{(Cyclic) opposite $\cO$-modules {\cite{Kow:GABVSOMOO}}}
\label{dauerregen}
Let $\cO$ be an operad with partial composition denoted by $\circ_i$, as above.
A {\em (left) opposite $\cO$-module} is
a sequence of $k$-modules $\{ \cM(n) \}_{n \geq 0}$ together with $k$-linear
operations,  
$
        \bullet_i : 
        \cO(p) \otimes \cM(n) \to \cM({n-p+1})
$
for
$
i = 1, \ldots, n- p +1,
%\quad 0 \leq p \leq n, %\qquad |p| := p-1
$
declared to be zero if $p > n$, and 
subject to   
\begin{equation}
\label{TchlesischeStr}
\gvf \bullet_i (\psi \bullet_j x) = 
\begin{cases} 
\psi \bullet_j (\gvf \bullet_{i + q - 1}  x) \quad & \mbox{if} \ j < i, % \leq n - |p|-|q|, 
\\
(\gvf \circ_{j-i+1} \psi) \bullet_{i}  x \quad & \mbox{if} \ j - p < i \leq j, \\
\psi \bullet_{j-p + 1} (\gvf \bullet_{i}  x) \quad & \mbox{if} \ 1 \leq i \leq j - p,
\end{cases}
\end{equation}
for $\gvf \in \cO(p)$, $\psi \in \cO(q)$, and $x \in
\cM(n)$, where $p > 0$, $q \geq 0$, $n \geq 0$
(in case $p=0$ delete the middle relation).
%
% the index $i$ runs from $1$ to $n+1$, and the above relations have to be read as
% \begin{equation}
%\label{TchlesischeStr0}
%\gvf \bullet_i (\psi \bullet_j x) = 
%\begin{cases} 
%\psi \bullet_j (\gvf \bullet_{i + q - 1}  x) \quad & \mbox{if} \ j < i, % \leq n + 1 -|q|, 
%\\
%\psi \bullet_{j+1} (\gvf \bullet_{i}  x) \quad & \mbox{if} \ 1 \leq i \leq j.
%\end{cases}
%\end{equation}
An opposite $\cO$-module is called {\em unital} if  
% \begin{equation}
%\label{TchlesischeStr-1}
$
\mathbb{1} \bullet_i x = x$ for $i = 1, \ldots, n$
%\end{equation}
and all $x \in \cM(n)$.

%% \begin{example}
%% \label{schnee1}
%% For a $k$-module $X$, one defines the {\em endomorphism operad} $\mathcal{E}\hspace*{-1pt}{nd}_\ikks$ by  $\mathcal{E}\hspace*{-1pt}{nd}_\ikks(p) := \Hom(X^{\otimes p}, X)$ with identity element $\mathbb{1} := \id_\ikks$. A unital opposite module over this operad is defined by $\cM_\ikks(n) := X^{\otimes n +1 }$ along with
%% composition maps defined by
%% $$
%% \gvf \bullet_i (x_0, \ldots, x_n) := (x_0, \ldots, x_{i-1}, \gvf(x_{i}, \ldots, x_{i+p-1}), x_{i+p}, \ldots, x_n),
%% $$
%% for  $i= 1, \ldots, n -p +1$, 
%% where $\gvf \in \mathcal{E}\hspace*{-1pt}{nd}_\ikks(p)$ and $x := (x_0, \ldots, x_n) := x_0 \otimes \cdots \otimes x_n \in \cM_\ikks(n)$. 
%% When $X$ happens to be an associative $k$-algebra, the operad $\mathcal{E}\hspace*{-1pt}{nd}_\ikks$ becomes an operad with multiplication.
%% \end{example}

A {\em cyclic (unital, left) opposite $\cO$-module} 
is a (unital, left) opposite $\cO$-module $\cM$ 
endowed with two additional structures:
an {\em extra} ($k$-linear) composition map
$$
\bullet_0: \cO(p) \otimes \cM(n) \to \cM({n-p+1}), \quad 0 \leq p \leq n+1,
$$
declared to be zero if $p > n+1$ such that the relations
\eqref{TchlesischeStr} and unitality are fulfilled for $i=0$ as well;
%
%% , that is, 
%% for $\gvf \in \cO(p)$, $\psi \in \cO(q)$, and $x \in \cM(n)$, with $q \geq 0$, $n \geq 0$, 
%% %and $i = 1, \ldots, n-|q|$
%% \begin{eqnarray}
%% \label{SchlesischeStr}
%% \gvf \bullet_i (\psi \bullet_j x) \!\!\!\!&=\!\!\!\!& 
%% \begin{cases} 
%% \psi \bullet_j (\gvf \bullet_{i + q - 1}  x)  & \mbox{if} \ j < i, % \leq n - |p|-|q|, 
%% \\
%% (\gvf \circ_{j-i+1} \psi) \bullet_{i}  x  & \mbox{if} \ j - p < i \leq j, \\
%% \psi \bullet_{j-p + 1} (\gvf \bullet_{i}  x)  & \mbox{if} \ 0 \leq i \leq j - p,
%% \end{cases} \qquad \mbox{(cases for $p > 0$)\qqquad}
%% \\
%% \label{SchlesischeStr0}
%% \gvf \bullet_i (\psi \bullet_j x) \!\!\!\!&=\!\!\!\!& 
%% \begin{cases} 
%% \psi \bullet_j (\gvf \bullet_{i + q - 1}  x)  & \mbox{if} \ j < i, % \leq n + 1 -|q|, 
%% \\
%% \psi \bullet_{j+1} (\gvf \bullet_{i}  x)  & \mbox{if} \ 0 \leq i \leq j,
%% \end{cases}
%% \qqquad\quad  \mbox{(cases for $p = 0$)}
%% \\
%% \label{SchlesischeStr-1}
%% \mathbb{1} \bullet_i x \!\!\!\!&=\!\!\!\!&  x \hspace*{3cm} \, \mbox{for \ } i = 0, \ldots, n,
%% \end{eqnarray}
moreover, a degree-preserving morphism $t: \cM(n) \to \cM(n)$ for all $ n \geq 1$ with the property $t^{n+1} = \id_{\cM(n)}$ and such that
\begin{equation}
\label{lagrandebellezza1}
t(\gvf \bullet_{i} x) = \gvf \bullet_{i+1} t(x),  \qquad i = 0, \ldots, n-p,
\end{equation}
holds for $\gvf \in \cO(p)$ and $x \in \cM(n)$.

See \cite{Kow:GABVSOMOO}
or \cite{FioKow:HBOCANCC} for more information, examples, and illustrations on (cyclic) opposite $\cO$-modules (termed ``comp modules'' in the former).

%
%% \begin{example}
%% \label{schnee2}
%% The opposite module $\cM_\ikks$ from Ex.~\ref{schnee1} can be easily extended to a cyclic opposite module over  $\mathcal{E}\hspace*{-1pt}{nd}_\ikks$: 
%% define
%% $
%% \gvf \bullet_0 (x_0, \ldots, x_n) := \big(\gvf(x_0, \ldots, x_{p-1}), x_p, \ldots, x_n\big),
%% $
%% along with 
%% $
%% t(x_0, \ldots, x_n) := (x_n, x_0, \ldots, x_{n-1}).
%% $
%% Observe, however, that in more general contexts such as for Hopf algebroids things are not as simple; see \cite[Eq.~(6.20)]{Kow:GABVSOMOO}. 
%% \end{example}
%
%
%

A cyclic
unital opposite module 
$(\cM,t)$ 
over an operad with multiplication 
$(\cO, \mu, e)$ carries the structure of a cyclic $k$-module \cite[Prop.~3.5]{Kow:GABVSOMOO}:
the faces {$d_i\colon \cM(n) \to \cM({n-1})$ and} degeneracies $s_j\colon \cM(n) \to \cM({n+1})$ of the underlying simplicial object given by 
\begin{equation}
\label{colleoppio}
\begin{array}{rcll}
d_i(x) & = & \mu \bullet_{i} x, & i = 0, \ldots, n-1, \\
d_n(x) & = & \mu \bullet_0 t (x), & \\
s_j (x)& = & e \bullet_{j+1} x, & j = 0, \ldots, n, \\
\end{array}
\end{equation}
where $x \in \cM(n)$, can be easily shown to be compatible with the cyclic operator $t$ in the sense of Eqs.~\eqref{belleville}.
Defining %${\cM}_n :=\cM(n)$ with differential $b\colon \cM_n\to \cM_{n-1}$ given by
the differential $b\colon \cM(n)\to \cM({n-1})$ by
$
b=\sum_{i=0}^n (-1)^id_i,
$
the pair $(\cM, b)$ becomes a chain complex, and by means of
$B\colon \cM(n) \to \cM(n+1)$ defined as in Eq.~\eqref{e-mantra},
%
%:= (\mathrm{id}-t) \, s_{-1} \, N$, where 
%$N := \sum_{i=0}^n (-1)^{in} t^i$ and $s_{-1} := t \, s_n = e \bullet_0 -$,
%
the triple $(\cM, b, B)$ %even 
becomes a mixed (chain) complex. Here, the
extra degeneracy turns out as $s_{-1} := t \, s_n = e \bullet_0 -$, explaining the terminology {\em extra operation} for $\bullet_0$.
%Explicitly, one has by \eqref{lagrandebellezza2}
%\begin{equation}
%\label{cine40}
%       s_{-1}(x) = t \, s_n(x) = t(e \bullet_{n+1} x) =  t(e \bullet_{n+1} t^{n+1}x)
%=  t^{n+2}(e \bullet_0 x) = e \bullet_0 x,
%\end{equation}
%and hence
%$ 
%        B(x) = \sum^n_{i=0} (-1)^{in} (\mathrm{id}-t) e \bullet_0 t^i(x).
%$
%
To simplify matters, we will usually work on the normalised complex $\widebar{\cM}$,
the quotient of $\cM$ by the (acyclic) subcomplex spanned by the
images of the degeneracy maps. For example, on $\widebar{\cM}$ the cyclic coboundary simplifies to
$s_{-1} \, N$, which in this case becomes explicitly
\begin{equation*}
\label{extra2}
B(x) = \textstyle\sum\limits^n_{i=0} (-1)^{in} e \bullet_0 t^i(x).
\end{equation*}
Likewise, $\widebar{\cO}$ denotes the intersection of the kernels of the codegeneracies in the cosimplicial $k$-module obtained from the operad with multiplication $(\cO, \mu, e)$.
%Notice that $\widebar{\cO}(0)=\cO(0)$, as the intersection of an empty family of subsets of $\cO(0)$ is $\cO(0)$ itself.

The nice feature of
cyclic opposite $\cO$-modules and basically the reason why they were introduced is that they automatically turn into homotopy BV modules in the sense of \S\ref{esregnetdurchdieDecke}, see Theorem \ref{terzamissione} (which is Theorem 5.4 in \cite{Kow:GABVSOMOO}).

\section{Identities for left and right Hopf algebroids}
\label{nebula}
In this appendix, we gather a couple of compatibility conditions between the (inverse of the) two Hopf-Galois maps in question and the (co)product and (co)unit in a left bialgebroid. Recall that a left bialgebroid $(U, A)$ is called a left resp.\ right Hopf algebroid if
the map 
$ \alpha_\ell$ resp.\ $\ga_r$ from Eqs.~\eqref{nochmehrRegen} is a bijection in which case we write 
$
 u_+ \otimes_\Aopp u_-  :=  \alpha_\ell^{-1}(u \otimes_\ahha 1)
 $
 and
 $
   u_{[+]} \otimes_\ahha u_{[-]}  :=  \alpha_r^{-1}(1 \otimes_\ahha u),
   $
   respectively.
One then easily verifies that for a left Hopf algebroid
\begin{eqnarray}
\label{Sch1}
u_+ \otimes_\Aopp  u_- & \in
& U \times_\Aopp U,  \\
\label{Sch2}
u_{+(1)} \otimes_\ahha u_{+(2)} u_- &=& u \otimes_\ahha 1 \quad \in U_{\!\ract} \! \otimes_\ahha \! {}_\lact U,  \\
\label{Sch3}
u_{(1)+} \otimes_\Aopp u_{(1)-} u_{(2)}  &=& u \otimes_\Aopp  1 \quad \in  {}_\blact U \! \otimes_\Aopp \! U_\ract,  \\
\label{Sch4}
u_{+(1)} \otimes_\ahha u_{+(2)} \otimes_\Aopp  u_{-} &=& u_{(1)} \otimes_\ahha u_{(2)+} \otimes_\Aopp u_{(2)-},  \\
\label{Sch5}
u_+ \otimes_\Aopp  u_{-(1)} \otimes_\ahha u_{-(2)} &=&
u_{++} \otimes_\Aopp u_- \otimes_\ahha u_{+-},  \\
\label{Sch6}
(uv)_+ \otimes_\Aopp  (uv)_- &=& u_+v_+ \otimes_\Aopp v_-u_-,
\\
\label{Sch7}
u_+u_- &=& s (\varepsilon (u)),  \\
\label{Sch8}
\varepsilon(u_-) \blact u_+  &=& u,  \\
\label{Sch9}
(s (a) t (b))_+ \otimes_\Aopp  (s (a) t (b) )_-
&=& s (a) \otimes_\Aopp s (b)
\end{eqnarray}
holds, 
where in  \eqref{Sch1}  we mean %the Takeuchi-Sweedler product
\begin{equation*}
\label{petrarca}
   U \! \times_\Aopp \! U   :=
   \big\{ {\textstyle \sum_i} u_i \otimes v_i \in {}_\blact U  \otimes_\Aopp  U_{\!\ract} \mid {\textstyle \sum_i} u_i \ract a \otimes v_i = {\textstyle \sum_i} u_i \otimes a \blact v_i, \ \forall a \in A \big\}.
\end{equation*}
If the left bialgebroid $(U,A)$ is a right Hopf algebroid instead, one analogously obtains:
\begin{eqnarray}
\label{Tch1}
u_{[+]} \otimes_\ahha  u_{[-]} & \in
& U \times^{\scriptscriptstyle A} U,  \\
\label{Tch2}
u_{[+](1)} u_{[-]} \otimes_\ahha u_{[+](2)}  &=& 1 \otimes_\ahha u \quad \in U_{\!\ract} \! \otimes_\ahha \! {}_\lact U,  \\
\label{Tch3}
u_{(2)[-]}u_{(1)} \otimes_\ahha u_{(2)[+]}  &=& 1 \otimes_\ahha u \quad \in U_{\!\bract} \!
\otimes_\ahha \! \due U \lact {},  \\
\label{Tch4}
u_{[+](1)} \otimes_\ahha u_{[-]} \otimes_\ahha u_{[+](2)} &=& u_{(1)[+]} \otimes_\ahha
u_{(1)[-]} \otimes_\ahha  u_{(2)},  \\
\label{Tch5}
u_{[+][+]} \otimes_\ahha  u_{[+][-]} \otimes_\ahha u_{[-]} &=&
u_{[+]} \otimes_\ahha u_{[-](1)} \otimes_\ahha u_{[-](2)},  \\
\label{Tch6}
(uv)_{[+]} \otimes_\ahha (uv)_{[-]} &=& u_{[+]}v_{[+]}
\otimes_\ahha v_{[-]}u_{[-]},  \\
\label{Tch7}
u_{[+]}u_{[-]} &=& t (\varepsilon (u)),  \\
\label{Tch8}
u_{[+]} \bract \varepsilon(u_{[-]})  &=&  u,  \\
\label{Tch9}
(s (a) t (b))_{[+]} \otimes_\ahha (s (a) t (b) )_{[-]}
&=& t(b) \otimes_\ahha t(a),
\end{eqnarray}
where in  \eqref{Tch1}  we mean %the Sweedler-Takeuchi product
\begin{equation*}  \label{petrarca2}
   U \times^{\scriptscriptstyle A} U   :=
   \big\{ {\textstyle \sum_i} u_i \otimes  v_i \in U_{\!\bract}  \otimes_\ahha \!  \due U \lact {} \mid {\textstyle \sum_i} a \lact u_i \otimes v_i = {\textstyle \sum_i} u_i \otimes v_i \bract a,  \ \forall a \in A  \big\}.
\end{equation*}
If the left bialgebroid $(U,A)$ happens to be simultaneously a left and a right Hopf algebroid, it is an easy check that on top the {\em mixed compatibility relations}
\begin{eqnarray}
\label{mampf1}
u_{+[+]} \otimes_\Aopp u_{-} \otimes_\ahha u_{+[-]} &=& u_{[+]+} \otimes_\Aopp u_{[+]-} \otimes_\ahha u_{[-]}, \\
\label{mampf2}
u_+ \otimes_\Aopp u_{-[+]} \otimes_\ahha u_{-[-]} &=& u_{(1)+} \otimes_\Aopp u_{(1)-} \otimes_\ahha u_{(2)}, \\
\label{mampf3}
u_{[+]} \otimes_\ahha u_{[-]+} \otimes_\Aopp u_{[-]-} &=& u_{(2)[+]} \otimes_\ahha u_{(2)[-]} \otimes_\Aopp u_{(1)}
\end{eqnarray}
hold between left and right Hopf structures.

%Let us conclude by remarking that a left bialgebroid which is both left and right Hopf still does not imply the existence of an antipode required in the definition of a (full) Hopf algebroid in \cite{Boe:HA}: for example, the universal enveloping algebra $V\!L$ from \S\ref{examples} in general does not admit an antipode \cite[Prop.~3.11]{KowPos:TCTOHA}.

\providecommand{\bysame}{\leavevmode\hbox to3em{\hrulefill}\thinspace}
\providecommand{\MR}{\relax\ifhmode\unskip\space\fi M`R }
% \MRhref is called by the amsart/book/proc definition of \MR.
\providecommand{\MRhref}[2]{%
  \href{http://www.ams.org/mathscinet-getitem?mr=#1}{#2}}
\providecommand{\href}[2]{#2}


\begin{thebibliography}{10}

\bibitem[ArKe]{ArmKel:DIOTTTCOAA}
M. Armenta and B. Keller, \emph{Derived invariance of the
  {T}amarkin-{T}sygan calculus of an algebra}, C. R. Math. Acad. Sci. Paris
  \textbf{357} (2019), no.~3, 236--240.

  \bibitem[At]{Ati:CACIFB}
M. Atiyah, \emph{Complex analytic connections in fibre bundles}, Trans. Amer. Math. Soc. \textbf{85} (1957), 181--207.
  
   \bibitem[B\"o]{Boe:HA}
 G. B{\"o}hm, \emph{Hopf algebroids}, Handbook of algebra, {V}ol. 6, North-Holland, Amsterdam,
   2009, pp.~173--236.


\bibitem[B\"oBrzWi]{BoeBrzWis:MACOMC}
G. B{\"o}hm, T. Brzezi{\'n}ski, and R. Wisbauer, \emph{Monads
  and comonads on module categories}, J. Algebra \textbf{322} (2009), no.~5,
  1719--1747.


\bibitem[Brz]{Brz:HCHWCC}
T. Brzezi{\'n}ski, \emph{Hopf-cyclic homology with contramodule
  coefficients}, Quantum groups and noncommutative spaces, Aspects Math., E41,
  Vieweg + Teubner, Wiesbaden, 2011, pp.~1--8.


\bibitem[BrzWi]{BrzWis:CAC}
T.~Brzezi{\'n}ski and R.~Wisbauer, \emph{Corings and comodules}, London
  Mathematical Society Lecture Note Series, vol. 309, Cambridge University
  Press, Cambridge, 2003.

\bibitem[Ca]{Cal:FFLA}
D. Calaque, \emph{Formality for {L}ie algebroids}, Comm. Math. Phys.
  \textbf{257} (2005), no.~3, 563--578.
  
\bibitem[CanWe]{CanWei:GMFNCA}
A. Cannas~da Silva and A. Weinstein, \emph{Geometric models for
  noncommutative algebras}, Berkeley Mathematics Lecture Notes, vol.~10,
  American Mathematical Society, Providence, RI, 1999.



\bibitem[C]{Car:LTDUGDLEDUEFP}
H. Cartan, \emph{La transgression dans un groupe de {L}ie et dans un espace
  fibr\'e principal}, Colloque de topologie (espaces fibr\'es), {B}ruxelles,
  1950, Georges Thone, Li\`ege, 1951, pp.~57--71.


\bibitem[CE]{CarEil:HA}
H. Cartan and S. Eilenberg, \emph{Homological algebra}, Princeton
  University Press, Princeton, N. J., 1956.


\bibitem[Co1]{Con:CCEFE}
A.~Connes, \emph{Cohomologie cyclique et foncteurs {${\rm Ext}\sp n$}}, C. R.
  Acad. Sci. Paris S\'er. I Math. \textbf{296} (1983), no.~23, 953--958.



\bibitem[Co2]{Con:NCDG}
\bysame, 
 \emph{Noncommutative differential geometry}, Inst. Hautes \'Etudes
   Sci. Publ. Math. (1985), no.~62, 257--360.

   \bibitem[Do]{Doi:HC}
Y. Doi, 
\emph{Homological coalgebra}, J. Math. Soc. Japan \textbf{33}
  (1981), no.~1, 31--50.

\bibitem[Do]{Dol:CAEFT}
V. Dolgushev, \emph{Covariant and equivariant formality theorems}, Adv. Math. \textbf{191} (2005), no.~1, 147--177.


\bibitem[DoTaTs1]{DolTamTsy:FOTHCAOHC}
V. Dolgushev, D. Tamarkin, and B. Tsygan, \emph{Formality of the homotopy calculus algebra of Hochschild (co)chains}, preprint (2008), {\tt arXiv:08075117}.

\bibitem[DoTaTs2]{DolTamTsy:FTFHCATA}
  \bysame,
  \emph{{Formality
  theorems for Hochschild complexes and their applications.}}, {Lett. Math.
  Phys.} \textbf{90} (2009), no.~1-3, 103--136.


\bibitem[DoTaTs3]{DolTamTsy:NCATGMC}
 \bysame, 
  \emph{Noncommutative
  calculus and the {G}auss-{M}anin connection}, Higher structures in geometry
  and physics, Progr. Math., vol. 287, Birkh\"{a}user/Springer, New York, 2011,
  pp.~139--158. \MR{2762543}


\bibitem[EiMo]{EilMoo:FORHA}
S. Eilenberg and J. Moore, \emph{Foundations of relative homological
  algebra}, Mem. Amer. Math. Soc. No. \textbf{55} (1965).

\bibitem[FiKo]{FioKow:HBOCANCC}
D. Fiorenza and N. Kowalzig,
\emph{Higher brackets on cyclic and negative cyclic (co)homology}, (2017), to appear in Int. Math. Res. Not. (IMRN), {\tt doi:10.1093/imrn/rny241}.

\bibitem[GeDaTs]{GelDalTsy:OAVONCDG}
I.~Gel'fand, Y.~Daletski{\u\i}, and B.~Tsygan, \emph{On a
  variant of noncommutative differential geometry}, Dokl. Akad. Nauk SSSR
  \textbf{308} (1989), no.~6, 1293--1297.

    \bibitem[Ge]{Ger:TCSOAAR}
  M. Gerstenhaber, \emph{The cohomology structure of an associative ring},
    Ann. of Math. (2) \textbf{78} (1963), no.~2, 267--288.

  
\bibitem[He]{Her:HCOKDP}
E. Herscovich, \emph{Hochschild (co)homology of Koszul dual pairs},
J. Noncommut. Geom. \textbf{13} (2019), no.~1, 59--85. 

\bibitem[HKosR]{HocKosRos:DFORAA}
G.~Hochschild, B.~Kostant, and A.~Rosenberg, \emph{Differential
  forms on regular affine algebras}, Trans. Amer. Math. Soc. \textbf{102}
  (1962), 383--408.



\bibitem[Hue]{Hue:PCAQ}
Johannes Huebschmann, \emph{Poisson cohomology and quantization}, J. Reine
  Angew. Math. \textbf{408} (1990), 57--113.

\bibitem[Ka]{Kas:QG}
C.~Kassel, \emph{Quantum groups}, Graduate Texts in Mathematics, vol. 155,
   Springer-Verlag, New York, 1995.

\bibitem[K]{Kon:DQOPM}
M.~Kontsevich, \emph{Deformation quantization of Poisson manifolds}, 
{Lett. Math. Phys.} \textbf{66} (2003), no.~3, 157--216.


\bibitem[KS1]{KonSoi:DOAOOATDC}
M.~Kontsevich and Y.~Soibelman, \emph{Deformations of algebras over operads
  and the {D}eligne conjecture}, Conf\'erence {M}osh\'e {F}lato 1999, {V}ol.
  {I} ({D}ijon), Math. Phys. Stud., vol.~21, Kluwer Acad. Publ., Dordrecht,
  2000, pp.~255--307.


\bibitem[KS2]{KonSoi:NOAA}
\bysame, \emph{Notes on {$A_\infty$}-algebras,
  {$A_\infty$}-categories and non-commutative geometry}, Homological mirror
  symmetry, Lecture Notes in Phys., vol. 757, Springer, Berlin, 2009,
  pp.~153--219.


   
\bibitem[Ko1]{Kow:GABVSOMOO}
 N. Kowalzig, 
 \emph{Gerstenhaber and Batalin-Vilkovisky structures on modules over operads}, Int.\ Math.\ Res.\ Not.\ \textbf{2015} (2015), no.~22, 11694--11744.

\bibitem[Ko2]{Kow:BVASOCAPB}
\bysame, \emph{Batalin-Vilkovisky algebra structures on 
 ${\rm (Co)}\Tor$ and Poisson bialgebroids}, 
J.\ Pure Appl.\ Algebra \textbf{219} (2015), no.~9, 3781--3822.

\bibitem[Ko3]{Kow:WEIABVA}
\bysame, \emph{When $\Ext$ is a Batalin-Vilkovisky algebra}, J. Noncommut. Geom. \textbf{12} (2018), no.~3, 1081--1131.

 \bibitem[KoKr]{KowKra:BVSOEAT}
N.~Kowalzig and U.~Kr\"ahmer,  
 \emph{{B}atalin-{V}ilkovisky structures on $\Ext$ and $\Tor$}, 
 J. Reine Angew. Math. {\bf 697} (2014), 159--219.

%\bibitem[KoKrSl]{KowKraSle:CHAFA}
%  N. Kowalzig, U. Kr\"ahmer, P. Slevin,
%  \emph{Cyclic homology arising from adjunctions}, Theor. and Appl. of Categories \textbf{30} (2015), 1067--1095.


\bibitem[KoPo]{KowPos:TCTOHA}
N.~Kowalzig and H.~Posthuma, \emph{The cyclic theory of {H}opf
  algebroids}, J. Noncomm. Geom. \textbf{5} (2011), no.~3, 423--476.  

\bibitem[La]{Lam:VDBDABVSOCYA}
T. Lambre, \emph{Dualit\'e de van den Bergh 
et structure de Batalin-Vilkoviski\u\i{} sur les alg\`ebres de 
Calabi-Yau}, 
J. Noncommut. Geom. {\textbf 4} (2010), no.~3, 441--457.

\bibitem[Le]{Lei:HOHC}
T.~Leinster, \emph{Higher operads, higher categories}, London
  Mathematical Society Lecture Note Series, vol. 298, Cambridge University
  Press, Cambridge, 2004.

 \bibitem[Lo]{Lod:CH}
 J.-L. Loday, \emph{Cyclic homology}, second ed., Grundlehren Math. Wiss., vol.~301, Springer-Verlag, Berlin, 1998.

  \bibitem[LoVa]{LodVal:AO}
  J.-L. Loday and B. Vallette, \emph{Algebraic operads}, 
  Grundlehren Math. Wiss., vol.~346, Springer-Verlag, Berlin, 2012
 
   \bibitem[McCSm]{McCSmi:ASODHCC}
  J. McClure and J. Smith, \emph{A solution of {D}eligne's
    {H}ochschild cohomology conjecture}, Recent progress in homotopy theory
    ({B}altimore, {MD}, 2000), Contemp. Math., vol. 293, Amer. Math. Soc.,
    Providence, RI, 2002, pp.~153--193.

\bibitem[NeTs]{NesTsy:OTCROAA}
R. Nest and B. Tsygan, \emph{On the cohomology ring of an algebra},
  Advances in geometry, Progr. Math., vol. 172, Birkh\"auser Boston, Boston,
  MA, 1999, pp.~337--370.

\bibitem[Po1]{Pos:HAOSAS}
L. Positselski, \emph{Homological algebra of semimodules and
  semicontramodules}, Instytut Matematyczny Polskiej Akademii Nauk. Monografie
  Matematyczne (New Series), vol.~70,
  Birkh\"auser/Springer Basel AG, Basel, 2010.

  \bibitem[Po2]{Pos:C}
    \bysame, \emph{Contramodules}, preprint (2015), {\tt arXiv:1503.00991}.

    

\bibitem[Pr]{Pra:TDLPLGD}
  J. Pradines, \emph{Th\'eorie de Lie pour les groupo\"ides diff\'erentiables. Calcul diff\'erentiel dans la cat\'egorie des groupo\"ides infinit\'esimaux}, C.\ R.\ Acad.\ Sci.\ Paris S\'er.\ A-B \textbf{264}
  (1967), A245--A248.

\bibitem[Ra]{Rav:CCASHGOS}
D. Ravenel, 
\emph{Complex cobordism and stable homotopy groups of
  spheres}, Pure and Applied Mathematics, vol. 121, Academic Press Inc.,
  Orlando, FL, 1986.
  
\bibitem[Ri]{Rin:DFOGCA}
G. Rinehart, \emph{Differential forms on general commutative algebras},
Trans. Amer. Math. Soc. \textbf{108} (1963), 195--222.
  
\bibitem[Sch]{Schau:DADOQGHA}
P. Schauenburg, \emph{Duals and doubles of quantum groupoids ({$\times\sb
    R$}-{H}opf algebras)}, New trends in Hopf algebra theory (La Falda, 1999),
    Contemp. Math., vol. 267, Amer. Math. Soc., Providence, RI, 2000,
    pp.~273--299.

\bibitem[Tak]{Tak:GOAOAA}
  M. Takeuchi, \emph{Groups of algebras over {$A\otimes \overline A$}}, J.
    Math. Soc. Japan \textbf{29} (1977), no.~3, 459--492.

\bibitem[TaTs]{TamTsy:NCDCHBVAAFC}
 D.~Tamarkin and B.~Tsygan, \emph{Noncommutative differential
    calculus, homotopy {BV} algebras and formality conjectures}, Methods Funct.
    Anal. Topology \textbf{6} (2000), no.~2, 85--100.

  \bibitem[Tam]{Tam:TTTCOAAALS}
    P. Tamaroff,  \emph{The Tamarkin-Tsygan calculus of an algebra \`a la Stasheff},
   preprint (2019), {\tt arXiv:1907.08888}, to appear in Homology, Homotopy and Applications. 

    
\bibitem[Ts1]{Tsy:CH}
B. Tsygan, \emph{Cyclic homology}, Cyclic homology in non-commutative geometry,
  Encyclopaedia Math. Sci., vol. 121, Springer, Berlin, 2004, pp.~73--113.

\bibitem[Ts2]{Tsy:NCAO}
\bysame, \emph{Noncommutative calculus and operads}, Topics in
  noncommutative geometry, Clay Math. Proc., vol.~16, Amer. Math. Soc.,
  Providence, RI, 2012, pp.~19--66.
  
%% \bibitem[We]{Wei:AITHA}
%% C. Weibel, \emph{An introduction to homological algebra}, Cambridge
%%   Studies in Advanced Mathematics, vol.~38, Cambridge University Press,
%%   Cambridge, 1994.
  
\end{thebibliography}
\end{document}